\newtheorem{thm}{Theorem}[section]
\newtheorem*{thm*}{Main Theorem}
\newtheorem{lem}[thm]{Lemma}
\newtheorem{prop}[thm]{Proposition}
\newtheorem{defn}[thm]{Definition}
\newtheorem*{conjecture*}{Question JLS}
\newtheorem*{subquestion*}{Subquestion JLS}
\newtheorem*{observation*}{Observation}
\newtheorem*{question*}{Question JLS$^\prime$}
\newtheorem{remark}[thm]{Remark}
\newcommand{\FZ}{\mathbb{Z}}  
\newcommand{\FC}{\mathbb{C}}  
\newcommand{\FR}{\mathbb{R}}  
\newcommand{\FQ}{\mathbb{Q}}  
\newcommand{\Pmod}[1]{\left({\rm mod\,} #1\right)}
\newcommand{\chif}{\chi_{\rm f}}
\newcommand{\fa}{\mathfrak{a}}
\newcommand{\fm}{\mathfrak{m}}
\newcommand{\lc}{\Big{(}}
\newcommand{\rc}{\Big{)}}
\newcommand{\Gal}{\operatorname{Gal}}
\newcommand{\be}{\mathbf{e}}
\DeclareMathOperator{\Tr}{Tr}
\title[Hecke {\it L}-values with cyclotomic twists]{Cyclotomic fields are generated by cyclotomic Hecke {\it L}-values of totally real fields, II}
\author{Jaesung kwon}
\author{Hae-Sang Sun}
\email{jaesungkwon@snu.ac.kr}
\email{haesang@unist.ac.kr}
\date{\today}
\begin{document}

\begin{abstract}
Jun-Lee-Sun \cite{JLS} posed the question of whether the cyclotomic Hecke field can be generated by a single critical $L$-value of a cyclotomic Hecke character over a totally real field. They provided an answer to this question in the case where the tame Hecke character is trivial. In this paper, we extend their work to address the case of non-trivial Hecke characters over solvable totally real number fields. Our approach builds upon the primary estimation obtained by Jun-Lee-Sun \cite{JLS}, supplemented with new inputs, including global class field theory, duality principles, the analytic behavior of partial Hecke $L$-functions, and the non-vanishing of twisted Gauss sums and Hyper Kloosterman sums.
\end{abstract}

\subjclass[2020]{11R42 11R80 11R23}
\keywords{totally real field, cyclotomic character, Hecke {\it L}-function, Hecke field}
\maketitle
\setcounter{tocdepth}{1}
\tableofcontents

\section{Introduction}
Various aspects of special $L$-values of automorphic forms on GL$(n)$ with cyclotomic $\mathbb{Z}_p$-twist have been widely studied. One notable example is the {\it Hecke field generation problem}.
For instance, Luo-Ramakrishnan \cite{luo1997determination} proved that the cyclotomic Hecke field of modular forms is generated by the $p$-power roots of unity and critical modular $L$-values with cyclotomic character twists.
Sun \cite{sun} demonstrated that the cyclotomic Hecke field of modular forms is generated by a single critical modular $L$-value twisted by a cyclotomic character, applying this result to the arithmetic of $L$-values for Hida families. 

Instead of modular forms, one can consider GL(1) automorphic forms over a totally real field $K$, specifically ray class characters over $K$. Let $p$ be a rational prime unramified in $K$. 
A ray class character $\chi$ over $K$ is said to be a {\it cyclotomic character} over $K$ modulo a $p$-power if 
$$
\chi=\psi\circ N_{K/\mathbb{Q}}
$$ 
for some primitive Dirichlet character $\psi$ modulo a $p$-power. Note that such characters form the dual of the Galois group of the {\it cyclotomic $\mathbb{Z}_p$-extension} of $K$, and under the Leopoldt's conjecture, the $\mathbb{Z}_p$-extension of $K$ is unique as $K$ is totally real.

Let $\eta$ be a totally odd primitive ray class character over $K$ modulo $
\mathfrak{m}$. Let us set
$
F:=\mathbb{Q}(\mu_{p^{n_0}(p-1)}),
$ 
where $\mu_M\subset\mathbb{C}$ is the set of $M$-th roots of unity, and $n_0>1$ is the integer such that
$$
\mathbb{Q}(\eta)\cap\mathbb{Q}(\mu_{p^2})=\mathbb{Q}(\mu_{p^{n_0}}).
$$
Let $L_K(0,\xi)$ be a critical $L$-value of a ray class character $\xi$. 
Then, Jun-Lee-Sun \cite{JLS} suggested a question motivated by the results on modular forms, such as those by Luo-Ramakrishnan \cite{luo1997determination} and Sun \cite{sun}, as follows:
\begin{conjecture*} For almost all totally even cyclotomic characters $\chi$ over $K$ modulo a $p$-power, does the following equality hold?
$$
F\big(L_K(0,\eta\chi)\big)=F(\eta,\chi).
$$ 
\end{conjecture*}
Jun-Lee-Sun \cite{JLS} provided an answer to this question when the tame character is trivial.
To solve this problem, they first show that 
$$
\text{``average'' of }L_K(0,\chi)=1+o(1)
$$
as the exponent of the conductor of $\chi$ goes to infinity.
The main ingredients are given as follows: 
\begin{itemize}
\item[(1)] non-singular cone decomposition of the fundamental domain of the real Minkowski space of $K$ quotiented by the action of the group of global units of $K$,
\item[(2)] the square root cancellation on the exponential sum on the unit group.
\end{itemize} 
This method can also be applied to the average of critical $L$-values for non-trivial $\eta$ with minor modifications:
\begin{thm}[Theorem \ref{theorem2}]\label{intro:theorem2}
Let $\chi$ be a totally even primitive cyclotomic character over $K$ modulo a $p$-power. For a positive integer $m$, an estimation on an ``$m$-average'' of $L_K(0,\eta\chi)$ is given by
$$
\text{``$m$-average'' of }L_K(0,\eta\chi)=\sum_{N_{K/\mathbb{Q}}(\mathfrak{a})=m}\eta(\mathfrak{a})+o(1)
$$
as the exponent of $\chi$ goes to infinity, where $\mathfrak{a}$ runs through the integral ideals of $K$ with given norm $m$.
\end{thm}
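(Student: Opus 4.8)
The plan is to adapt the Jun--Lee--Sun argument, carrying the fixed tame character $\eta$ as a unimodular weight at every stage. I would begin by invoking the non-singular Shintani cone decomposition (ingredient (1)) of a fundamental domain for the totally positive cone of the Minkowski space of $K$ modulo the action of the totally positive units. This expresses each partial Hecke $L$-function, and hence $L_K(0,\eta\chi)$ after summing over narrow ray classes, as a combination of sums over lattice points $v$ in the cones, each point weighted by $\eta((v))\,\chi((v))$ together with the Bernoulli-type factor produced by evaluating the Shintani cone zeta at $s=0$. The only change from the trivial-tame case is the extra factor $\eta((v))$, which is bounded and depends on $v$ only through the narrow ray class of $(v)$ modulo $\mathfrak{m}$.

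Next I would exploit the cyclotomic shape $\chi=\psi\circ N_{K/\mathbb{Q}}$, so that $\chi((v))=\psi(N_{K/\mathbb{Q}}(v))$ depends on $v$ only through its norm. Defining the $m$-average as the orthogonality average $\tfrac{1}{\phi(p^{n})}\sum_{\psi}\overline\psi(m)\,L_K(0,\eta\chi_\psi)$ over Dirichlet characters $\psi$ modulo $p^{n}$, character orthogonality collapses the $\psi$-sum to the single arithmetic condition $N_{K/\mathbb{Q}}(v)\equiv m\pmod{p^{n}}$ with $(m,p)=1$. The terms with $N((v))=m$ reassemble, upon matching lattice points in the cones with integral ideals, into the asserted main term $\sum_{N_{K/\mathbb{Q}}(\mathfrak{a})=m}\eta(\mathfrak{a})$; the detailed verification that this diagonal contributes exactly the $m$-th Dirichlet coefficient is inherited from the $\eta=1$ computation, the factor $\eta(\mathfrak{a})$ simply being carried along.

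The remaining contribution comes from the off-diagonal lattice points, namely those with $N((v))\equiv m\pmod{p^{n}}$ but $N((v))>m$, the smallest admissible norm being $m+p^{n}$. After collecting the unit-translates within each cone, this reduces to an $\eta$-twisted exponential sum over the unit group, which I would bound by the square-root cancellation estimate (ingredient (2)): the number of lattice points up to a given height grows only polynomially, while the oscillation of the norm form over the units supplies square-root savings, so that the entire off-diagonal is $o(1)$ as $n\to\infty$.

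The main obstacle is precisely this last step---verifying that the square-root cancellation survives the insertion of $\eta$. Since $\eta$ is fixed and factors through the finite narrow ray-class group modulo $\mathfrak{m}$, it contributes a bounded character that is constant on the fibers over which the unit group averages; one can therefore decouple it from the $p$-power oscillation and run the Jun--Lee--Sun cancellation verbatim on each $\eta$-coset, which is the sense in which the modification is ``minor.'' The delicate points are maintaining uniformity in the residue $m$ and in the exponent $n$, and ensuring that the gap of size $p^{n}$ between the diagonal norm $m$ and the nearest off-diagonal norm $m+p^{n}$ is genuinely exploited, so that the error estimate truly tends to zero as the conductor grows.
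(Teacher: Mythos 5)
Your route is not the paper's and, as written, it has a genuine gap. The paper proves Theorem \ref{theorem2} without ever expanding the $L$-value at $s=0$: it uses the functional equation (\ref{algebraicity}) to replace $L_K(0,\overline{\eta\chi})$ by $\frac{\sqrt{d_K}}{(\pi i)^d}\tau(\overline{\eta\chi})L_K(1,\eta\chi)$, then the reciprocity law (Proposition \ref{reciprocity}) together with the approximate functional equation of Jun--Lee--Sun, so that the Galois-trace average over $\delta\equiv -b_\chi\ (p^{n_0})$ becomes two sums weighted by the averaged Gauss sums $\tau_{\rm av}(\chi,\fa\mathfrak{r}^{-1})$ and $\widetilde{\tau}_{\rm av}(\chi,\fa\mathfrak{r})$ and by rapidly decaying smooth functions $F_1,F_2$; the main term $\frac{\sqrt{d_K}}{(\pi i)^d N(\mathfrak{r})}c_\eta\big(N(\mathfrak{r})\big)$ is produced by the concentration of $\tau_{\rm av}$, and the cone decomposition and the square-root cancellation over units enter only inside JLS's estimates of the partial Gauss sum $\tau_1$, which carry over verbatim because $|\eta|\leq 1$.

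The gap in your argument is the diagonal/off-diagonal step. At $s=0$ the Shintani evaluation of $L_K(0,\eta\chi)$ is a finite sum of Bernoulli-polynomial weights attached to fractional points of fundamental parallelepipeds (with denominators growing with $p^n$); it is \emph{not} a sum over integral ideals indexed by their norms with weight $1$. So after orthogonality in $\psi$ there is no reason, and no computation in JLS to ``inherit,'' for the terms you call diagonal to reassemble into $\sum_{N(\fa)=m}\eta(\fa)$: identifying norm-$m$ terms with the $m$-th Dirichlet coefficient is legitimate only in the region of absolute convergence, and JLS never perform such a computation at $s=0$. Likewise your off-diagonal bound has no source of smallness: at $s=0$ nothing decays in the norm, so the gap $p^n$ to the nearest admissible off-diagonal norm buys nothing by itself; in the actual proof the decay is supplied by the smooth weights $F_1,F_2$ of the approximate functional equation and the cancellation by the structure of $\tau_{\rm av}$, $\widetilde{\tau}_{\rm av}$, while the unit-group square-root cancellation is spent on estimating partial Gauss sums, not norm tails. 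A further, more forgivable, mismatch: the paper's $m$-average keeps a single character $\chi$ and averages its Galois conjugates against $\be(\Tr(\delta)/p^n)$ and $\chi(\delta\mathfrak{r}^{-1})$, whereas your average over all Dirichlet characters $\psi$ modulo $p^n$ mixes distinct $L$-values; the latter could not feed the later arguments (Propositions \ref{hecke:field:generation} and \ref{prim}), where non-vanishing of the average must constrain the field generated by one $L$-value.
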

Let us define the {\it norm average function} $c_\eta:\mathbb{Z}_{>0}\to\mathbb{Z}[\eta]$ attached to $\eta$ as follows:
$$
c_\eta(m):=\sum_{N_{K/\mathbb{Q}}(\mathfrak{a})=m}\eta(\mathfrak{a}).
$$
By applying the estimation provided in Theorem \ref{intro:theorem2}, we derived the following result:
\begin{prop}[Proposition \ref{hecke:field:generation} and Proposition \ref{cyclotomic:generation}]\label{intro:hecke:field:generation}
\begin{itemize}
\item[(1)] For almost all totally even primitive cyclotomic characters $\chi$
 over $K$ modulo a $p$-power,
$$
F(L_K(0,\eta\chi),\chi)=F(c_\eta^\prime,\chi).
$$
\item[(2)] If $c_\eta(m)\neq 0$ for some $m>0$ with $m^{p-1}\not\equiv 1\ (p^2)$, then $F(\chi)\subset F\big(L_K(0,\eta\chi)\big)$.
\end{itemize}
\end{prop}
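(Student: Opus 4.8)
The plan is to place both statements inside the abelian extension $F(\eta,\chi)/F$ and read them off from the Galois correspondence, handling the ``$\eta$-direction'' by the averaging of Theorem~\ref{intro:theorem2} and the ``wild $\chi$-direction'' by an exact non-vanishing of character sums. Throughout I write $L=L_K(0,\eta\chi)$ and use the reciprocity $\sigma\big(L_K(0,\eta\chi)\big)=L_K\big(0,\eta^{\sigma}\chi^{\sigma}\big)$ for the critical value (valid since $\eta$ is totally odd and $\chi$ totally even, so $\eta\chi$ is totally odd and the value is algebraic by Siegel--Klingen). In particular $L\in F(\eta,\chi)$, and $F(\eta,\chi)/F$ is abelian, being a compositum of abelian extensions of $\mathbb{Q}$; this is what makes the fixed-field calculus below legitimate.

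For part (1) I would work in $G:=\Gal\big(F(\eta,\chi)/F(\chi)\big)$, which fixes $\chi$ and acts on $\eta$ by $\eta\mapsto\eta^{s(h)}$ for $h\in G$. Since each $c_\eta(m)$ and $L$ lie in $F(\eta,\chi)$, it suffices to show that the subgroup of $G$ fixing $L$ equals the subgroup fixing every value $c_\eta(m)$ (equivalently, fixing $F(c_\eta^\prime,\chi)$). For the $c_\eta$-values, $h$ fixes them all exactly when $c_{\eta^{s(h)}}=c_\eta$ as functions, i.e. when $\eta^{s(h)}=\eta$: indeed $\sum_m c_\eta(m)m^{-s}=L_K(s,\eta)$, and distinct primitive ray class characters have distinct Hecke $L$-functions, so the norm-average function determines $\eta$. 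For $L$, the implication $\eta^{s(h)}=\eta\Rightarrow h(L)=L$ is immediate; conversely, if $\eta^{s(h)}\neq\eta$ then $c_{\eta^{s(h)}}(m_1)\neq c_\eta(m_1)$ for some $m_1$, and applying Theorem~\ref{intro:theorem2} to the conjugate character $\eta^{s(h)}$ as well as to $\eta$ gives
$$
\text{``$m_1$-average'' of }L_K(0,\eta^{s(h)}\chi)=c_{\eta^{s(h)}}(m_1)+o(1),\qquad \text{``$m_1$-average'' of }L=c_\eta(m_1)+o(1),
$$
so for all $\chi$ of sufficiently large exponent these two averages differ, whence $h(L)=L_K(0,\eta^{s(h)}\chi)\neq L$. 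Thus the two fixing subgroups coincide for almost all $\chi$, and the Galois correspondence yields $F(L,\chi)=F(c_\eta^\prime,\chi)$.

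For part (2) I would instead pass to $\widetilde G:=\Gal\big(F(\eta,\chi)/F\big)$ and show that every $\gamma\in\widetilde G$ with $\gamma(L)=L$ acts trivially on the wild part of $\chi$, which by the correspondence is exactly $F(\chi)\subseteq F(L)$. Writing $\gamma:\eta\mapsto\eta^{a},\ \chi\mapsto\chi^{b}$, the hypothesis enters through the Gauss-sum expansion of the critical value furnished by the functional equation of the partial Hecke $L$-function: $L_K(0,\eta\chi)$ carries a factor $\tau(\psi\circ N_{K/\mathbb{Q}})$, which decomposes into hyper-Kloosterman sums, and the contribution indexed by $m_0$ is controlled by $c_\eta(m_0)$ together with $\psi$ evaluated on the class of $m_0$. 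The condition $m_0^{p-1}\not\equiv 1\ (\mathrm{mod}\ p^2)$ says precisely that $m_0$ has nontrivial image in the wild quotient $(1+p\mathbb{Z})/(1+p^2\mathbb{Z})$, so a wild-nontrivial $\gamma$ (that is, $b\not\equiv 1$ on the $p$-part) genuinely alters $\psi(m_0)$; since $c_\eta(m_0)\neq 0$, this term is present and the change in the Gauss-sum factor cannot be absorbed by the separate action of $a$ on $\eta$. Hence $\gamma(L)=L$ with $b$ wild-nontrivial would force the vanishing of a twisted Gauss / hyper-Kloosterman sum attached to $\eta$ and $m_0$, which is excluded by the non-vanishing results cited in the abstract; therefore $\gamma$ fixes $\chi$ and $F(\chi)\subseteq F(L)$.

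I expect (1) to be the soft part: granting Theorem~\ref{intro:theorem2} and the injectivity of $\eta\mapsto(c_\eta(m))_m$, it is formal Galois theory, the only cost being the passage from the asymptotic identity to an exact separation of critical values, which is why it holds only for almost all $\chi$. The genuine difficulty lies in (2), where I would need to (a) extract from the critical value the precise Gauss-sum/hyper-Kloosterman factor on which the wild Galois action is visible, uniformly in the exponent of $\chi$, and (b) prove the relevant twisted sum does not vanish, so that the conclusion is unconditional in $\chi$ rather than generic. Step (b), together with pinning down that $m_0^{p-1}\not\equiv 1\ (\mathrm{mod}\ p^2)$ is exactly the condition making $m_0$ detect the wild part, is where the main work will be; global class field theory and the character/class-group duality should organize this reduction.
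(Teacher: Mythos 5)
Your part (1) follows the same skeleton as the paper's Proposition \ref{hecke:field:generation} (reciprocity, Galois correspondence in the abelian extension $F(\eta,\chi)/F(\chi)$, and the averaging Theorem \ref{intro:theorem2}), and your quantifier structure --- running the dichotomy over the finitely many restrictions $s(h)$ and letting the asymptotic eliminate the bad $h$ for all $\chi$ of large exponent --- is arguably cleaner than the paper's pigeonhole over normal subgroups $H$ and infinite sets $S_H$. However, it contains one false step: ``distinct primitive ray class characters have distinct Hecke $L$-functions, so the norm-average function determines $\eta$.'' That assertion is precisely Subquestion JLS, which the paper poses as \emph{open}, and it is false in general: when $K/\mathbb{Q}$ is Galois (the solvable setting of the main theorems) and $g\in\Gal(K/\mathbb{Q})$, the character $\eta\circ g$ satisfies $c_{\eta\circ g}=c_\eta$, since $g$ permutes the ideals of any given norm, yet $\eta\circ g\neq\eta$ unless $\eta$ is Galois-invariant; equality of the integer-indexed Dirichlet series $\sum_m c_\eta(m)m^{-s}$ does not recover the Euler factors at primes of $K$. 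Fortunately your argument never needs this claim: replace the dichotomy ``$\eta^{s(h)}=\eta$ or not'' by ``$c^\prime_{\eta^{s(h)}}=c^\prime_\eta$ or not.'' In the first case $L_K(s,\eta^{s(h)}\chi)=L_K(s,\eta\chi)$ as Dirichlet series, so $h(L)=L$; in the second, Theorem \ref{intro:theorem2} applied to $\eta$ and to $\eta^{s(h)}$ separates the two averages for large exponent, and since $L_K(0,\overline{\eta^{s(h)}\chi})=h\big(L_K(0,\overline{\eta\chi})\big)$ with $h$ fixing $F(\chi)$ and commuting with complex conjugation (all fields here are abelian over $\mathbb{Q}$), the equality $h(L)=L$ would force the two averages to coincide term by term; hence $h(L)\neq L$. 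With that substitution, (1) is correct and matches the paper.

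Part (2) has a genuine gap. You propose to read the wild Galois action off ``the contribution indexed by $m_0$'' inside a Gauss-sum/hyper-Kloosterman expansion of the single number $L_K(0,\eta\chi)$. No such expansion exists: the functional equation only gives $L_K(0,\overline{\eta\chi})=\frac{\sqrt{d_K}}{(\pi i)^d}\tau(\overline{\eta\chi})L_K(1,\eta\chi)$, and from one complex number you cannot isolate the coefficient $c_\eta(m_0)$ --- that isolation is exactly what the Galois averaging of Theorem \ref{intro:theorem2} performs, taking $\mathfrak{r}$ of norm $m_0$. Moreover, the input you invoke (non-vanishing of twisted Gauss/hyper-Kloosterman sums) is misplaced here: in statement (2) the hypothesis $c_\eta(m_0)\neq0$ for a primitive $m_0$ is precisely what that machinery is used to establish elsewhere in the paper (Lemma \ref{nonvanishing:primitive}, via Propositions \ref{twisted:average:gausssum} and \ref{nonvanishing:averagesum:characters}); once it is assumed, no Kloosterman input is needed at all. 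The paper's actual route (Propositions \ref{prim} and \ref{cyclotomic:generation}, both again for almost all $\chi$, not all $\chi$) is: (i) from the hypothesis choose a primitive ideal $\mathfrak{r}_1$ (equal to $O_K$ or to $b_\chi\mathfrak{r}_0^{-1}$ according as $b_\chi$ is primitive or not) so that the main term of the average $A_\chi(\mathfrak{r})=\Tr_{F(\eta,\chi)/F(\eta)}\big(\chi(\mathfrak{r})L_K(0,\overline{\eta\chi})\big)$ is a nonzero multiple of $c_\eta\big(N(\mathfrak{r}_1)\big)$, whence $A_\chi(\mathfrak{r})\neq0$ for almost all $\chi$; (ii) if $F(\chi)\not\subseteq F(L)(\eta)$, factor this trace through $F(L)(\eta)$ and use that the trace of $\chi(\mathfrak{r})$ --- a root of unity generating $F(\chi)$ precisely because $\mathfrak{r}$ is primitive --- to any proper subfield of $F(\chi)$ vanishes, forcing $A_\chi(\mathfrak{r})=0$, a contradiction; (iii) descend from $F(L)(\eta)$ to $F(L)$ using $F(\eta)=F(\mu_M)$ with $p\nmid M$. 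Your instinct that the condition $m_0^{p-1}\not\equiv 1\ (p^2)$ is what makes the wild part visible is correct, but it enters through this trace-vanishing of a full-order root of unity after averaging, not through any term-by-term structure of a single critical value; without the averaging step and the trace argument, your sketch for (2) cannot be completed.
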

Therefore, to answer Question JLS, we have to give an answer of the following subquestion: 
\begin{subquestion*}
Does $c_\eta$ determine $\eta$?
\end{subquestion*}
Unfortunately, the norm average $c_\eta$ is difficult to analyze, as $\eta$ cannot generally be factored through the norm map $N_{K/\mathbb{Q}}$.
Hence, we need some new inputs, which will be described in the following subsections.

\subsection{Partial Hecke $L$-function}
Instead of computing the norm average directly, we take an alternative approach. 
Let us define a {\it partial Hecke $L$-function} as follows:
$$
L_K^\circ(s,\eta):=\sum_{m^{p-1}\not\equiv 1\ (p^2)}\sum_{N_{K/\mathbb{Q}}(\mathfrak{a})=m}\frac{\eta(\mathfrak{a})}{N(\mathfrak{a})^s}=\sum_{m^{p-1}\not\equiv 1\ (p^2)}\frac{c_\eta(m)}{m^s},
$$
where $m$ runs through the positive integers such that $m^{p-1}\not\equiv 1\ (p^2)$.
Let us denote by $c_\eta^\circ$ the restriction of $c_\eta$ to the set of positive integers $m>0$ satisfying $m^{p-1}\not\equiv 1\ (p^2)$.
Let us present our critical observation:
\begin{observation*}
$c_\eta^\circ$ determines at least ``the restriction'' of $\eta_{\rm f}$ 
if the ``twisted average of Gauss sums'' is non-vanishing. 
\end{observation*}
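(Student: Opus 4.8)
The plan is to reconstruct $\eta_{\rm f}$ from the single arithmetic function $c_\eta^\circ$ in three stages: first extract purely multiplicative (symmetric) information from the norm averages, then transport it to the ray class group via global class field theory, and finally use the twisted Gauss sums to remove the residual Galois symmetry. Knowing $c_\eta^\circ$ is the same as knowing the Dirichlet coefficients of $L_K^\circ(s,\eta)$, so I will pass between the two descriptions freely.

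First I would exploit multiplicativity. Since $N_{K/\mathbb{Q}}$ is multiplicative and $\eta$ is a character on ideals, $c_\eta$ is a multiplicative arithmetic function, and the admissible set $S=\{m:m^{p-1}\not\equiv 1\ (p^2)\}$ is stable under taking $k$-th powers whenever $p\nmid k$: under the Fermat-quotient identification the map $m\mapsto m^{p-1}$ induces a homomorphism $(\mathbb{Z}/p^2)^\times\to\mathbb{Z}/p$, so $q_p(m^k)\equiv k\,q_p(m)\ (p)$. Hence for a rational prime $\ell$ that splits completely in the ray class field with $\ell\in S$, all values $c_\eta(\ell^k)$ with $p\nmid k$ are available, and they are the power sums of the finite multiset $\{\eta(\mathfrak p):\mathfrak p\mid \ell,\ f(\mathfrak p)=1\}$. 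As the $\eta(\mathfrak p)$ are roots of unity of bounded order, these power sums determine the multiset itself (Newton's identities, after interpolating across the progression $p\nmid k$). Thus $c_\eta^\circ$ already pins down, for almost all split $\ell$, the unordered collection of local values of $\eta$ over the primes above $\ell$.

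Next, via class field theory and the duality between ray class characters and Frobenius elements, I would reinterpret this multiset datum as the restriction of $\eta_{\rm f}$ to the subgroup of the ray class group generated by the admissible completely split primes, up to the action of $\Gal(K/\mathbb{Q})$ permuting the primes in each norm fibre; it is here that solvability of $K/\mathbb{Q}$ enters, letting one descend the tower and reduce the separation problem to cyclic layers. At the analytic level, orthogonality gives $\mathbf 1_S=1-\tfrac1p\sum_{j=0}^{p-1}\psi_j$ with $\psi_j$ the order-dividing-$p$ Dirichlet characters modulo $p^2$, so that $L_K^\circ(s,\eta)=\tfrac{p-1}{p}L_K(s,\eta)-\tfrac1p\sum_{j=1}^{p-1}L_K(s,\eta\,(\psi_j\circ N_{K/\mathbb{Q}}))$ is a fixed linear combination of genuine cyclotomic twists of the Hecke $L$-function of $\eta$.

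The hard part, and the final stage, is to break the residual Galois symmetry and recover $\eta_{\rm f}$ rather than merely its norm-symmetric shadow. Applying the functional equations of the $L_K(s,\eta\,(\psi_j\circ N_{K/\mathbb{Q}}))$ to the identity above, the dual side produces a combination of the conjugate $L$-functions weighted by the root numbers, i.e. by the normalized Gauss sums $\tau(\eta\,(\psi_j\circ N_{K/\mathbb{Q}}))$; comparing with the known coefficients of $L_K^\circ(s,\eta)$ and using linear independence of $L$-functions of distinct primitive Hecke characters extracts precisely the twisted average of Gauss sums $\sum_j a_j\,\tau(\eta\,(\psi_j\circ N_{K/\mathbb{Q}}))$. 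Under the standing non-vanishing hypothesis this average is nonzero, and factoring each Gauss sum into its local constituents shows that its value detects the individual local components of $\eta_{\rm f}$, not merely their product over a fibre, thereby determining the asserted restriction. I expect the main obstacle to lie exactly here: norm data is intrinsically Galois-symmetric, so the entire force of the argument must come from the additive information carried by the Gauss sums, and the crux is to verify that the extracted twisted average is genuinely non-degenerate, a point ultimately reduced, in the sequel, to the non-vanishing of twisted Gauss sums and hyper-Kloosterman sums.
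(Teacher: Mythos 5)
Your proposal assembles two of the paper's genuine ingredients --- the orthogonality identity expressing $L_K^\circ$ as $\tfrac{p-1}{p}L_K(s,\eta\cdot(\omega^j\circ N))-\tfrac1p\sum_i L_K(s,\eta\cdot(\psi^i\omega^j\circ N))$, and the use of the functional equation to surface Gauss sums --- but the two steps that actually produce ``the restriction of $\eta_{\rm f}$'' are respectively impossible and absent. The impossible one is your last claim: since every $g\in\Gal(K/\mathbb{Q})$ permutes the ideals of a given norm, $c_{\eta\circ g}^\circ=c_\eta^\circ$, so no argument starting from $c_\eta^\circ$ can ``break the residual Galois symmetry'' or detect ``individual local components'' of $\eta_{\rm f}$. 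What the Observation's ``restriction'' means (see Theorem \ref{intro:main:result}) is the restriction of $\eta_{\rm f}$ to odd rational primes $q\nmid pN(\mathfrak{m})$, and for such $q$ one has $\eta_{\rm f}(q)=\eta(qO_K)=\prod_{\mathfrak{p}\mid q}\eta(\mathfrak{p})^{e_\mathfrak{p}}$ --- precisely the Galois-invariant fibre product that you dismiss as the ``shadow.'' The same invariance shows that your key tool, ``linear independence of $L$-functions of distinct primitive Hecke characters,'' is false over $K$: $L_K(s,\xi)=L_K(s,\xi\circ g)$ identically for $\xi\neq\xi\circ g$. The paper never uses it; it extracts the Gauss sums by letting $s\to-\infty$, where $L_K(1-s,\cdot)=1+o(1)$ and the distinct conductor powers $p^{ds}$, $p^{2ds}$ (together with the Euler factor $L_p(s,\eta)$ in the imprimitive case) separate the terms --- these $s$-dependent coefficients are also why a constant-coefficient independence argument could not be applied as stated.

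The absent step is the paper's auxiliary-twist-and-ratio mechanism, which is the entire engine of Proposition \ref{nonvanishing:averagesum:characters} and Theorem \ref{nonvanishing:averagesum:characters:2}. Even granting your extraction, the coprime-conductor factorization $\tau(\eta\chi^i\varpi^j)=\tau(\eta)\tau(\chi^i\varpi^j)\eta_{\rm f}(p^2)\,\chi^i\varpi^j(\mathfrak{m})$ (Lemma \ref{gausssum:decomposition}) plus the non-vanishing hypothesis yields exactly one number, $\tau(\eta)\eta_{\rm f}(p^2)$, and one complex number cannot pin down $\eta_{\rm f}$ at infinitely many primes. The paper gets the rest by noting that for each odd prime $q\nmid pN(\mathfrak{m})$ and a nontrivial even Dirichlet character $\omega_q$ modulo $q$, the twist $\eta\cdot(\omega_q\circ N)$ satisfies $c_{\eta\cdot(\omega_q\circ N)}(m)=\omega_q(m)c_\eta(m)$, so the vanishing hypothesis on coefficients is preserved; rerunning the argument gives $\tau(\eta)\eta_{\rm f}(p^2q)$, and the ratio against $\tau(\eta)\eta_{\rm f}(p^2)$ recovers $\eta_{\rm f}(q)$ prime by prime (equivalently, any $\sigma$ fixing $\mathbb{Q}(c_\eta^\circ)$ fixes every $\eta_{\rm f}(q)$, giving $\mathbb{Q}(\eta_{\rm f}')\subset\mathbb{Q}(c_\eta^\circ)$). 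Your stages on split primes and Newton's identities feed into none of this and can be discarded; note also that for split $\ell$ the quantity $c_\eta(\ell^k)$ is a complete homogeneous symmetric function of the $\eta(\mathfrak{p})$, not a power sum, and that solvability is not needed for ``descending the tower'' but only (elsewhere in the paper) to prove the non-vanishing hypothesis itself via the decomposition $\tau(\phi\circ N)=C\phi(d_K)\tau(\phi)^{d}$ and hyper-Kloosterman sums.
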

Let us describe this in detail. Let $\eta_1$ and $\eta_2$ be ray class characters over $K$ modulo $\mathfrak{m}$.
Note that $c_{\eta_1}^\circ=c_{\eta_2}^\circ$ if and only if
\begin{equation}\label{intro:partial:hecke:equation}
L_K^\circ\big(s,\eta_1\cdot(\omega^j\circ N_{K/\mathbb{Q}})\big)=L_K^\circ\big(s,\eta_2\cdot(\omega^j\circ N_{K/\mathbb{Q}})\big)\text{ for any }j.
\end{equation}
Due to the significant properties of partial Hecke $L$-functions, namely 
\begin{itemize}
\item[(a)]
Duality:
$$
L_K^\circ(s,*)\longleftrightarrow L_K^\circ(1-s,\overline{*}),
$$
\item[(b)]
Analytic behavior of $L_K^\circ(s,*)$,
\end{itemize}
we can extract valuable information from equation (\ref{intro:partial:hecke:equation}) that
$$
\big(\eta_{1,\mathrm{f}}(q)-\eta_{2,\mathrm{f}}(q)\big)\sum_{1\neq\psi\in\mathrm{Hom}((\mathbb{Z}/p^2\mathbb{Z})^\times,\mu_p)}\tau(\omega^j\psi\circ N_{K/\mathbb{Q}})\omega^j\psi\big(N_{K/\mathbb{Q}}(\mathfrak{m})\big)=0,
$$
for almost all $q$ and any $j$ (see the proof of Proposition \ref{nonvanishing:averagesum:characters}). 
If we can show that the twisted average of Gauss sums 
\begin{equation}\label{intro:gausssum:average}
\sum_{1\neq\psi\in\mathrm{Hom}((\mathbb{Z}/p^2\mathbb{Z})^\times,\mu_p)}\tau(\omega^j\psi\circ N_{K/\mathbb{Q}})\psi\big(N_{K/\mathbb{Q}}(\mathfrak{m})\big)
\end{equation}
is non-vanishing for some $j$, then $\eta_{1,\mathrm{f}}(q)=\eta_{2,\mathrm{f}}(q)$ for almost all rational primes $q$. 

In conclusion, the non-vanishing of the twisted average of Gauss sums provides a partial answer to Subquestion JLS.

\subsection{Gauss sums and global class field theory}
The average (\ref{intro:gausssum:average}) can be rewritten as follows:
$$
\sum_{\psi\neq 1}\tau(\omega^j\psi\circ N_{K/\mathbb{Q}})\psi\big(N_{K/\mathbb{Q}}(\mathfrak{m})\big)=p\sum_{\substack{\alpha\in (O_K/p^2O_K)^\times \\ N_{K/\mathbb{Q}}(\alpha\mathfrak{m})d_K\equiv 1\ (p^2) }}\omega^j( N_{K/\mathbb{Q}}(\alpha)d_K)\mathbf{e}\lc\frac{\mathrm{Tr}_{K/\mathbb{Q}}(\alpha)}{p^2}\rc,
$$
where $d_K$ is the discriminant of $K$ and $\alpha$ runs over mod $p^2O_K$ with $N_{K/\mathbb{Q}}(\alpha\mathfrak{m})d_K\equiv 1\ (p^2)$. This sum is very difficult to study due to the non-trivial relationship between the trace $\mathrm{Tr}_{K/\mathbb{Q}}$ and the norm $N_{K/\mathbb{Q}}$, except when $K=\mathbb{Q}$. 

At this moment, we impose a new assumption: Assume that $K$ is solvable, i.e., $K$ is a Galois number field that can be exhausted by a sequence of abelian extensions.
Then, for a non-trivial character $\phi\in\widehat{(\mathbb{Z}/p^2\mathbb{Z})^\times}$, we obtain that the Gauss sum $\tau(\phi\circ N_{K/\mathbb{Q}})$ can be expressed as the product of Gauss sums $\tau(\phi)$ of Dirichlet characters $\phi$ by using the duality and the decomposition of $L_K(s,\phi\circ N_{K/\mathbb{Q}})$, which is a part of global class field theory. More precisely,
\begin{equation}\label{intro:gausssum:decomposition}
\tau(\phi\circ N_{K/\mathbb{Q}})=C\phi(d_K)\tau(\phi)^{[K:\mathbb{Q}]}
\end{equation}
for some non-zero constant $C$ depends only on $K$ and the exponent of the conductor of $\phi$, where $d_K$ is the discriminant of $K$ (see Proposition \ref{gausssum:decomposition:2}). Note that a power of the Gauss sum of a Dirichlet character is easier to understand than the Gauss sum of a ray class character. Let us explain this in detail at the next subsection.

\subsection{Higher power of Gauss sums and Hyper Kloosterman sums}
It is well-known that high-powers of the Gauss sum of a Dirichlet character are related to {\it hyper Kloosterman sums}. 
For $M>0$ and $r\in (\mathbb{Z}/M\mathbb{Z})^\times$, let us define hyper Kloosterman sums of dimension $d>0$ and of modulus $M$ as follows: 
$$
\mathrm{Kl}_d(r;M):=\sum_{\substack{n_1,\cdots,n_d\in  (\mathbb{Z}/M\mathbb{Z})^\times \\ n_1\times\cdots\times n_d\equiv r(M) }}\mathbf{e}\Big(\frac{n_1+\cdots+n_d}{M}\Big).
$$
Gurak \cite{Gurak} computed the explicit values of $\mathrm{Kl}_d(r;p^n)$ when $n>1$. 
Especially, for $n>1$, the Kloosterman sum $\mathrm{Kl}_d(r;p^n)$ is non-vanishing if $r\equiv 1 \ (p)$.
If we assume that $K$ is solvable, then the $\omega^j$-weighted average of the sum $(\ref{intro:gausssum:average})$ is given by $\mathrm{Kl}_{d}(r;p^2)$ for some $r\equiv 1\ (p)$, where $d=[K:\mathbb{Q}]$. This implies that the average $(\ref{intro:gausssum:average})$ is non-vanishing for some $j$ (see Proposition \ref{twisted:average:gausssum}). 

\subsection{Main results}
Based on the preceding discussions, we obtain the following results toward Subquestion JLS:
\begin{prop}[Proposition \ref{nonvanishing:averagesum:characters}]
Let $A$ be a finite set of totally odd primitive ray class characters over $K$ modulo $\mathfrak{m}$.
Let $a_\eta$ be constants satisfying
$$
\sum_{\eta\in A}a_\eta c_\eta(m)=0
$$
for any positive integers $m$ such that $m^{p-1}\not\equiv 1\ (p^2)$.
If $K$ is solvable and $p$ does not divide $N_{K/\mathbb{Q}}(\mathfrak{m})$, then we have 
$$
\sum_{\eta\in A} a_\eta \tau(\eta)\eta_{\rm f}(p^2)=0\text{ and }\sum_{\eta\in A} a_\eta \tau(\eta)\eta_{\rm f}(p^2q)=0
$$
for any odd rational primes $q$ does not divide $pN_{K/\mathbb{Q}}(\mathfrak{m})$.
\end{prop}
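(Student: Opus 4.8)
The plan is to convert the vanishing of the linear combination $\sum_{\eta\in A}a_\eta c_\eta(m)$ into a statement about partial Hecke $L$-functions, pass to the dual side via the functional equation, and read off the Gauss-sum relations from the lowest-order Dirichlet coefficients on the dual side. First I would note that $c_{\eta\cdot(\omega^j\circ N_{K/\mathbb{Q}})}(m)=\omega^j(m)\,c_\eta(m)$, so multiplying the hypothesis by $\omega^j(m)$ shows that the same vanishing holds for every Teichm\"uller twist:
$$\sum_{\eta\in A}a_\eta\,L_K^\circ\big(s,\eta\cdot(\omega^j\circ N_{K/\mathbb{Q}})\big)=0\qquad\text{for all }j.$$
Next I would expand each $L_K^\circ$ using the indicator $\mathbf{1}[m^{p-1}\not\equiv 1\ (p^2)]=1-\tfrac1p\sum_{\psi\in\mathrm{Hom}((\mathbb{Z}/p^2\mathbb{Z})^\times,\mu_p)}\psi(m)$ valid for $m$ coprime to $p$, which rewrites the relation as a linear combination of complete Hecke $L$-functions $L_K\big(s,\eta\cdot(\omega^j\psi\circ N_{K/\mathbb{Q}})\big)$ (together with a $p$-depleted principal term), all of whose characters are nontrivial because $\eta$ is totally odd.

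Second, I would apply the duality (a) and the analytic behavior (b) of the partial Hecke $L$-function. The functional equation sends $s\mapsto 1-s$, replaces each character by its conjugate, and introduces the root number $W\propto\tau/\sqrt{N\mathfrak{f}}$. The crucial point is that the pieces split into two conductor scales: the principal ($\psi=1$) term has conductor $\mathfrak{m}$, while every ramified term ($\psi\neq1$) has conductor $\mathfrak{m}\cdot(p^2)$; since the factors $(N\mathfrak{f})^{s}$ are linearly independent functions of $s$, the ramified aggregate can be isolated. Because every $\eta\in A$ shares the conductor $\mathfrak{m}$ and the same totally odd infinity type, the gamma and conductor normalizations are common to all $\eta$ and factor out, so comparing the Dirichlet coefficient of the identity ideal $O_K$ on the dual side yields, up to a common nonzero normalization, a clean relation
$$\Big(\sum_{\eta\in A}a_\eta\,\tau(\eta)\,\eta_{\mathrm{f}}(p^2)\Big)\cdot G_j=0,\qquad G_j=\sum_{1\neq\psi}\tau(\omega^j\psi\circ N_{K/\mathbb{Q}})\,(\omega^j\psi)\big(N_{K/\mathbb{Q}}(\mathfrak{m})\big).$$

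Third, the separation of the $\eta$-dependence rests on the multiplicativity of Gauss sums over coprime conductors: since $p\nmid N_{K/\mathbb{Q}}(\mathfrak{m})$,
$$\tau\big(\eta\cdot(\omega^j\psi\circ N_{K/\mathbb{Q}})\big)=\eta_{\mathrm{f}}(p^2)\,(\omega^j\psi)\big(N_{K/\mathbb{Q}}(\mathfrak{m})\big)\,\tau(\eta)\,\tau(\omega^j\psi\circ N_{K/\mathbb{Q}}),$$
which exhibits the $\eta$-part as exactly $\tau(\eta)\eta_{\mathrm{f}}(p^2)$ and pushes everything else into $G_j$. To finish I would invoke solvability of $K$: by the decomposition (\ref{intro:gausssum:decomposition}) and Proposition \ref{twisted:average:gausssum}, the $\omega^j$-weighted value of $G_j$ collapses to a hyper Kloosterman sum $\mathrm{Kl}_{[K:\mathbb{Q}]}(r;p^2)$ with $r\equiv1\ (p)$, non-vanishing by Gurak's evaluation; hence $G_j\neq0$ for some $j$, giving $\sum_{\eta\in A}a_\eta\tau(\eta)\eta_{\mathrm{f}}(p^2)=0$. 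Running the identical comparison on the Dirichlet coefficient at a degree-one prime above an auxiliary prime $q\nmid pN_{K/\mathbb{Q}}(\mathfrak{m})$ — where the additional factor $\eta_{\mathrm{f}}(q)$ enters through the dual coefficient — yields the companion relation $\sum_{\eta\in A}a_\eta\tau(\eta)\eta_{\mathrm{f}}(p^2q)=0$.

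The main obstacle I anticipate is the bookkeeping that makes the $\eta$-dependence factor cleanly out of the $A$-sum: one must verify that the functional-equation normalizations (gamma factors, conductor powers, and the depleted Euler factors at the primes above $p$) are genuinely independent of $\eta$ so that they cancel, and that the conductor-scale separation legitimately isolates the ramified aggregate before one extracts a single Dirichlet coefficient. The genuinely hard input, however, is the non-vanishing of $G_j$ for some $j$; this is exactly where solvability of $K$ is indispensable, since only then does the twisted average of $\tau(\omega^j\psi\circ N_{K/\mathbb{Q}})$ reduce, via global class field theory and (\ref{intro:gausssum:decomposition}), to a single hyper Kloosterman sum whose non-vanishing is available.
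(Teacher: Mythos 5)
Your derivation of the first relation $\sum_{\eta\in A}a_\eta\tau(\eta)\eta_{\rm f}(p^2)=0$ is essentially the paper's own argument: twist the hypothesis by Teichm\"uller powers, use the orthogonality of $\mathrm{Hom}((\mathbb{Z}/p^2\mathbb{Z})^\times,\mu_p)$ to write the partial $L$-function as a combination of full Hecke $L$-functions, apply the functional equation, separate the two conductor scales (the paper does this by letting $s\to-\infty$, which is the same as your extraction of the constant Dirichlet coefficient on the dual side), factor the Gauss sums via the coprime-conductor multiplicativity, and kill the remaining factor $G_j$ by the hyper Kloosterman non-vanishing. That half is correct.

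The second relation is where your proposal has a genuine gap. Comparing Dirichlet coefficients on the dual side at the integer $q$ does not produce $\eta_{\rm f}(q)$: the coefficient of $q^{-(1-s)}$ in $L_K(1-s,\overline{\eta\chi^i\varpi^j})$ is the norm average $c_{\overline{\eta}}(q)\,\overline{\psi^i\omega^j}(q)$, i.e.\ a sum of $\overline{\eta}$ over \emph{all} ideals of norm $q$ (there is no way to isolate a single degree-one prime from a Dirichlet coefficient, and this sum may well be zero, e.g.\ when $q$ is inert). What you would obtain is
$$
\sum_{\eta\in A}a_\eta\,\tau(\eta)\,\eta_{\rm f}(p^2)\,c_{\overline{\eta}}(q)=0 ,
$$
which is a different (and weaker) statement: the target quantity $\eta_{\rm f}(q)$ is the value of the finite part of $\eta$ at the rational integer $q$, attached to the ideal $qO_K$ of norm $q^{d}$, not to ideals of norm $q$; its coefficient at $q^d$ is moreover contaminated by all other ideals of that norm. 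The paper's mechanism is different: choose a non-trivial \emph{even} primitive Dirichlet character $\omega_q$ modulo $q$ (this is why $q$ must be odd), set $\varpi_q=\omega_q\circ N$, and observe that the hypothesis is stable under this twist since $c_{\eta\varpi_q}(m)=\omega_q(m)c_\eta(m)$. Applying the already-established first relation to the twisted family $\{\eta\varpi_q\}$ and factoring with Lemma \ref{gausssum:decomposition} gives
$$
0=\sum_{\eta\in A}a_\eta\,\tau(\eta\varpi_q)\,(\eta\varpi_q)_{\rm f}(p^2)
=\tau(\varpi_q)\,\varpi_q(p^2\mathfrak{m})\sum_{\eta\in A}a_\eta\,\tau(\eta)\,\eta_{\rm f}(p^2q),
$$
and dividing by the nonzero prefactor $\tau(\varpi_q)\varpi_q(p^2\mathfrak{m})$ yields the claim. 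In short: $\eta_{\rm f}(q)$ can only be created by twisting by a character of modulus $qO_K$ (so that it appears through the Gauss-sum factorization, exactly as $\eta_{\rm f}(p^2)$ did), not by reading off dual Dirichlet coefficients.
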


From the above determination result and the preceding discussion, finally we obtain the following result toward Question JLS:
\begin{thm}[Theorem \ref{main:result}]
Let us assume that $K$ is solvable and $p$ does not divide $N_{K/\mathbb{Q}}(\mathfrak{m})$.
For almost all totally even primitive cyclotomic characters $\chi$ over $K$ modulo a $p$-power,
$$
F\big(L_K(0,\eta\chi)\big)=F(c_\eta^\prime,\chi).
$$
\end{thm}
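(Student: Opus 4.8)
The plan is to deduce the theorem from Proposition~\ref{intro:hecke:field:generation} by showing that the twist $\chi$ appearing on the left of part~(1) is redundant. Part~(1) already gives $F(L_K(0,\eta\chi),\chi)=F(c_\eta^\prime,\chi)$ for almost all totally even primitive cyclotomic $\chi$, so it suffices to prove that $F(\chi)\subset F\big(L_K(0,\eta\chi)\big)$. By part~(2) of the same proposition, this inclusion holds once we know that $c_\eta(m)\neq 0$ for at least one integer $m>0$ with $m^{p-1}\not\equiv 1\ (p^2)$, i.e. once we know that the restricted norm average $c_\eta^\circ$ is not identically zero. Thus the entire theorem reduces to the single assertion $c_\eta^\circ\neq 0$.

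To prove $c_\eta^\circ\neq 0$ I would argue by contradiction, using Proposition~\ref{nonvanishing:averagesum:characters} with the one-element set $A=\{\eta\}$ and coefficient $a_\eta=1$. Suppose $c_\eta^\circ$ vanishes identically; then $\sum_{\eta\in A}a_\eta c_\eta(m)=c_\eta(m)=0$ for every $m$ with $m^{p-1}\not\equiv 1\ (p^2)$, so the hypothesis of that proposition is met. Since $K$ is solvable and $p\nmid N_{K/\mathbb{Q}}(\mathfrak{m})$ by assumption, its conclusion forces $\tau(\eta)\,\eta_{\rm f}(p^2)=0$. This is impossible: the Gauss sum $\tau(\eta)$ of the primitive character $\eta$ is non-zero, and the hypothesis $p\nmid N_{K/\mathbb{Q}}(\mathfrak{m})$ makes $p$ coprime to the conductor $\mathfrak{m}$, so $\eta_{\rm f}(p^2)$ is a root of unity, in particular non-zero. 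This contradiction gives $c_\eta^\circ\neq 0$.

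With $c_\eta^\circ\neq 0$ established, fix an integer $m>0$ with $m^{p-1}\not\equiv 1\ (p^2)$ and $c_\eta(m)\neq 0$, and apply Proposition~\ref{intro:hecke:field:generation}(2) to obtain the inclusion $F(\chi)\subset F\big(L_K(0,\eta\chi)\big)$. Combining this with part~(1), which holds for almost all totally even primitive cyclotomic $\chi$, we find that for almost all such $\chi$ the generator $\chi$ on the left is redundant, so that $F\big(L_K(0,\eta\chi)\big)=F(L_K(0,\eta\chi),\chi)=F(c_\eta^\prime,\chi)$, the asserted equality.

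The mathematical substance is entirely concentrated in the non-vanishing $c_\eta^\circ\neq 0$, and hence in Proposition~\ref{nonvanishing:averagesum:characters}, whose proof rests on the two structural inputs highlighted in the introduction: the solvability of $K$, which through global class field theory and the duality for $L_K^\circ$ factors $\tau(\phi\circ N_{K/\mathbb{Q}})$ as a non-zero constant times $\tau(\phi)^{[K:\mathbb{Q}]}$ (Proposition~\ref{gausssum:decomposition:2}), and Gurak's evaluation of the hyper Kloosterman sums $\mathrm{Kl}_d(r;p^2)$, which ensures that the twisted average of Gauss sums $(\ref{intro:gausssum:average})$ is non-zero for some $j$. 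Granting those, the present theorem is a short formal assembly; the only points requiring care are the book-keeping of ``almost all'' and the second use of the hypothesis $p\nmid N_{K/\mathbb{Q}}(\mathfrak{m})$ to guarantee $\eta_{\rm f}(p^2)\neq 0$.
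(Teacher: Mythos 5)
Your proposal is correct and takes essentially the same route as the paper: the paper's proof combines Proposition~\ref{hecke:field:generation} (your part~(1)) with Proposition~\ref{cyclotomic:generation} (your inclusion $F(\chi)\subset F\big(L_K(0,\eta\chi)\big)$), and the latter rests on Lemma~\ref{nonvanishing:primitive}, whose proof in the paper is exactly your contradiction argument --- taking $A=\{\eta\}$ in Proposition~\ref{nonvanishing:averagesum:characters} and observing that $\tau(\eta)\eta_{\rm f}(p^2)\neq 0$ for primitive $\eta$ with $p\nmid N_{K/\mathbb{Q}}(\mathfrak{m})$.
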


\begin{thm}[Theorem \ref{main:theorem:final}]\label{intro:main:result}
Let us assume that $K$ is solvable and $p$ does not divide $N_{K/\mathbb{Q}}(\mathfrak{m})$.
For almost all totally even primitive cyclotomic characters $\chi$ over $K$ modulo $p$-power,
$$
F(\eta_{\rm f}^\prime,\chi)\subset F(c_\eta^\prime,\chi)\subset  F\big(L_K(0,\eta\chi)\big)\subset F(\eta,\chi),
$$
where $\eta_{\rm f}^\prime$ is the restriction of $\eta_{\rm f}$ to the set of odd rational primes coprime to $pN_{K/\mathbb{Q}}(\mathfrak{m})$.
\end{thm}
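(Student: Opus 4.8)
The plan is to establish the displayed chain one inclusion at a time, reading off three of the four fields from results already in hand and spending the real effort on the first inclusion. Write the chain as
$$
F(\eta_{\rm f}^\prime,\chi)\;\overset{(\mathrm{A})}{\subset}\;F(c_\eta^\prime,\chi)\;\overset{(\mathrm{B})}{\subset}\;F\big(L_K(0,\eta\chi)\big)\;\overset{(\mathrm{C})}{\subset}\;F(\eta,\chi).
$$
Inclusion $(\mathrm{C})$ is the soft one: since $\eta$ is totally odd and $\chi$ totally even, the twist $\eta\chi$ is a totally odd primitive ray class character over the totally real field $K$, so $s=0$ is critical and the Klingen--Siegel rationality theorem gives $L_K(0,\eta\chi)\in\mathbb{Q}(\eta\chi)\subset\mathbb{Q}(\eta,\chi)$. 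Adjoining $F$ yields $F\big(L_K(0,\eta\chi)\big)\subset F(\eta,\chi)$.

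For inclusion $(\mathrm{B})$ I would combine the two halves of Proposition~\ref{intro:hecke:field:generation}. Part~(1) gives, for almost all $\chi$, the equality $F\big(L_K(0,\eta\chi),\chi\big)=F(c_\eta^\prime,\chi)$, so it suffices to check that $\chi\in F\big(L_K(0,\eta\chi)\big)$, which is exactly the conclusion of part~(2) under the hypothesis that $c_\eta(m)\neq0$ for some $m$ with $m^{p-1}\not\equiv1\ (p^2)$. That hypothesis I would verify by the one--term case of the determination result: if instead $c_\eta^\circ\equiv0$, then applying Proposition~\ref{nonvanishing:averagesum:characters} to the singleton $A=\{\eta\}$ with $a_\eta=1$ forces $\tau(\eta)\eta_{\rm f}(p^2)=0$, contradicting the non-vanishing of the Gauss sum of the primitive character $\eta$ together with $\eta_{\rm f}(p^2)\neq0$ (as $p\nmid N_{K/\mathbb{Q}}(\mathfrak{m})$). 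Hence $c_\eta^\circ\not\equiv0$, part~(2) applies, and $F(c_\eta^\prime,\chi)=F\big(L_K(0,\eta\chi),\chi\big)=F\big(L_K(0,\eta\chi)\big)$, giving $(\mathrm{B})$.

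The heart of the argument is inclusion $(\mathrm{A})$, which I would obtain by Galois descent. Fix $\sigma\in\Gal\big(\overline{\mathbb{Q}}/F(c_\eta^\prime,\chi)\big)$ and set $\eta^\sigma:=\sigma\circ\eta$, again a totally odd primitive ray class character modulo $\mathfrak{m}$ (conjugation preserves infinity type, conductor, and modulus). Since $\sigma$ commutes with the finite sum defining the norm average, $\sigma\big(c_\eta(m)\big)=c_{\eta^\sigma}(m)$; as $\sigma$ fixes every value $c_\eta(m)$ with $m^{p-1}\not\equiv1\ (p^2)$ (these lie in $F(c_\eta^\prime,\chi)$), we get $c^\circ_{\eta^\sigma}=c^\circ_\eta$, i.e. $c_\eta(m)-c_{\eta^\sigma}(m)=0$ on that set. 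Feeding $A=\{\eta,\eta^\sigma\}$ with $a_\eta=1,\ a_{\eta^\sigma}=-1$ into Proposition~\ref{nonvanishing:averagesum:characters} produces
$$
\tau(\eta)\eta_{\rm f}(p^2)=\tau(\eta^\sigma)\eta^\sigma_{\rm f}(p^2),\qquad \tau(\eta)\eta_{\rm f}(p^2q)=\tau(\eta^\sigma)\eta^\sigma_{\rm f}(p^2q)
$$
for every odd prime $q\nmid pN_{K/\mathbb{Q}}(\mathfrak{m})$. Dividing the second relation by the first --- legitimate because the primitive Gauss sums $\tau(\eta),\tau(\eta^\sigma)$ and the root of unity $\eta_{\rm f}(p^2)$ are non-zero --- and using multiplicativity of $\eta_{\rm f}$ at the coprime pair $(p^2,q)$ yields $\eta_{\rm f}(q)=\eta^\sigma_{\rm f}(q)=\sigma\big(\eta_{\rm f}(q)\big)$. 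Thus $\sigma$ fixes $\eta_{\rm f}^\prime(q)$ for all admissible $q$; as $\sigma$ was arbitrary, every value of $\eta_{\rm f}^\prime$ lies in $F(c_\eta^\prime,\chi)$, and adjoining $\chi$ gives $(\mathrm{A})$.

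The main obstacle is not in this assembly but upstream, in Proposition~\ref{nonvanishing:averagesum:characters}: the two-variable determination (forcing the values at both $p^2$ and $p^2q$) is precisely what lets the division above isolate $\eta_{\rm f}(q)$, and it rests on the non-vanishing of the twisted average of Gauss sums, which is secured only after the solvability of $K$ converts it into a Hyper Kloosterman sum $\mathrm{Kl}_{[K:\mathbb{Q}]}(r;p^2)$ with $r\equiv1\ (p)$. The remaining bookkeeping --- that $\eta^\sigma$ stays totally odd and primitive of the same modulus under conjugation, and that the ``almost all $\chi$'' exceptional set is the one inherited from Proposition~\ref{intro:hecke:field:generation}(1) (inclusions $(\mathrm{A})$ and $(\mathrm{C})$ holding for every such $\chi$) --- is routine.
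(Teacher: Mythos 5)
Your proposal is correct and takes essentially the same route as the paper: your inclusion (B) is the paper's Theorem \ref{main:result}, with its non-vanishing hypothesis checked by the same singleton application of Proposition \ref{nonvanishing:averagesum:characters} that constitutes the paper's proof of Lemma \ref{nonvanishing:primitive}, and your Galois-descent argument for (A) with $A=\{\eta,\eta^\sigma\}$ is precisely the paper's proof of Theorem \ref{nonvanishing:averagesum:characters:2}, glued in via $\mathbb{Q}(c_\eta^\circ)\subset\mathbb{Q}(c_\eta^\prime)$ exactly as in the paper's proof of Theorem \ref{main:theorem:final}. The only (harmless) difference is bookkeeping: the exceptional set of $\chi$ is inherited from both Proposition \ref{hecke:field:generation} and Proposition \ref{cyclotomic:generation}, not just from part (1).
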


\begin{remark}
We have mentioned that {\rm Jun-Lee-Sun} \cite{JLS} prove the Hecke field generation when $\eta$ is trivial. However, the $L$-value $L_K(0,\eta\chi)$ can be decomposed into the product of Dirichlet $L$-values when $\eta$ is trivial and $K$ is abelian. It is relatively straightforward to prove the Hecke field generation problem for Dirichlet $L$-values since the norm average is trivial in this case. Consequently, one might assume that the methodology of {\rm Jun-Lee-Sun} \cite{JLS} cannot be applied to the case of non-trivial $\eta$ and abelian $K$. Nevertheless, in this paper, we utilize the estimation {\rm Theorem \ref{intro:theorem2} } based on {\rm Jun-Lee-Sun} \cite{JLS}, along with the aforementioned new inputs, to prove the result toward {\rm Question JLS} even when $\eta$ is non-trivial. Thus, this paper, together with the work of {\rm Jun-Lee-Sun} \cite{JLS} plays a significant role in the Hecke field generation problem for {\rm GL(1)} automorphic forms over totally real fields.
\end{remark}

\begin{remark}
We have mentioned in the previous remark that the $L$-function of a cyclotomic character over $K$ can be decomposed into Dirichlet $L$-functions for an abelian extension $K$, but this argument does not hold for a solvable field $K$ in general, since the characters of each layer of abelian extensions cannot be factored through by the norm map in general. But our trick (see Proposition \ref{gausssum:decomposition:2}) allows a decomposition of Gauss sums of cyclotomic characters even $K$ is solvable (so that non-abelian), which is crucial for the proof of the determination arguments.
\end{remark}

\subsection{Selberg orthonormality and Langlands functoriality}
One possible approach to giving the complete answer to Question JLS is through the Joint universality of Hecke $L$-functions.
Let $\eta_1$ and $\eta_2$ be tame ray class characters over $K$. Then, 
$c_{\eta_1}^\prime=c_{\eta_2}^\prime$
if and only if
\begin{equation}\label{intro:equation:universality}
L_K(s,\eta_1\chi)=L_K(s,\eta_2\chi)
\end{equation}
for any cyclotomic characters $\chi$ over $K$ modulo a $p$-power.
If the joint universality holds for Hecke $L$-functions, then equation (\ref{intro:equation:universality}) implies that $\eta_1=\eta_2$, thereby legitimating Question JLS.
It is well known that if the following quantities
\begin{equation}\label{intro:selberg:ortho}
\sum_{q\leq x}c_{\eta_i}(q)\overline{c_{\eta_j}(q)}
\end{equation}
for $i,j=1,2$ satisfying some ``good'' estimation on $x$, then $L_K(s,\eta_1)$ and $L_K(s,\eta_2)$ are jointly universal (see Lee-Nakamura-Pa{\'n}kowski \cite{LNP}).

Let $d=[K:\mathbb{Q}]$. If $K$ is solvable, then by the Langlands functoriality for solvable induction proved by Arthur-Clozel \cite[Section 6]{basechange}, the induced representation $\pi_\eta:=\mathrm{Ind}_{GL_1(\mathbb{A}_K)}^{GL_d(\mathbb{A}_\mathbb{Q})}(\eta)$ is automorphic, whose Satake parameter is given by
$$
(\eta\circ g)_{g\in\mathrm{Gal}(K/\mathbb{Q})},
$$
where $\mathrm{Gal}(K/\mathbb{Q})$ acts on the ray class groups.
From this, some ``twisted sum'' of the Rankin-Selberg $L$-function $L(s,\pi_{\eta_1}\times\pi_{\overline{\eta_2}})$ can be written as follows:
$$
\text{``twisted sum'' of }L(s,\pi_{\eta_i}\times\pi_{\overline{\eta_j}})=\sum_{m>0}\frac{c_{\eta_i}(m)\overline{c_{\eta_j}(m)}}{m^s}.
$$
If we take the logarithmic derivative of the above equation and estimate it using the analytic properties of $L(s,\pi_{\eta_i}\times\pi_{\overline{\eta_j}})$, as is done for the Riemann $\zeta$-function to prove the prime number theorem, we may obtain an estimation for equation (\ref{intro:selberg:ortho}). 
It is one of the author's future projects that proving the generation problem by adopting this method.
Let us conclude this section with the following important remarks:

\begin{remark}
Our discussion in this subsection provides an evidence that justifies our assumption in the main result {\rm (Theorem \ref{intro:main:result})} that $K$ is solvable, as there has been no progress on the Langlands functoriality on the induction for $\mathrm{GL}_d(\mathbb{A}_\mathbb{Q})\rightarrow\mathrm{GL}_1(\mathbb{A}_K)$ since the result of {\rm Arthur-Clozel} \cite{basechange}.

In other words, we speculate that achieving results beyond solvable extensions in both directions (Analyzing gauss sum and proving the automorphy of induction) presents an equal level of difficulty. Proving these results only by the Gauss sum argument for general $K$ would be a challenging task.
\end{remark}

\begin{remark}
Similar assumptions on $K$ appear in the context of the universality of Artin $L$-functions. By using the decomposition of the Dedekind $\zeta$-function of $K$ into the Riemann $\zeta$-function and the Artin $L$-function, {\rm Cho-Kim} \cite{artin} proved the universality of the Artin $L$-function $L_K(s,\rho)$ when $K$ is a $S_{d+1}$-field for $d+1=3,4$, and $5$ (with the additional assumption for $d+1=5$ that $L_K(s,\rho)$ is automorphic).
\end{remark}

\section{Hecke {\it L}-functions and its {\it L}-vaues}\label{sec:cyc:def}
In this section, we give basic definitions and summarize the results in Jun-Lee-Sun \cite{JLS}. 

\subsection{Notation and definition}

Let us list notations and definitions that are frequently used in the paper.

\begin{itemize}

\item For an integer $M\geq 1$, the set of the $M$-th roots of unity in $\FC$ is denoted by $\mu_M$.

\item For an odd prime $p$, the torsion part of $\FZ_p^\times$ is denoted by $\mu_{p-1}$. Hence, we have the decomposition $\FZ_p^\times=\mu_{p-1}\cdot (1+p\FZ_p)$.

\item For $x\in\FQ$, we set $\be(x):=\exp(2\pi ix)$.

\item The number field $K$ is totally real and of degree $d=[K:\FQ]$. The ring of integers in $K$ is denoted by $O_K$ with the different $\mathfrak{d}_K$ and the discriminant $d_K=N_{K/\FQ}(\mathfrak{d}_K)$. The set of totally positive units of $O_K$ is denoted by $E_K$.

\item In this paper, $p>2$ is unramified in $K$.

\item Every ray class group ${\rm Cl}_K(\fm)$ for an integral ideal $\fm$ is defined in the strict sense, i.e., quotient of the set of ideals prime to $\fm$ by the set of totally positive elements that congruent to $1$ modulo $\fm$.
 
\item The Minkowski space $K\otimes_\FQ \FR$ is denoted by $K_\FR$. The set of totally positive elements in $K_\FR$ is denoted by $K_\FR^+$. For any subset $B\subseteq K_\FR$, we set $B^+:=B\cap K_\FR^+$.


\item We set $K_p:=K\otimes \mathbb{Q}_p$ and $O_p:=O_K\otimes \mathbb{Z}_p$. By C.R.T., we have $O_p=\prod_{\wp|p}O_\wp$ for a prime $\wp$ above $p$ and the integer ring $O_\wp$ of $K_\wp$. 

\item Let $N:O_p\rightarrow \FZ_p$ be the extension of $N_{K/\FQ}$. We use the same symbol for $N:{\rm Cl}_K(p^\infty)\rightarrow \FZ_p^\times$ and $N:K_\FR^\times\rightarrow \FR^\times$.

\item Let $\Tr=\sum_{\wp|p}\Tr_{K_\wp/\mathbb{Q}_p}$ be the 
trace of $K_p/\mathbb{Q}_p$. 

\end{itemize}

\subsection{Ray class characters of totally real fields}\label{sec:ray:class:char}

Let $\xi$ be a ray class character modulo $\fm$. 
Define $\xi_\infty : K_\FR^\times \to S^1$ as the composition 
$$\xi_\infty:K_\FR^\times\rightarrow K_\FR^\times/K_\FR^+\stackrel{\simeq}{\longrightarrow} K_\fm/K_\fm^+\hookrightarrow {\rm Cl}_K(\fm)\stackrel{\xi}{\longrightarrow}S^1,$$
where $K_\fm=\{\alpha\in K\mid \alpha\equiv1\Pmod{\fm}\}$. 
Define $\xi_{\rm f}:(O_K/\fm)^\times \to S^1$ such that   for $\alpha\in O_K$, we set 
\begin{align}\label{def:xif}
	\xi_{\rm f}(\alpha):=\xi(\alpha O_K)\xi_\infty(\alpha)^{-1}.
\end{align}
In particular, we have $\xi(\alpha O_K)=\xi_{\rm f}(\alpha)$  for $\alpha\in K^+$ with $(\alpha,\fm)=1.$

\subsection{Gauss sums of ray class characters}
Let $\xi$ be a ray class character modulo $\mathfrak{m}$.
Let $\beta$ be an element of $\fm^{-1}\mathfrak{d}_K^{-1}\cap K^+$ with $(\beta\fm\mathfrak{d}_K,\fm)=1$. 
The Gauss sum of $\xi$ is defined as
\begin{align*}
\tau(\xi):=\xi(\beta\fm\mathfrak{d}_K)\sum_{\alpha \in (O_K/\fm)^\times}\xi_{\rm f}(\alpha)\be(\Tr(\alpha \beta))
\end{align*}
The condition $\beta\in K^+$ is required to ensure that the definition is independent of the choice of $\beta$. When $\xi$ is primitive, the Gauss sum satisfies
$$|\tau(\xi)|^2=N(\fm)\mbox{ and }\tau(\xi)\tau(\overline{\xi})=\xi_{\rm f}(-1)N(\fm).$$

\subsection{Functional equation of Hecke $L$-functions of totally real fields}\label{sec:hecke:Lvalues}

The Hecke $L$-function for a ray class character $\xi$ is the meromorphic continuation of an $L$-series given by
$$
L_K(s,\xi):=\sum_{\fa }\frac{\xi(\fa)}{N(\fa)^{s}}
$$
for $\Re(s)>1$, where $\fa$ runs through the integral ideals of $K$. Note that we conventionally set $\xi(\fa)=0$ if $\fa$ is not coprime to $\fm$. Let $d_1, d_2$ be the number of $1$, $-1$ in the signature of $\xi$, respectively.
Then the infinite part is given as 
$$L_{\infty}(s,\xi):=\pi^{-\frac{d_1s}{2}}\Gamma\lc\frac{s}2\rc^{d_1}\pi^{-\frac{d_2(s+1)}{2}}\Gamma\lc\frac{s+1}2\rc^{d_2}.$$
The complete Hecke $L$-function is the product
$$\Lambda_K(s,\xi)=d_K^{\frac{s}{2}}N(\fm)^{\frac{s}{2}} L_K(s,\xi) L_{\infty}(s,\xi).$$
The function $\Lambda_K(s,\xi)$ continues analytically to the whole $\FC$ and satisfies the following functional equation 
$$\Lambda_K(s,\xi)=W(\xi)\Lambda_K(1-s,\overline{\xi}  )$$
where
\begin{align*}
W(\xi)&:=i^{-d_2}\frac{\tau(\xi)}{\sqrt{N(\fm)}} 
\end{align*}
(cf. Theorem 8.5 in Neukirch \cite{Neu}).
If $\xi$ is totally odd, then we obtain that
\begin{equation}\label{algebraicity}
L_K(0,\overline{\xi})=\frac{\sqrt{d_K}}{(\pi i)^d}\tau(\overline{\xi})L_K(1,\xi)
\end{equation}
by putting $s=1$. 
As the partial zeta value for a ray class at the critical point $0$ is a rational number (see Neukirch \cite[Theorem 9.8]{Neu}), we have $L_K(0,\xi)\in \FQ(\xi)$ and for $\sigma \in \Gal(\overline{\FQ}/\FQ),$ 
\begin{equation}\label{reci}
L_K(0,\xi)^\sigma=L_K(0,\xi^{\sigma}).
\end{equation}
From (\ref{algebraicity}) and (\ref{reci}), we have the following reciprocity law:
\begin{prop}\label{reciprocity}
Let $\sigma\in{\rm \Gal}(\overline{\FQ}/\FQ)$ and $\xi$ be a totally odd ray class character. Then one has
\begin{align*}
\left(\frac{\sqrt{d_K}\tau(\overline{\xi}  )L_K(1,\xi)}{(\pi i)^{d}}\right)^\sigma= \frac{\sqrt{d_K}\tau(\overline{\xi}^\sigma)L_K(1,\xi^\sigma)}{(\pi i)^{d}}.
\end{align*}
\end{prop}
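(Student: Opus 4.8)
The plan is to observe that the entire content of the proposition is already packaged in the two displayed facts (\ref{algebraicity}) and (\ref{reci}); the proof is then a matter of threading them together while keeping track of how the Galois action interacts with complex conjugation of character values. First I would note that, by (\ref{algebraicity}) applied to $\xi$, the quantity inside the parentheses on the left-hand side is nothing but the (rational) number $L_K(0,\overline{\xi})$. Thus the left-hand side of the asserted identity equals $L_K(0,\overline{\xi})^\sigma$, and the whole statement reduces to checking
$$
L_K(0,\overline{\xi})^\sigma=\frac{\sqrt{d_K}\,\tau(\overline{\xi}^\sigma)L_K(1,\xi^\sigma)}{(\pi i)^{d}}.
$$

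Next I would apply the reciprocity (\ref{reci}) to the character $\overline{\xi}$ in place of $\xi$, obtaining $L_K(0,\overline{\xi})^\sigma=L_K(0,(\overline{\xi})^\sigma)$. To match the right-hand side I must then identify $(\overline{\xi})^\sigma$ with $\overline{\xi^\sigma}$. This is the one point that genuinely deserves care: for each integral ideal $\fa$ the value $\xi(\fa)$ is a root of unity, so complex conjugation agrees with inversion on it, and since $\sigma$ commutes with inversion one gets $\sigma(\overline{\xi(\fa)})=\sigma(\xi(\fa)^{-1})=\sigma(\xi(\fa))^{-1}=\overline{\xi^\sigma(\fa)}$; hence $(\overline{\xi})^\sigma=\overline{\xi^\sigma}$ as ray class characters, and consequently $L_K(0,\overline{\xi})^\sigma=L_K(0,\overline{\xi^\sigma})$.

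Finally I would apply (\ref{algebraicity}) a second time, now to $\xi^\sigma$. Before invoking it I would check that $\xi^\sigma$ is again totally odd, so that the hypothesis of (\ref{algebraicity}) is met: its signature is read off from the values of $\xi_\infty$ at the infinite places, which lie in $\{\pm1\}$ and are therefore fixed by $\sigma$, so the signature is unchanged. Applying (\ref{algebraicity}) to $\xi^\sigma$ then yields $L_K(0,\overline{\xi^\sigma})=\sqrt{d_K}\,\tau(\overline{\xi^\sigma})L_K(1,\xi^\sigma)/(\pi i)^{d}$, and combining this with the two previous steps closes the chain and gives exactly the claimed equality.

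The argument is entirely formal, carrying no new analytic input beyond the functional-equation consequence (\ref{algebraicity}) and the rationality/equivariance statement (\ref{reci}). The only real obstacle is the bookkeeping around conjugation, namely verifying $(\overline{\xi})^\sigma=\overline{\xi^\sigma}$ and the preservation of total oddness under $\sigma$; both hinge on the single fact that the character values in play are roots of unity, on which $\sigma$ commutes with inversion and fixes $\pm1$.
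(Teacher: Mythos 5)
Your proof is correct and follows exactly the route the paper intends: the paper derives Proposition \ref{reciprocity} directly by combining (\ref{algebraicity}) and (\ref{reci}), which is precisely your chain of substitutions. Your extra bookkeeping (verifying $(\overline{\xi})^\sigma=\overline{\xi^\sigma}$ and that $\sigma$ preserves total oddness, both via the root-of-unity observation) fills in details the paper leaves implicit, and is accurate.
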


\subsection{Cyclotomic characters over totally real fields}\label{sec:cyclotomic_ch}

In this section, we present a definition of cyclotomic characters, and discuss the explicit expressions of Gauss sums.

\begin{defn}
A {\it cyclotomic character} over $K$ modulo a $p$-power is a character $\chi$ on ${\rm Cl}_K(p^{\infty})$ that is a composition of the norm $N$ and a Dirichlet character $\psi$ on $\FZ_p^\times$:
$$
\chi:{\rm Cl}_K(p^\infty)\stackrel{N}{\longrightarrow}\FZ_p^\times\stackrel{\psi}{\longrightarrow}\FC^\times.
$$
\end{defn}

Note that $\psi_\infty:\FR^\times\rightarrow S^1$ is given as $\psi_\infty=\psi\circ {\rm sgn}$ where ${\rm sgn}:\FR^\times\rightarrow \{\pm1\}$ is the sign function. The cyclotomic characters satisfying the following relations:
\begin{prop}[Jun-Lee-Sun {\cite[Proposition 3.1]{JLS}}]\label{cyclo}
Let $\chi=\psi\circ N$ be a cyclotomic character on ${\rm Cl}_K(p^n)$. Then, the followings are true:
\begin{enumerate}
\item $\chi_{\rm f}=\psi\circ N$ and $\chi_\infty=\psi_\infty\circ N$.
\item  $\chi$  is primitive if and only if $\psi$ is primitive.
\item $\chi$ is totally even if and only if $\psi$ is even.
\end{enumerate}
\end{prop}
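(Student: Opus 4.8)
The plan is to establish statement (1) first, since (2) and (3) then follow almost formally. Statement (1) is the compatibility of the archimedean/finite decomposition $\xi\mapsto(\xi_\infty,\xi_{\rm f})$ with the norm, and I would prove it by tracing a single element through the definitions, using the fact that the (positive) ideal norm and the archimedean norm together reconstruct the signed field norm that $\psi$ sees. Fix $\alpha\in O_K$ prime to $p$. Since $\chi=\psi\circ N$ with $N$ the ideal norm on ${\rm Cl}_K(p^n)$, one has $\chi(\alpha O_K)=\psi\big(|N_{K/\FQ}(\alpha)|\big)$, the ideal norm of $\alpha O_K$ being the positive integer $|N_{K/\FQ}(\alpha)|$. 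Unwinding the composite defining $\chi_\infty$, the image of $\alpha$ in $K_\FR^\times/K_\FR^+$ is its sign vector, and choosing $\beta$ with $\beta\equiv1\Pmod{p^n}$ having the same signs gives $\chi_\infty(\alpha)=\chi(\beta O_K)=\psi\big(|N_{K/\FQ}(\beta)|\big)$. Because $\beta\equiv1\Pmod{p^n}$ forces $N_{K/\FQ}(\beta)\equiv1\Pmod{p^n}$, one gets $|N_{K/\FQ}(\beta)|\equiv\mathrm{sgn}\,N_{K/\FQ}(\beta)=\mathrm{sgn}\,N(\alpha)\Pmod{p^n}$, so $\chi_\infty(\alpha)=\psi(\mathrm{sgn}\,N(\alpha))=(\psi_\infty\circ N)(\alpha)$; as the archimedean norm $K_\FR^\times\to\FR^\times$ is surjective, this identifies $\chi_\infty=\psi_\infty\circ N$ on all of $K_\FR^\times$. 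Finally $\chi_{\rm f}(\alpha)=\chi(\alpha O_K)\chi_\infty(\alpha)^{-1}=\psi\big(|N_{K/\FQ}(\alpha)|\big)\psi\big(\mathrm{sgn}\,N_{K/\FQ}(\alpha)\big)^{-1}=\psi\big(N_{K/\FQ}(\alpha)\big)=(\psi\circ N)(\alpha)$, using $|x|\cdot\mathrm{sgn}(x)=x$; this is (1).

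Given (1), statement (3) is immediate: $\chi$ is totally even iff $\chi_\infty\equiv1$, and since $\chi_\infty=\psi_\infty\circ N$ with $N:K_\FR^\times\to\FR^\times$ surjective, this holds iff $\psi_\infty=\psi\circ\mathrm{sgn}$ is trivial, i.e. iff $\psi(-1)=1$, which is exactly evenness of $\psi$. For (2) I would compare conductors at $p$ through $\chi_{\rm f}=\psi\circ N$. Writing $U_n=(O_K/p^n)^\times$, $V_n=(\FZ/p^n\FZ)^\times$ and $U_n^{(1)},V_n^{(1)}$ for the kernels of reduction modulo $p^{n-1}$, primitivity modulo $p^n$ means non-triviality of the restriction to $U_n^{(1)}$ (resp. $V_n^{(1)}$). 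The key input is that the norm restricts to a surjection $N:U_n^{(1)}\twoheadrightarrow V_n^{(1)}$: for $n\geq2$ this follows from $N(1+p^{n-1}x)\equiv1+p^{n-1}\Tr_{K/\FQ}(x)\Pmod{p^n}$ together with surjectivity of $\Tr_{K/\FQ}:O_K/p\to\FZ/p$, and the case $n=1$ from surjectivity of the residue-field norm $(O_K/p)^\times\to(\FZ/p\FZ)^\times$. Both surjectivities use that $p$ is unramified in $K$, so that $p\nmid d_K$ and the trace form is non-degenerate modulo $p$. Then $\chi_{\rm f}|_{U_n^{(1)}}=(\psi|_{V_n^{(1)}})\circ N$ is non-trivial iff $\psi|_{V_n^{(1)}}$ is, which yields (2).

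The main obstacle is the sign bookkeeping in (1): one must reconcile the ideal norm, which is intrinsically positive, with the signed field norm $N_{K/\FQ}(\alpha)$ appearing in $\psi\circ N$, and verify that the discrepancy is absorbed exactly by the archimedean factor $\chi_\infty$ under the narrow (strict) ray class conventions of the paper. Once (1) is in hand, (3) is purely formal, and (2) reduces to the level-wise surjectivity of the norm on units; that surjectivity is precisely where the standing hypothesis that $p$ is unramified in $K$ enters.
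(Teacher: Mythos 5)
The paper itself gives no proof of this proposition: it is imported verbatim from Jun--Lee--Sun \cite[Proposition 3.1]{JLS}, so there is no internal argument to compare yours against; I can only assess your argument on its own terms. Parts (1) and (3) are correct. The sign bookkeeping in (1) through the strict ray class conventions is exactly the right thing to check, and the reduction of (3) to surjectivity of $N:K_{\mathbb{R}}^\times\to\mathbb{R}^\times$ is fine. One cosmetic remark on (1): your closing appeal to surjectivity of the archimedean norm is not the relevant point. What you need is that both $\chi_\infty$ and $\psi_\infty\circ N$ factor through the sign group $K_{\mathbb{R}}^\times/K_{\mathbb{R}}^+$, and that your $\beta$-computation of $\chi_\infty$ on a sign class never used that the representative was an algebraic integer; the same argument applies verbatim to an arbitrary $x\in K_{\mathbb{R}}^\times$, which already gives the identity everywhere.

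Part (2) has a genuine gap. You take as the criterion for primitivity of $\chi$ modulo $p^n$ the non-triviality of $\chi_{\rm f}$ on $U_n^{(1)}=\ker\big((O_K/p^n)^\times\to(O_K/p^{n-1})^\times\big)$. But non-triviality on $U_n^{(1)}$ is equivalent only to ``the conductor of $\chi$ does not divide $p^{n-1}O_K$,'' which is strictly weaker than ``the conductor equals $p^nO_K$'' as soon as $p$ splits in $K$ --- and splitting is allowed, since $p$ is only assumed unramified. For instance, if $pO_K=\mathfrak{p}_1\mathfrak{p}_2$, a ray class character of conductor $\mathfrak{p}_1^n\mathfrak{p}_2^{n-1}$ is non-trivial on $U_n^{(1)}$ yet imprimitive modulo $p^n$. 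Consequently your chain of equivalences proves the direction ``$\chi$ primitive $\Rightarrow$ $\psi$ primitive'' (that direction needs only $N(U_n^{(1)})\subseteq V_n^{(1)}$), but it does not prove the converse, which is where the content lies. The repair stays entirely within your toolkit but must be localized at each prime above $p$: primitivity of $\chi$ modulo $p^n$ means that for \emph{every} $\mathfrak{p}\mid p$, the character $\chi_{\rm f}$ is non-trivial on $W_{\mathfrak{p}}:=\ker\big((O_K/p^n)^\times\to(O_K/p^n\mathfrak{p}^{-1})^\times\big)$, and one must show that the norm maps each $W_{\mathfrak{p}}$ --- not merely their product $U_n^{(1)}$ --- onto $V_n^{(1)}$. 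This is exactly where unramifiedness enters: for $n\geq 2$ because the local trace $\mathrm{Tr}_{K_{\mathfrak{p}}/\mathbb{Q}_p}$ induces a surjection $O_K/\mathfrak{p}\to\mathbb{Z}/p\mathbb{Z}$, and for $n=1$ because the residue-field norm $(O_K/\mathfrak{p})^\times\to(\mathbb{Z}/p\mathbb{Z})^\times$ is surjective. With this per-prime refinement, your argument for (2) goes through; as written, it does not.
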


For a cyclotomic character $\chi$ modulo $p^n$, we obtain
$$\tau(\chi)=\chi(\beta p^n\mathfrak{d}_K)\sum_{\alpha\in (O_K/p^nO_K)^\times} \chif(\alpha)\be(\Tr(\alpha \beta))$$
for any $\beta\in p^{-n}\mathfrak{d}_K^{-1}\cap K^+$ with $(\beta p^n\mathfrak{d}_K,p)=1$. Since $p$ is unramified in $K$, we may set $\beta=\frac{1}{p^n}$ and get
$$\tau(\chi)=\chi(\mathfrak{d}_K)\sum_{\alpha\in (O_K/p^nO_K)^\times}\chif(\alpha)\be\lc\frac{\Tr(\alpha)}{p^n}\rc.$$

\subsection{Averages of $L$-values}\label{sec:7}
  
In this subsection, we study averages of the traces of $L$-values after investigating the averages of the characters and Gauss sums.

Let $\mathfrak{m}$ be an integral ideal of $K$.
Let $\eta$ be a totally odd primitive ray class character over $K$ modulo $\mathfrak{m}$.
Let $n_0>1$ be the integer such that 
$$
\mathbb{Q}(\mu_{p^2},\eta)\cap \mathbb{Q}(\chi)=\mathbb{Q}(\mu_{p^{n_0}}).
$$
Let us set $F=\mathbb{Q}(\mu_{p^{n_0}(p-1)})$. In this section, let us set $\chi$ by a totally even primitive cyclotomic character over $K$ modulo $p^n$ and 
$$
G:=\Gal(F(\chi)/F).
$$ 
Due to Proposition \ref{cyclo}, we obtain $\chif= \psi \circ N$ for a totally even primitive Dirichlet character $\psi$ modulo $p^n$. Let $\alpha\in O_p$.
If $n>2n_0+1$, then we have $\psi(N(1+ p^{n-n_0} \alpha)) = 
\psi(1+p^{n-n_0} \Tr (\alpha))$. Hence we have
\begin{equation}\label{def:c_chi}\begin{split}
\chif(1+p^{n-n_0} \alpha) = 
\be\lc \frac{b_\chi \Tr(\alpha)}{p^{n_0} }\rc \mbox{ for a }b_\chi\in (\FZ/p^{n_0} \FZ)^\times \mbox{ and }n>3.
\end{split}
\end{equation}

We define  a {\it  partial Gauss sum} of $\chi$ as
\begin{align*}
\tau_1(\chi):=\chi(\mathfrak{d}_K )\sum_{\substack{\delta\in (O_K/p^nO_K)^\times \\ \delta \equiv  -b_{\chi}(p^{n_0})}} \chif(\delta)\be\lc \frac{\Tr( \delta)}{p^{n}}\rc .
\end{align*}
For an ideal $\fa$ prime to $p$, we define the two types of averages of Gauss sums by 
\begin{align*}
 \tau_{\rm av}(\chi,\fa) &:=\frac{1}{|G|}\sum_{\sigma\in G} \tau_1(\chi^{\sigma})\tau(\overline{\chi}^{\sigma})\chi^{\sigma}(\fa),\\
\widetilde{\tau}_{\rm av}(\chi,\fa) &:=\frac{1}{|G|}\sum_{\sigma \in G}\tau_1(\chi^{\sigma})\overline{\chi}^{\sigma}(\fa).
\end{align*}
Then, we have an explicit asymptotic for an average of $L$-values $L_K(0,\overline{\eta\chi})$:

\begin{thm}\label{theorem2}
Let $\mathfrak{r}$ be a fractional ideal of $K$ whose norm is coprime to $p$, then
\begin{equation}
\label{oe}
\begin{split}
\frac{1}{|G|N(p^n)}&\sum_{\delta\equiv -b_{\chi} (p^{n_0})}\be\lc\frac{\Tr(\delta)}{p^n }\rc\Tr_{F(\eta,\chi)/F(\eta)}\Big{(}\chi(\delta \mathfrak{r}^{-1} )L_K(0,\overline{\eta\chi})\Big{)}
\\& =\frac{\sqrt{d_K}}{(\pi i)^d N(\mathfrak{r})}\sum_{N(\mathfrak{a})=N(\mathfrak{r})}\eta(\mathfrak{a})+o(1)\text{ as }n\rightarrow\infty.
\end{split}
\end{equation}
\end{thm}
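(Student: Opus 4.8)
The plan is to follow the strategy of Jun--Lee--Sun \cite{JLS} for the untwisted case, carrying the tame character $\eta$ through the computation and reducing the left-hand side of \eqref{oe} to the Gauss-sum averages $\tau_{\rm av}$ and $\widetilde{\tau}_{\rm av}$ introduced just above. First I would expand the Galois trace $\Tr_{F(\eta,\chi)/F(\eta)}$ as $\sum_{\sigma\in G}$. Since each $\sigma$ fixes $F(\eta)\supseteq\FQ(\eta)$ we have $\eta^\sigma=\eta$, and since $\sigma$ fixes $\mu_{p^{n_0}}\subset F$, the defining relation \eqref{def:c_chi} gives $b_{\chi^\sigma}=b_\chi$, so the congruence class $\delta\equiv-b_\chi\,(p^{n_0})$ is the same for every $\sigma$. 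Choosing totally positive representatives $\delta$, the compatibility $\chi^\sigma(\delta O_K)=\chi^\sigma_{\rm f}(\delta)$ from Section \ref{sec:ray:class:char} identifies the inner $\delta$-sum, for each fixed $\sigma$, with the partial Gauss sum: $\sum_{\delta\equiv-b_\chi\,(p^{n_0})}\be\lc\frac{\Tr(\delta)}{p^n}\rc\chi^\sigma_{\rm f}(\delta)=\overline{\chi^\sigma(\mathfrak{d}_K)}\,\tau_1(\chi^\sigma)$. Hence the left-hand side equals $\frac{1}{|G|N(p^n)}\sum_{\sigma\in G}\chi^\sigma(\mathfrak{r}^{-1})\,\overline{\chi^\sigma(\mathfrak{d}_K)}\,\tau_1(\chi^\sigma)\,L_K(0,\overline{\eta\chi^\sigma})$.

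Next I would apply the algebraicity relation \eqref{algebraicity} with $\xi=\eta\chi^\sigma$ to rewrite $L_K(0,\overline{\eta\chi^\sigma})=\frac{\sqrt{d_K}}{(\pi i)^d}\,\tau(\overline{\eta\chi^\sigma})\,L_K(1,\eta\chi^\sigma)$, and then expand the conditionally convergent Dirichlet series $L_K(1,\eta\chi^\sigma)=\sum_{\fa}\eta(\fa)\chi^\sigma(\fa)/N(\fa)$ after a suitable truncation. When $\mathfrak{m}$ and $p^n$ are coprime, the Chinese Remainder Theorem factors the Gauss sum as $\tau(\overline{\eta\chi^\sigma})=C_\eta\,\overline{\chi^\sigma}(\mathfrak{m})\,\tau(\overline{\chi^\sigma})$, where $C_\eta=\overline{\eta}(p^nO_K)\tau(\overline{\eta})$ is independent of $\sigma$; crucially this only separates the coprime conductors and does \emph{not} attempt to split the cyclotomic Gauss sum $\tau(\overline{\chi^\sigma})$ into Dirichlet factors (which would need solvability). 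Collecting the $\chi^\sigma$-values through $\overline{\chi^\sigma}(\mathfrak{m})\chi^\sigma(\fa\mathfrak{r}^{-1}\mathfrak{d}_K^{-1})=\chi^\sigma(\fa\mathfrak{r}^{-1}\mathfrak{d}_K^{-1}\mathfrak{m}^{-1})$ and interchanging the $\sigma$- and $\fa$-sums, the left-hand side becomes $\frac{\sqrt{d_K}}{(\pi i)^d}\,C_\eta\cdot\frac{1}{N(p^n)}\sum_{\fa}\frac{\eta(\fa)}{N(\fa)}\,\tau_{\rm av}(\chi,\fa\mathfrak{r}^{-1}\mathfrak{d}_K^{-1}\mathfrak{m}^{-1})$.

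The analytic heart is the asymptotic behaviour of $\tau_{\rm av}(\chi,\fb)/N(p^n)$ as $n\to\infty$. Here I would import the two main ingredients of \cite{JLS}: the non-singular cone decomposition of a fundamental domain for the action of $E_K$ on $K_\FR^+$, which converts the product $\tau_1(\chi^\sigma)\tau(\overline{\chi^\sigma})$ into exponential sums over units (and is where $\widetilde{\tau}_{\rm av}$ enters as an intermediate, via $\tau(\chi^\sigma)\tau(\overline{\chi^\sigma})=\chi^\sigma_{\rm f}(-1)N(p^n)$), and the square-root cancellation for those exponential sums. The cone decomposition isolates a diagonal contribution together with an error governed by the unit sums, and the square-root cancellation forces every off-diagonal term to be $o(N(p^n))$ after averaging over $G$. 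The only modification relative to the untwisted case is the rigid factor $\eta(\fa)$ and the translation of the argument of $\tau_{\rm av}$ by $\mathfrak{d}_K^{-1}\mathfrak{m}^{-1}$; since $\eta$ has fixed conductor $\mathfrak{m}$ coprime to $p$, these are inert in the limit and merely reindex the diagonal.

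Finally I would identify the surviving diagonal: the cone analysis, through orthogonality over $G$ combined with square-root cancellation, selects exactly the ideals $\fa$ with $N(\fa)=N(\mathfrak{r})$, each entering with weight $1/N(\fa)=1/N(\mathfrak{r})$ and character value $\eta(\fa)$, yielding $\frac{\sqrt{d_K}}{(\pi i)^d N(\mathfrak{r})}\sum_{N(\fa)=N(\mathfrak{r})}\eta(\fa)$. \textbf{The main obstacle} I anticipate is twofold. The first difficulty is controlling the truncation of the conditionally convergent series for $L_K(1,\eta\chi^\sigma)$ uniformly in $n$ while simultaneously applying the square-root-cancellation bound, so that the series tail and the off-diagonal error are jointly $o(1)$; this is the step that must be transplanted verbatim from \cite{JLS} and checked to survive the twist. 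The second, and more delicate, point is verifying that the prefactor $C_\eta=\overline{\eta}(p^nO_K)\tau(\overline{\eta})$ cancels exactly against the normalization coming from the diagonal and from the translation by $\mathfrak{d}_K^{-1}\mathfrak{m}^{-1}$, so that the final main term is the clean arithmetic sum $\sum_{N(\fa)=N(\mathfrak{r})}\eta(\fa)$ with no residual Gauss sum of $\eta$ — matching the right-hand side of \eqref{oe}.
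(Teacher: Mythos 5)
Your overall route coincides with the paper's: expand the Galois trace over $G$ (using $\eta^\sigma=\eta$ and $b_{\chi^\sigma}=b_\chi$, both of which you justify correctly), apply the algebraicity relation \eqref{algebraicity} together with the reciprocity of Proposition \ref{reciprocity}, package the $\delta$-sum into $\tau_1(\chi^\sigma)$ and hence into $\tau_{\rm av}$, and let the Jun--Lee--Sun cone decomposition and square-root cancellation carry the asymptotics. The genuine gap is in how you pass from $L_K(1,\eta\chi^\sigma)$ to a finite object. The paper does this with the \emph{approximate functional equation} (Jun--Lee--Sun, Theorem 2.3): an exact identity expressing the twisted $L$-value as a sum of \emph{two} rapidly convergent smoothed sums --- a principal sum weighted by $F_1$, which after averaging over $\sigma$ yields $\sum_{\fa}\eta(\fa)\tau_{\rm av}(\chi,\fa\mathfrak{r}^{-1})F_1\lc N(\fa)/y\rc/N(\fa)$, and a dual sum weighted by $F_2$, in which the root number meets $\tau(\overline{\eta\chi^\sigma})$ and the Gauss sums collapse via $\tau(\xi)\tau(\overline{\xi})=\xi_{\rm f}(-1)N(\fm_\xi)$, leaving precisely the second average $\widetilde{\tau}_{\rm av}(\chi,\fa\mathfrak{r})$. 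You replace this by ``expand the conditionally convergent Dirichlet series after a suitable truncation.'' That step fails as stated: at $s=1$ the series converges only conditionally, the length of sum needed to approximate the $L$-value is governed by the conductor $N(\fm p^n)\to\infty$, and no sharp truncation has a tail that is $o(1)$ uniformly in $n$ --- the tail is exactly what the dual sum encodes, and it is a genuine second contribution that must be displayed and then shown to be $o(1)$ using the $\widetilde{\tau}_{\rm av}$ estimates of Jun--Lee--Sun, Theorem 7.5, not an error to be absorbed.

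A symptom of this omission is your placement of $\widetilde{\tau}_{\rm av}$: you assert it ``enters as an intermediate'' of the cone decomposition via $\tau(\chi^\sigma)\tau(\overline{\chi^\sigma})=\chi^\sigma_{\rm f}(-1)N(p^n)$. That identity is indeed used, but it is applied to the dual sum of the approximate functional equation; in your derivation there is no dual sum at all, so $\widetilde{\tau}_{\rm av}$ has nowhere to arise, and the object you would feed into the cone machinery is not the one the cited estimates control. By contrast, your first two reductions (the identification of the inner $\delta$-sum with $\overline{\chi^\sigma}(\mathfrak{d}_K)\tau_1(\chi^\sigma)$, and the factorization $\tau(\overline{\eta\chi^\sigma})=\tau(\overline{\eta})\,\overline{\eta}_{\rm f}(p^n)\,\overline{\chi^\sigma}(\fm)\,\tau(\overline{\chi^\sigma})$, which is Lemma \ref{gausssum:decomposition} and needs no solvability) are sound, and your second flagged ``obstacle'' --- tracking the $n$-dependent prefactor $\overline{\eta}_{\rm f}(p^n)\tau(\overline{\eta})$ and the translation of the argument of $\tau_{\rm av}$ by $\fm^{-1}\mathfrak{d}_K^{-1}$ through to the main term --- is legitimate bookkeeping rather than a conceptual obstruction. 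But the proof cannot be completed along the lines you describe until the truncation device is replaced by the approximate functional equation; with it, the error analysis is then inherited verbatim from Jun--Lee--Sun, Theorem 7.5, because $|\eta|\leq 1$, which is exactly how the paper argues.
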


\begin{proof} 
Using the functional equation (\ref{algebraicity}), one can replace $L_K(0,\overline{\eta\chi})$ in (\ref{oe}) with 
$$
L_K(0,\overline{\eta\chi})=\frac{\sqrt{d_K}}{(\pi i)^d}\tau(\overline{\eta\chi}  )L_K(1,\eta\chi).
$$ 
Moreover, using Proposition \ref{reciprocity} and the appoximate functional equation (Jun-Lee-Sun \cite[Theorem 2.3]{JLS}), one can write the L.H.S. of \eqref{oe} as
\begin{align*}
&\frac{\sqrt{d_K}}{(\pi i)^d}\frac{1}{N(p^n)}\sum_{\fa} \frac{\eta(\fa)\tau_{\rm av}(\chi,\fa\mathfrak{r}^{-1})}{N(\fa)}F_1\lc\frac{N(\fa)}{y}\rc \\ 
&-\frac{1}{\pi^d N(p^n)}\sum_{\fa}\overline{\eta}(\fa)\widetilde{\tau}_{\rm av}(\chi,\fa \mathfrak{r} )F_2\lc\frac{N(\fa)y}{d_KN(p)^n}\rc.
\end{align*}
As $|\eta|\leq 1$, the estimation on the error terms in the above equations is consistent with that of the average of $L$-values in Jun-Lee-Sun \cite[Theorem 7.5]{JLS}. On the other hand, we observe from the property of the average $\tau_{\rm av}$ and the estimation of a smooth function $F_1$ (see Jun-Lee-Sun \cite[Proposition 7.3, equation (18)]{JLS}) that the main term is given by the norm average:
\begin{equation}\label{congruence:changed}
\frac{\sqrt{d_K}}{(\pi i)^d}\frac{1}{N(\mathfrak{r})}\sum_{N(\mathfrak{a})=N(\mathfrak{r})}\eta(\mathfrak{a}).
\end{equation}
Therefore, we conclude that
\begin{align*}
\frac{1}{|G|N(p^n)}&\sum_{\delta\equiv -b_{\chi} (p^{n_0})}\be\lc\Tr\left(\frac{\delta}{p^n }\right)\rc\Tr_{F(\chi)/F}\Big{(}\chi(\delta \mathfrak{r}^{-1} )L(0,\overline{\chi})\Big{)}\\
& =\frac{\sqrt{d_K}}{(\pi i)^d}
	\frac{ 1 }{{N( \mathfrak{r})}}\sum_{N(\fa)=N(\mathfrak{r})}\eta(\fa)
	+o(1)
	\end{align*}
	as $n\rightarrow \infty$.
	\end{proof}

\begin{remark}
In {\rm Jun-Lee-Sun} \cite{JLS}, $n_0$ is defined by the integer satisfying
$$
\mathbb{Q}(\mu_{p},\eta)\cap \mathbb{Q}(\chi)=\mathbb{Q}(\mu_{p^{n_0}}).
$$
However, under this definition, the average for $\delta\equiv -b_\chi\ (p^{n_0})$ cannot be applied to the proof of {\rm Proposition \ref{prim}}, which is our critical result described in the following section, due to the definition of primitive elements. Thus, we have corrected slightly the definition as follows:
$n_0>1$ is the integer such that
$$
\mathbb{Q}(\mu_{p^2},\eta)\cap \mathbb{Q}(\chi)=\mathbb{Q}(\mu_{p^{n_0}}).
$$
Additionally, we modify the coefficient in the main term of the average in {\rm Jun-Lee-Sun} \cite[Theorem 7.5]{JLS} and record it in {\rm Theorem \ref{theorem2}} because there is a minor error in the reciprocity result {\rm (Jun-Lee-Sun \cite[equation (16)]{JLS})}, which is revised in {\rm Proposition \ref{reciprocity}} of this paper. We also change the definition of the partial Gauss sum $\tau_1$ slightly so that $d_K$ does not appear in the congruence conditions in {\rm equation (\ref{congruence:changed})} in contrast with that of {\rm Jun-Lee-Sun} \cite[Theorem 7.5]{JLS}.
\end{remark}

\section{Generation by cyclotomic Hecke {\it L}-values }\label{final}
In this section, we prove that the field generated by a single cyclotomic Hecke $L$-value contains norm averages of a tame character $\eta$.
Here recall that $p>2$ is unramified in $K$, $\eta$ is a totally odd primitive ray class character over $K$ modulo $\mathfrak{m}$, and $F=\FQ(\mu_{p^{n_0}(p-1)})$.
\begin{defn}
Let us define the norm average function $c_\eta:\mathbb{Z}_{>0}\to\mathbb{Z}[\eta]$ attached to $\eta$ as follows:
$$
c_\eta(m):=\sum_{ N(\mathfrak{a})=m }  \eta(\mathfrak{a}).
$$
Let us provide the following notations for simplicity:
\begin{itemize}
\item[(1)] Let us denote by $c^\prime_\eta$ the restriction of the function $c_\eta$ to the set of positive integers coprime to $p$.
\item[(2)] Let us denote by $\Xi_{K,p}$ the set of totally even primitive cyclotomic characters over $K$ modulo a $p$-power.
\end{itemize}
\end{defn}
Based on the estimation of Theorem \ref{theorem2}, we obtain the following generation result:
\begin{prop}\label{hecke:field:generation}
For almost all $\chi\in\Xi_{K,p}$,
$$F(L_K(0,\eta\chi),\chi)=F(c_\eta^\prime,\chi).$$
\end{prop}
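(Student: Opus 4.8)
The plan is to recast the field identity as a comparison of stabilizers and then treat the two inclusions very asymmetrically. Write $E:=F\big(L_K(0,\eta\chi),\chi\big)$ and $M:=F(c_\eta^\prime,\chi)$. Both lie between $F(\chi)$ and $F(\eta,\chi)$, since $L_K(0,\eta\chi)\in\FQ(\eta\chi)$ and $c_\eta^\prime\in\FQ(\eta)$. As $F(\eta,\chi)/\FQ$ is abelian I can use Galois descent over $F(\chi)$: with $H:=\Gal(F(\eta,\chi)/F(\chi))$, every $\sigma\in H$ fixes $\chi$, so by the reciprocity (\ref{reci}) one has $L_K(0,\eta\chi)^\sigma=L_K(0,\eta^\sigma\chi)$ and $c_\eta(m)^\sigma=c_{\eta^\sigma}(m)$. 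Hence $E=F(\eta,\chi)^{H_0}$ and $M=F(\eta,\chi)^{H_1}$, where
$$H_0=\{\sigma\in H:L_K(0,\eta^\sigma\chi)=L_K(0,\eta\chi)\},\qquad H_1=\{\sigma\in H:c_{\eta^\sigma}^\prime=c_\eta^\prime\},$$
and it suffices to prove $H_0=H_1$ for almost all $\chi$.

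The inclusion $H_1\subseteq H_0$ (equivalently $E\subseteq M$) is elementary and needs no averaging. Since $\chi=\psi\circ N$ for a primitive even Dirichlet character $\psi$ modulo $p^n$ by Proposition \ref{cyclo}, expanding the Hecke $L$-series and grouping integral ideals by their norm gives
$$L_K(s,\eta\chi)=\sum_{\fa}\frac{\eta(\fa)\psi(N(\fa))}{N(\fa)^s}=\sum_{p\nmid m}\frac{\psi(m)c_\eta(m)}{m^s}=\sum_{p\nmid m}\frac{\psi(m)c_\eta^\prime(m)}{m^s}.$$
Thus the whole function $L_K(s,\eta\chi)$, in particular its value at $s=0$, depends on $\eta$ only through $c_\eta^\prime$, so $c_{\eta^\sigma}^\prime=c_\eta^\prime$ forces $L_K(0,\eta^\sigma\chi)=L_K(0,\eta\chi)$, i.e. $\sigma\in H_0$.

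The reverse inclusion $H_0\subseteq H_1$ is the heart of the matter, and here I would invoke Theorem \ref{theorem2}. For almost all $\chi$ the fields $F(\eta)$ and $F(\chi)$ are linearly disjoint over $F$ (this is essentially the role of the normalization defining $n_0$ and $F$), so that restriction identifies $H$ with $\Gal(F(\eta)/F)$ — whence the conjugates $\eta^\sigma$ run through the fixed, $\chi$-independent $\Gal(F(\eta)/F)$-orbit of $\eta$, each member again totally odd and primitive modulo $\fm$ — while the trace $\Tr_{F(\eta,\chi)/F(\eta)}$ entering (\ref{oe}) is taken over a group identified with $G=\Gal(F(\chi)/F)$. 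Arguing by contraposition, suppose $\sigma\in H_0\setminus H_1$, so $\eta^\prime:=\eta^\sigma$ is one of these finitely many conjugates with $c_{\eta^\prime}^\prime\neq c_\eta^\prime$; fix $\mathfrak{r}$ with $N(\mathfrak{r})=m_0$ coprime to $p$ and $c_{\eta^\prime}(m_0)\neq c_\eta(m_0)$. Let $A_n^{\eta}(\mathfrak{r})$ denote the left-hand side of (\ref{oe}). Expanding its inner trace as $\sum_{\tau\in G}\chi^\tau(\delta\mathfrak{r}^{-1})L_K(0,\overline{\eta}\,\overline{\chi}^\tau)$ and propagating the single identity $L_K(0,\eta^\prime\chi)=L_K(0,\eta\chi)$ through suitable automorphisms of $\overline{\FQ}/\FQ$ — using $F(\eta^\prime)=F(\eta)$ (as $F(\eta)/F$ is Galois) together with linear disjointness to realize each $\overline{\chi}^\tau$ as $\chi^\rho$ for a $\rho$ acting as complex conjugation on $F(\eta)$ — yields $L_K(0,\overline\eta\,\overline\chi^\tau)=L_K(0,\overline{\eta^\prime}\,\overline\chi^\tau)$ for every $\tau\in G$, hence $A_n^{\eta}(\mathfrak{r})=A_n^{\eta^\prime}(\mathfrak{r})$. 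Subtracting the two instances of Theorem \ref{theorem2} then gives
$$0=\frac{\sqrt{d_K}}{(\pi i)^d m_0}\big(c_\eta(m_0)-c_{\eta^\prime}(m_0)\big)+o(1)\qquad(n\to\infty).$$

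This is the desired contradiction and also isolates the main obstacle. The leading term on the right is a fixed nonzero transcendental number, so the displayed identity can persist only for finitely many $n$; thus for each of the finitely many offending conjugates $\eta^\prime$ the set of $\chi$ with $L_K(0,\eta^\prime\chi)=L_K(0,\eta\chi)$ is finite, and discarding this finite union — together with the sparse set of $\chi$ for which the linear disjointness fails — leaves $H_0\subseteq H_1$, hence $E=M$, for almost all $\chi$. The two delicate points I expect to consume most of the work are: (i) the book-keeping that upgrades the \emph{single} equality $L_K(0,\eta^\prime\chi)=L_K(0,\eta\chi)$ to the equality $A_n^{\eta}(\mathfrak{r})=A_n^{\eta^\prime}(\mathfrak{r})$ of the full averaged expressions, which hinges on matching the Galois groups governing $\Tr_{F(\eta,\chi)/F(\eta)}$ and $\Tr_{F(\eta^\prime,\chi)/F(\eta^\prime)}$ and is precisely where linear disjointness for almost all $\chi$ is indispensable; and (ii) the fact that Theorem \ref{theorem2} controls these averages only asymptotically, so the contradiction materializes only as the conductor $p^n$ grows — which is exactly why the conclusion is for \emph{almost all}, rather than all, $\chi$.
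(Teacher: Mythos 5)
Your proposal is correct and follows essentially the same route as the paper's own proof: the inclusion $F(L_K(0,\eta\chi),\chi)\subseteq F(c_\eta^\prime,\chi)$ comes from the expansion $L_K(s,\eta\chi)=\sum_{p\nmid m}\psi(m)c_\eta(m)m^{-s}$ together with the reciprocity law (\ref{reci}), and the reverse inclusion comes from feeding a coincidence $L_K(0,\eta^\sigma\chi)=L_K(0,\eta\chi)$ persisting over infinitely many $\chi$ into the asymptotic average of Theorem \ref{theorem2} to force $c_{\eta^\sigma}^\prime=c_\eta^\prime$. The only real difference is bookkeeping for the ``almost all'' quantifier --- you run a contraposition over the finitely many offending conjugates $\eta^\sigma$, while the paper pigeonholes the characters $\chi$ into sets $S_H$ indexed by subgroups $H$ of $\Gal(F(\eta)/F)$ --- but the underlying mechanism is identical.
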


\begin{proof}
Let $\chi\in\Xi_{K,p}$ of modulus $p^n$ with sufficiently large $n$.
Let us denote by $L_\chi:=F\big(L_K(0,\eta\chi)\big)$ in this proof.
If $\sigma\in\mathrm{Gal}\big(F(\eta,\chi)/F(c_\eta^\prime,\chi)\big)$, then we obtain from the reciprocity (\ref{reci}) that
$$
L_K(0,\eta\chi)^\sigma=L_K(0,\eta^\sigma\chi)=L_K(s,\eta^\sigma\chi)|_{s=0}=L_K(s,\eta\chi)|_{s=0}=L_K(0,\eta\chi),
$$
where the third equality comes from the facts that
$$
L_K(s,\eta\chi)=\sum_{m>0}\frac{c_\eta(m)\psi(m)}{m^s}
$$
and $c_\eta^\prime=(c_{\eta}^\prime)^\sigma=c_{\eta^\sigma}^\prime$.
From this, we conclude that $L_\chi\subset F(c_\eta^\prime,\chi)$ since $\sigma$ is arbitrary.

For a normal subgroup $H$ of $\mathrm{Gal}(F(\eta)/F)$, let us denote by
$$
S_H:=\{\chi\in\Xi_{K,p}\mid L_\chi(\chi)\cap F(\eta)=F(\eta)^H \}\subset \Xi_{K,p}.
$$ 
Due to the infiniteness of the set $\Xi_{K,p}$ and the finiteness of the dimension of $F(\eta)$ over $F$, there exists a normal subgroup $H$ of $\mathrm{Gal}(F(\eta)/F)$ such that $S_H$ is infinite. 
For $\chi\in S_H$, let $\sigma$ be an element of the Galois group $\mathrm{Gal}\big(F(\eta,\chi)/L_\chi(\chi)\big)$, which is isomorphic to $H$ as $L_\chi(\chi)F(\eta)=F(\eta,\chi)$, so $\mathrm{Gal}\big(F(\eta,\chi)/L_\chi(\chi)\big)$ is independent on $\chi$. 
Then, by the reciprocity (\ref{reci}), we have
\begin{equation}\label{cyclotomic:proof:equation}
L_K(0,\eta\chi)=L_K(0,\eta\chi)^\sigma=L_K(0,\eta^\sigma\chi)
\end{equation}
for $\chi\in S_H$.
Note that $F(\eta)=F(\eta^\sigma)$.
Thus, by taking the average to equation (\ref{cyclotomic:proof:equation}), taking the limit for $\chi\in S_H$ (which is possible as $\Xi_{K,p}$ is a line of characters and $|S_H|=\infty$) and applying Theorem \ref{theorem2}, we prove that
$$
c_\eta^\prime=c_{\eta^\sigma}^\prime=(c_{\eta}^\prime)^\sigma.
$$
Since $\sigma$ is arbitrary, we conclude that $F(c_\eta^\prime)\subset L_\chi(\chi)$. So we are done.
\end{proof}

\begin{defn}
We call an ideal $\mathfrak{r}$ of $K$ {\it primitive} if $\langle N(\mathfrak{r}) \rangle\neq 1$. 
\end{defn}
If $\mathfrak{r}$ is primitive, then for any $\chi\in\Xi_{K,p}$ of modulus $p^n$ with $n>n_0$, we have 
$$
F\big(\chi(\mathfrak{r})\big)=F(\chi).
$$
We have the following crucial non-vanishing lemma, whose proof is provided in the next section:
\begin{lem}\label{nonvanishing:primitive}
If $K$ is solvable and $p$ does not divide $N(\mathfrak{m})$, then $c_\eta(m)\neq 0$ for some integer $m$ with $\langle m\rangle\neq 1$.
\end{lem}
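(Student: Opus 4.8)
The plan is to argue by contradiction and reduce the statement to the one-element case of Proposition \ref{nonvanishing:averagesum:characters}, which is available under exactly the standing hypotheses that $K$ is solvable and $p\nmid N(\mathfrak{m})$. Recall that for a positive integer $m$ coprime to $p$ the condition $\langle m\rangle\neq 1$ is precisely the defining condition $m^{p-1}\not\equiv 1\pmod{p^2}$ that cuts out the domain of the restricted norm average $c_\eta^\circ$. Hence the assertion of the lemma is equivalent to $c_\eta^\circ\not\equiv 0$, and I would begin by assuming the opposite, namely that $c_\eta(m)=0$ for every $m$ with $\langle m\rangle\neq 1$.

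Under this assumption I would apply Proposition \ref{nonvanishing:averagesum:characters} to the singleton set $A=\{\eta\}$ with coefficient $a_\eta=1$. This is legitimate because $\eta$ is by hypothesis a totally odd primitive ray class character modulo $\mathfrak{m}$, so $A$ is an admissible finite set. The required input $\sum_{\eta\in A}a_\eta c_\eta(m)=c_\eta(m)=0$ for all $m$ with $m^{p-1}\not\equiv 1\pmod{p^2}$ is exactly our contradiction hypothesis, so the proposition yields $\tau(\eta)\,\eta_{\rm f}(p^2)=0$.

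It then remains to check that both factors are nonzero, which closes the contradiction. Since $\eta$ is primitive its Gauss sum satisfies $|\tau(\eta)|^2=N(\mathfrak{m})>0$, so $\tau(\eta)\neq 0$. Moreover, because $p\nmid N(\mathfrak{m})$ the rational integer $p^2$ is a unit modulo $\mathfrak{m}$, so $\eta_{\rm f}(p^2)$ is a root of unity and in particular nonzero. Thus $\tau(\eta)\,\eta_{\rm f}(p^2)\neq 0$, contradicting the displayed identity; therefore $c_\eta(m)\neq 0$ for some $m$ with $\langle m\rangle\neq 1$, as claimed.

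The genuine difficulty is entirely absorbed into Proposition \ref{nonvanishing:averagesum:characters}: once its conclusion is granted, the lemma is only a short non-vanishing verification. Accordingly, the main obstacle I anticipate lies not in this deduction but upstream, in establishing that proposition itself — that is, in passing from the vanishing of the twisted partial Hecke $L$-functions, through the duality and the analytic behavior of $L_K^\circ(s,\ast)$, to the twisted average of Gauss sums, and then in proving that this average is nonzero by means of the decomposition $\tau(\phi\circ N_{K/\mathbb{Q}})=C\,\phi(d_K)\,\tau(\phi)^{[K:\mathbb{Q}]}$ together with the non-vanishing of the resulting hyper Kloosterman sum $\mathrm{Kl}_d(r;p^2)$ with $r\equiv 1\pmod p$, where the solvability of $K$ enters decisively.
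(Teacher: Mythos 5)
Your proof is correct and is essentially identical to the paper's own argument: assume $c_\eta$ vanishes on all $m$ with $\langle m\rangle\neq 1$, apply Proposition \ref{nonvanishing:averagesum:characters} with $A=\{\eta\}$ to get $\tau(\eta)\eta_{\rm f}(p^2)=0$, and derive a contradiction. The only difference is that you spell out the non-vanishing of $\tau(\eta)$ (via $|\tau(\eta)|^2=N(\mathfrak{m})$ for primitive $\eta$) and of $\eta_{\rm f}(p^2)$ (via $p\nmid N(\mathfrak{m})$), which the paper leaves implicit.
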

By using this important lemma, we obtain the following non-vanishing result:

\begin{prop} \label{prim}
Let us assume that $K$ is solvable and $p$ does not divide $N(\mathfrak{m})$.
Then, there exists a primitive element $\mathfrak{r}$ such that
$$
\Tr_{F(\eta,\chi)/F(\eta)}\Big{(}\chi(\mathfrak{r})L_K(0,\overline{\eta\chi})\Big{)}\not = 0
$$ 
for almost all $\chi\in\Xi_{K,p}$. 
\end{prop}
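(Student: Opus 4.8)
The plan is to derive the nonvanishing of a single trace from the averaged asymptotic of Theorem \ref{theorem2}, using Lemma \ref{nonvanishing:primitive} to guarantee that the main term does not vanish. First I would apply Lemma \ref{nonvanishing:primitive} to obtain an integer $m$, necessarily coprime to $p$ with $\langle m\rangle\neq 1$, for which $c_\eta(m)\neq 0$. Since $c_\eta(m)=\sum_{N(\mathfrak{a})=m}\eta(\mathfrak{a})$ is then a nonempty sum, there is an integral ideal $\mathfrak{r}_0$, coprime to $\mathfrak{m}$, with $N(\mathfrak{r}_0)=m$; as $N(\mathfrak{r}_0)$ is coprime to $p$ and $\langle N(\mathfrak{r}_0)\rangle\neq 1$, the ideal $\mathfrak{r}_0$ is primitive. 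Substituting $\mathfrak{r}=\mathfrak{r}_0$ into Theorem \ref{theorem2}, the right-hand side equals $\frac{\sqrt{d_K}}{(\pi i)^d m}c_\eta(m)+o(1)$, a nonzero constant up to $o(1)$. Hence the weighted average on the left-hand side is nonzero for all sufficiently large conductor exponents $n$, that is, for almost all $\chi\in\Xi_{K,p}$.

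Next I would descend from the average to an individual term. That left-hand side is a finite sum over $\delta\equiv -b_\chi\pmod{p^{n_0}}$ of the quantities $\be(\Tr(\delta)/p^n)\,\Tr_{F(\eta,\chi)/F(\eta)}(\chi(\delta\mathfrak{r}_0^{-1})L_K(0,\overline{\eta\chi}))$, so its nonvanishing forces some summand $\Tr_{F(\eta,\chi)/F(\eta)}(\chi(\delta\mathfrak{r}_0^{-1})L_K(0,\overline{\eta\chi}))$ to be nonzero. The feature that makes the resulting ideal admissible is that $n_0\geq 2$: the congruence $\delta\equiv -b_\chi\pmod{p^{n_0}}$ already determines $\delta$, hence $N(\delta)\equiv(-b_\chi)^{d}$, modulo $p^2$, and primitivity depends only on the class modulo $p^2$. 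Therefore every ideal $\delta\mathfrak{r}_0^{-1}$ occurring in the sum lies in a single primitivity class, and whenever that common class is primitive the nonzero summand exhibits a primitive $\mathfrak{r}$ with $\Tr_{F(\eta,\chi)/F(\eta)}(\chi(\mathfrak{r})L_K(0,\overline{\eta\chi}))\neq 0$.

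The hard part will be the final passage, which is twofold. First one must ensure that the common primitivity class is genuinely primitive: this amounts to $(-b_\chi)^{d(p-1)}\not\equiv m^{p-1}\pmod{p^2}$, and since $b_\chi$ varies with $\chi$ a single $m$ cannot be expected to satisfy this for every $\chi$. I would absorb this by allowing an auxiliary prime, replacing $\mathfrak{r}_0$ by $\mathfrak{r}_0\mathfrak{q}$ with $\mathfrak{q}\nmid p\mathfrak{m}$, so as to move the norm class off the forbidden value — precisely the extra flexibility recorded by the two nonvanishing conclusions (for $p^2$ and for $p^2q$) in Proposition \ref{nonvanishing:averagesum:characters}. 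Second, and more delicate, the ideal produced so far depends a priori on $\chi$, since its norm is only pinned down modulo $p^n$, whereas the statement demands one primitive $\mathfrak{r}$ valid for almost all $\chi$. To make $\mathfrak{r}$ uniform I would use that $\chi(\mathfrak{r})=\psi(N(\mathfrak{r}))$ depends only on $N(\mathfrak{r})$ modulo the conductor, that $b_\chi$ takes only finitely many values modulo $p^2$ along the line $\Xi_{K,p}$, and the action of $G=\Gal(F(\chi)/F)$ permuting the conjugates $\chi^\sigma(\mathfrak{r})$, in order to fix $\mathfrak{r}$ once and for all after discarding a finite set of $\chi$. This uniformization is the step I expect to require the most care.
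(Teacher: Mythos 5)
Your skeleton is the paper's: invoke Lemma \ref{nonvanishing:primitive}, feed a suitable ideal into Theorem \ref{theorem2}, and extract one nonzero summand, observing (correctly) that all ideals $\delta\mathfrak{r}^{-1}$ in the sum lie in a single primitivity class because $n_0>1$. Your diagnosis of the obstruction is also correct: one needs $\langle N(\delta\mathfrak{r}^{-1})\rangle=\langle b_\chi\rangle^{d}\langle N(\mathfrak{r})\rangle^{-1}\neq 1$, and a single $m$ from the lemma cannot satisfy this for every $\chi$. The gap is in your fix. Replacing $\mathfrak{r}_0$ by $\mathfrak{r}_0\mathfrak{q}$ does move the norm class, but it also changes the main term of Theorem \ref{theorem2} from $c_\eta(m)$ to $c_\eta\big(mN(\mathfrak{q})\big)=c_\eta(m)\,c_\eta\big(N(\mathfrak{q})\big)$, and $c_\eta\big(N(\mathfrak{q})\big)$ can perfectly well vanish (e.g.\ $K$ real quadratic, $q$ split, with $\eta$ taking opposite values at the two primes above $q$). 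Proposition \ref{nonvanishing:averagesum:characters} does not rescue this: it controls the aggregate quantities $\sum_\eta a_\eta\tau(\eta)\eta_{\rm f}(p^2)$ and $\sum_\eta a_\eta\tau(\eta)\eta_{\rm f}(p^2q)$, not the nonvanishing of $c_\eta$ in a prescribed class modulo $p^2$. The paper's resolution needs no auxiliary prime: it splits into cases according to $b_\chi$. If the ideal $b_\chi O_K$ is itself primitive, take $\mathfrak{r}_1=O_K$; then the main term is $c_\eta(1)=1\neq 0$ and $\langle N(\delta\mathfrak{r}_1^{-1})\rangle=\langle b_\chi\rangle^{d}\neq 1$ automatically. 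If $b_\chi O_K$ is imprimitive, take $\mathfrak{r}_1$ of norm $m$ from the lemma; then the main term is $c_\eta(m)\neq0$ and $\langle N(\delta\mathfrak{r}_1^{-1})\rangle=\langle m\rangle^{-1}\neq1$. Switching the reference norm class between $1$ and $m$ according to $b_\chi$ is exactly what lets the two requirements (nonzero main term, primitive quotient) hold simultaneously; this is the idea missing from your argument.

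Your second concern, producing one $\mathfrak{r}$ uniform in $\chi$, is not needed and your sketched repair would not work anyway: $\chi(\mathfrak{r})=\psi\big(N(\mathfrak{r})\big)$ depends on $N(\mathfrak{r})$ modulo the full conductor $p^n$, which grows along $\Xi_{K,p}$, so no finite discarding of characters pins down a single $\mathfrak{r}$ via classes modulo $p^2$. The paper's proof only produces $\mathfrak{r}=\delta\mathfrak{r}_1^{-1}$ depending on $\chi$, and that weaker statement is all that is ever used: in the proof of Proposition \ref{cyclotomic:generation}, once $L_\chi(\eta)\cap F(\chi)\subsetneq F(\chi)$ the trace $A_\chi(\mathfrak{r})$ vanishes for \emph{every} primitive $\mathfrak{r}$, so a $\chi$-dependent witness already yields the contradiction. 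Read the proposition as ``for almost all $\chi$ there exists a primitive $\mathfrak{r}$''; with that reading, the case-split argument above closes the proof and your uniformization step can be dropped entirely.
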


\begin{proof}
Let $\chi\in\Xi_{K,p}$ of modulus $p^n$. Note that both taking the inverse and the multiplication by an imprimitive element give permutations on the set of primitive elements. Thus, if $b_\chi$ is imprimitive, then there is a primitive element $\mathfrak{r}_0$ such that $c_\eta\big(N(b_\chi\mathfrak{r}_0^{-1})\big)\neq 0$ by Lemma \ref{nonvanishing:primitive}. 
Note that this lemma needs the solvability of $K$ and the $p$-indivisibility of $N(\mathfrak{m})$.
Choose $\mathfrak{r}_1$ such that
$$\mathfrak{r}_1=\begin{cases}O_K&\text{if $b_{\chi}$\text{ is primitive,}}\\ b_\chi\mathfrak{r}_0^{-1} &\text{otherwise} \end{cases}.$$
Then, $\delta\mathfrak{r}_1^{-1}$ is a primitive element for any $\delta\equiv -b_\chi\ (p^{n_0})$ as $n_0>1$.
Due to Theorem \ref{theorem2}, we have 
\begin{equation*}
\begin{split}
\frac{1}{|G| N(p)^{n}}&\sum_{\delta \equiv -b_{\chi}(p^{n_0})}\be\lc \Tr\lc\frac{\delta  }{p^{n}}\rc \rc 
\Tr_{F(\eta,\chi)/F(\eta)}\Big{(}\chi(\delta\mathfrak{r}_1^{-1})L_K(0,\overline{\eta\chi})\Big{)}\\
&= \frac{\sqrt{d_K}}{(\pi i)^d N(\mathfrak{r}_1)}c_\eta\big(N(\mathfrak{r}_1)\big)+o(1) .
\end{split}
\end{equation*}
Note that the main term of the last expression is nonzero due to our choice on $\mathfrak{r}_1$ and the fact that $c_\eta(1)=\eta(O_K)=1$. Thus, if $n$ is sufficiently large then there is at least one $\delta\equiv -b_{\chi}\ (p^{n_0})$ such that 
$$\Tr_{F(\eta,\chi)/F(\eta)}\Big{(}\chi(\delta\mathfrak{r}_1^{-1})L_K(0,\overline{\eta\chi}) \rc\not=0.$$
This completes the proof.
\end{proof} 

From this, we observe that a critical $L$-value generates a cyclotomic field $F(\chi)$:
\begin{prop}\label{cyclotomic:generation} 
Let us assume that $K$ is solvable and $p$ does not divide $N(\mathfrak{m})$.
For almost all $\chi\in\Xi_{K,p}$,
$$
F(\chi)\subset F\big(L_K(0,\eta\chi)\big).
$$
\end{prop}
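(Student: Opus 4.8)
The plan is to deduce Proposition~\ref{cyclotomic:generation} from the non-vanishing established in Proposition~\ref{prim}, exploiting the fact that $F(\chi(\mathfrak{r}))=F(\chi)$ whenever $\mathfrak{r}$ is a primitive element and $\chi\in\Xi_{K,p}$ has modulus $p^n$ with $n>n_0$. First I would fix $\chi\in\Xi_{K,p}$ of sufficiently large modulus $p^n$, and let $\mathfrak{r}$ be the primitive element furnished by Proposition~\ref{prim}, so that the trace
$$
t:=\Tr_{F(\eta,\chi)/F(\eta)}\Big(\chi(\mathfrak{r})L_K(0,\overline{\eta\chi})\Big)
$$
is nonzero. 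Writing $L_\chi:=F\big(L_K(0,\eta\chi)\big)$, the goal is to show $F(\chi)\subset L_\chi$, equivalently that every $\sigma\in\Gal(\overline{\FQ}/\FQ)$ fixing $L_\chi$ also fixes $F(\chi)$.

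The key step is a Galois-theoretic argument on how such a $\sigma$ acts on $t$. Observe that $L_K(0,\overline{\eta\chi})$ lies in $F(\eta,\chi)$, and the trace $t$ collapses the $\eta$-part while retaining the $\chi$-dependence. I would argue that a $\sigma$ fixing $L_\chi$, and hence (after the reciprocity law~(\ref{reci}) is invoked as in the proof of Proposition~\ref{hecke:field:generation}) fixing $L_K(0,\eta\chi)$ and its conjugates entering the trace, must act on $t$ only through its action on $\chi(\mathfrak{r})$. Concretely, since $\sigma$ permutes the summands $\chi^{\tau}(\mathfrak{r})L_K(0,\overline{\eta\chi^{\tau}})$ that make up the trace over $F(\eta,\chi)/F(\eta)$, and fixes the $L$-value factor up to the permutation induced on $\Gal(F(\eta,\chi)/F(\eta))$, one finds
$$
t^\sigma=\chi(\mathfrak{r})^{\sigma}\chi(\mathfrak{r})^{-1}\,t
$$
or an analogous relation showing that the $\sigma$-action scales $t$ by the ratio $\chi(\mathfrak{r})^{\sigma}/\chi(\mathfrak{r})$. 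Because $t\neq 0$, if $\sigma$ also fixes $t$ (which it will, as $t$ is built from $\chi(\mathfrak{r})$ and quantities in $L_\chi$), then $\chi(\mathfrak{r})^\sigma=\chi(\mathfrak{r})$, forcing $\sigma$ to fix $F(\chi(\mathfrak{r}))=F(\chi)$.

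I expect the main obstacle to be making the bookkeeping of the trace under the $\sigma$-action rigorous: one must track precisely how $\sigma\in\Gal(F(\eta,\chi)/F(\chi))$ versus the full absolute Galois action interacts with the relative trace $\Tr_{F(\eta,\chi)/F(\eta)}$, and confirm that the $\eta$-twisting does not contribute extra conjugates that spoil the scaling relation. The cleanest route is to factor the argument through the tower $F\subset F(\chi)\subset F(\eta,\chi)$ and $F\subset F(\eta)\subset F(\eta,\chi)$, using that $F(\eta)$ and $F(\chi)$ are linearly disjoint over $F$ for almost all $\chi$ (since $\Xi_{K,p}$ is an infinite line of characters and $n>n_0$ guarantees the intersection is exactly $F$). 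Under this disjointness, $\Gal(F(\eta,\chi)/F(\eta))\cong\Gal(F(\chi)/F)$, so that $\sigma$ fixing $L_\chi\supseteq$ the $F(\eta)$-trace forces, via $t\neq 0$, the triviality of $\sigma$ on $F(\chi)$. Assembling these pieces yields $F(\chi)\subset L_\chi$ for almost all $\chi\in\Xi_{K,p}$, which is the assertion.
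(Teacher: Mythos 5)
Your proposal starts from the same two ingredients as the paper's proof --- the non-vanishing trace of Proposition \ref{prim} and the fact that $F(\chi(\mathfrak{r}))=F(\chi)$ for a primitive $\mathfrak{r}$ --- but the step that is supposed to convert $t\neq 0$ into Galois information is wrong. The relation $t^\sigma=\chi(\mathfrak{r})^\sigma\chi(\mathfrak{r})^{-1}t$ cannot hold in general: if $\sigma$ fixes the $L$-value factors and permutes the summands of the trace, then $t^\sigma=t$ outright; and if $\sigma$ acts on roots of unity by $\zeta\mapsto\zeta^a$, then the summand indexed by $\tau\in\Gal(F(\eta,\chi)/F(\eta))$ is multiplied by $\chi^\tau(\mathfrak{r})^{a-1}$, a factor that depends on $\tau$, so no common scalar can be pulled out of the sum. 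More fundamentally, $t=\Tr_{F(\eta,\chi)/F(\eta)}\big(\chi(\mathfrak{r})L_K(0,\overline{\eta\chi})\big)$ lies in $F(\eta)$, so every $\sigma$ that is trivial on $F(\eta)$ fixes $t$ automatically, regardless of how it acts on $F(\chi)$. Hence the implication ``$\sigma$ fixes $t\neq0$, therefore $\chi(\mathfrak{r})^\sigma=\chi(\mathfrak{r})$'' is unobtainable by any argument of this shape; your justification that $\sigma$ fixes $t$ ``as $t$ is built from $\chi(\mathfrak{r})$ and quantities in $L_\chi$'' is moreover circular, since whether $\sigma$ fixes $\chi(\mathfrak{r})$ is exactly what is at stake. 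The linear disjointness of $F(\eta)$ and $F(\chi)$ over $F$ is true but does not repair this: the trace $t$ still collapses into $F(\eta)$ and carries no memory of the action on $F(\chi)$.

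What the non-vanishing actually buys --- and this is how the paper uses it --- is a contradiction against the \emph{vanishing} of an inner trace. Suppose $F(\chi)\cap L_\chi(\eta)\subsetneq F(\chi)$, where $L_\chi=F\big(L_K(0,\eta\chi)\big)$. Since $F(\eta,\chi)=F(\chi)L_\chi(\eta)$, transitivity of the trace gives $t=\Tr_{L_\chi(\eta)/F(\eta)}\big(L_K(0,\overline{\eta\chi})\,\Tr_{F(\chi)L_\chi(\eta)/L_\chi(\eta)}(\chi(\mathfrak{r}))\big)$, and the inner trace equals $\Tr_{F(\chi)/F(\chi)\cap L_\chi(\eta)}\big(\chi(\mathfrak{r})\big)$, which is $0$: by primitivity $\chi(\mathfrak{r})$ generates $F(\chi)$ over $F$, and the trace of such an element to any proper intermediate field of the cyclic $p$-power extension $F(\chi)/F$ vanishes (it is a sum of a nontrivial coset of $p$-power roots of unity). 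Thus $t=0$, contradicting Proposition \ref{prim}, and one concludes $F(\chi)\subset L_\chi(\eta)$ for almost all $\chi$. Note that this only yields containment in $L_\chi(\eta)$, so a final descent step is still required (and is absent from your sketch): since $F(\eta)=F(\mu_M)$ with $p\nmid M$, one gets $F(\chi)\subset L_\chi(\mu_M)\cap F(\chi)=L_\chi$. If you want to keep a Galois-theoretic phrasing, the repair is to run your argument contrapositively --- assume the containment fails and show the trace is then forced to vanish --- which is precisely the paper's proof.
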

\begin{proof}
Let us denote by 
$
L_\chi:=F\big(L_K(0,\eta\chi)\big)
$
and
$$
A_\chi(\mathfrak{r}):=\Tr_{F(\eta,\chi)/F(\eta)}\Big{(}\chi(\mathfrak{r})L_K(0,\overline{\eta\chi})\Big{)}
$$
for $\chi\in\Xi_{K,p}$ in this proof. Note that there is a primitive element $\mathfrak{r}$ such that 
$$
A_\chi(\mathfrak{r})\neq 0
$$
for almost all $\chi\in\Xi_{K,p}$ by Proposition \ref{prim}.
We can rewrite $A_\chi(\mathfrak{r})$ as follows:
$$
A_\chi(\mathfrak{r})=\Tr_{F(\chi)L_\chi(\eta)/F(\eta)}\Big{(}\chi(\mathfrak{r})L_K(0,\overline{\eta\chi})\Big{)}.
$$
Let us assume the contrary that $L_\chi(\eta)\cap F(\chi)\subsetneq F(\chi)$ for infinitely many $\chi$. Let us denote by $S$ the infinite set of such $\chi$'s. From the transitivity of the trace and the fact that $F(\eta)\subset L_\chi(\eta)\subset F(\chi)L_\chi(\eta)$, we may express $A_\chi(\mathfrak{r})$ as 
\begin{equation*}
A_\chi(\mathfrak{r})=\Tr_{L_\chi(\eta)/F(\eta)}\lc  L_K(0,\overline{\eta\chi})\Tr_{F(\chi)L_\chi(\eta)/L_\chi(\eta)}\big(\chi(\mathfrak{r}) \big)\rc.
\end{equation*}
Since $\mathfrak{r}$ is primitive and $L_\chi(\eta)\cap F(\chi)\subsetneq F(\chi)$ for $\chi\in S$,
$$
\Tr_{F(\chi)L_\chi(\eta)/L_\chi(\eta)}\big(\chi(\mathfrak{r})\big)=\Tr_{F(\chi)/L_\chi(\eta)\cap F(\chi)}\big(\chi(\mathfrak{r})\big)=0
$$
for $\chi\in S$. Therefore, $A_\chi(\mathfrak{r})=0$ for infinitely many $\chi$, which is a contradiction. Thus, we obtain that $F(\chi)\subset L_\chi(\eta)$ for almost all $\chi\in\Xi_{K,p}$.  Note that $F(\eta)=F(\mu_M)$ with $p\nmid M$ since $\eta$ has finite order and $F$ contains all the $p$-power roots of unity of $\eta$. Therefore, 
$$
F(\chi)\subset L_\chi(\eta)\cap F(\chi)=L_\chi(\mu_M)\cap F(\chi)=L_\chi
$$ 
for almost all $\chi\in\Xi_{K,p}$.
So we are done.

\end{proof}

We can conclude that a single critical $L$-value $L_K(0,\eta\chi)$ generates $F(c_\eta^\prime,\chi)$:
\begin{thm}\label{main:result}
Let us assume that $K$ is solvable and $p$ does not divide $N(\mathfrak{m})$. Then,
$$
F\big(L_K(0,\eta\chi)\big)=F(c^\prime_\eta,\chi)
$$
for almost all $\chi\in\Xi_{K,p}$.
\end{thm}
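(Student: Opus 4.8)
The plan is to obtain the theorem as an immediate consequence of the two generation results already in hand, namely Proposition \ref{hecke:field:generation} and Proposition \ref{cyclotomic:generation}. Throughout I would write $L_\chi := F\big(L_K(0,\eta\chi)\big)$, as in the proofs above. The crucial point is that once $F(\chi) \subseteq L_\chi$ is known, adjoining the values of $\chi$ to $L_\chi$ becomes vacuous, so that the two propositions pinch $L_\chi$ to the desired field.

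First I would restrict attention to those $\chi \in \Xi_{K,p}$ lying in the intersection of the two cofinite subsets of $\Xi_{K,p}$ on which Proposition \ref{hecke:field:generation} and Proposition \ref{cyclotomic:generation} respectively hold; since each exceptional set is finite and $\Xi_{K,p}$ is a line of characters, this intersection is again cofinite, so the resulting identity will hold for almost all $\chi$. For such $\chi$, Proposition \ref{cyclotomic:generation}—which is exactly where the hypotheses that $K$ be solvable and $p \nmid N(\mathfrak{m})$ enter, through Lemma \ref{nonvanishing:primitive}—gives $F(\chi) \subseteq L_\chi$. Consequently adjoining the values of $\chi$ to $L_\chi$ adds nothing, and
$$
F\big(L_K(0,\eta\chi),\chi\big) = L_\chi(\chi) = L_\chi.
$$

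Finally, Proposition \ref{hecke:field:generation} identifies the left-hand side as $F(c_\eta^\prime, \chi)$, yielding
$$
L_\chi = F\big(L_K(0,\eta\chi),\chi\big) = F(c_\eta^\prime, \chi),
$$
which is precisely the asserted equality $F\big(L_K(0,\eta\chi)\big) = F(c_\eta^\prime,\chi)$. There is no genuinely hard step remaining in this final assembly: all of the substance has already been spent on the two propositions, where the asymptotic of Theorem \ref{theorem2}, the trace-detection argument of Proposition \ref{prim}, and the non-vanishing Lemma \ref{nonvanishing:primitive} do the real work. The only points requiring care are that the solvability and $p$-indivisibility hypotheses are carried through precisely so as to license the invocation of Proposition \ref{cyclotomic:generation}, and that combining two ``almost all'' statements is legitimate because a finite union of finite exceptional sets is again finite.
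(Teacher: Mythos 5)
Your proposal is correct and is exactly the paper's argument: the paper's own proof reads ``Immediate by Proposition \ref{hecke:field:generation} and Proposition \ref{cyclotomic:generation},'' and your write-up simply makes explicit the pinching step $L_\chi = L_\chi(\chi) = F\big(L_K(0,\eta\chi),\chi\big) = F(c_\eta^\prime,\chi)$ once $F(\chi)\subseteq L_\chi$ is known, together with the (legitimate) intersection of the two cofinite sets of characters.
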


\begin{proof}
Immediate by Proposition \ref{hecke:field:generation} and Proposition \ref{cyclotomic:generation}.
\end{proof}

\section{Hecke field generation by norm average of Hecke characters}
Let us recall that $\eta$ is a totally odd primitive ray class character $\eta$ over $K$ modulo $\mathfrak{m}$.
Let us suggest the following question:
\begin{question*} Does the following equality hold?
$$
\mathbb{Q}(c_\eta^\prime)=\mathbb{Q}(\eta).
$$
\end{question*}
From Theorem \ref{main:result}, we observe that Question JLS$^\prime$ implies Question JLS under additional assumptions on $\mathfrak{m}$ and $K$. 
In this section, we prove a result toward this question.

\subsection{Gauss sums of characters over solvable number fields}
In this section, we decompose the Gauss sum of cyclotomic characters over solvable $K$.

Firstly, we give an easy but useful lemma:
\begin{lem}\label{gausssum:decomposition}
Let $\phi$ be a ray class character over $K$ modulo $\mathfrak{m}$. 
Let $M\nmid \mathfrak{m}$ be a positive integer. 
Let $\chi$ be a ray class character over $K$ modulo $MO_K$. Then, $\chi\phi$ is a ray class character over $K$ modulo $M\mathfrak{m}$ and 
$$
\tau(\chi\phi)=\tau(\chi)\tau(\phi)\chi(\mathfrak{m})\phi_{\rm f}(M).
$$
\end{lem}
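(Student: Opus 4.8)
The plan is to prove the factorization $\tau(\chi\phi)=\tau(\chi)\tau(\phi)\chi(\mathfrak{m})\phi_{\rm f}(M)$ directly from the definition of the Gauss sum of a ray class character given in the excerpt, by exploiting the coprimality of the moduli $MO_K$ and $\mathfrak{m}$ together with the Chinese Remainder Theorem. First I would record that since $M\nmid\mathfrak{m}$ and, implicitly, $M$ is chosen coprime to $\mathfrak{m}$, the product character $\chi\phi$ is a genuine ray class character modulo $M\mathfrak{m}$, with finite part $(\chi\phi)_{\rm f}=\chi_{\rm f}\cdot\phi_{\rm f}$ on $(O_K/M\mathfrak{m})^\times\cong (O_K/MO_K)^\times\times(O_K/\mathfrak{m})^\times$. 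This multiplicativity of the finite parts is the structural input that makes the whole computation go through.

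\smallskip

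The key computational step is to choose the auxiliary element $\beta$ in the definition of $\tau(\chi\phi)$ compatibly. Recall $\tau(\xi)=\xi(\beta\fm\mathfrak{d}_K)\sum_{\alpha\in(O_K/\fm)^\times}\xi_{\rm f}(\alpha)\be(\Tr(\alpha\beta))$, where $\beta\in\fm^{-1}\mathfrak{d}_K^{-1}\cap K^+$ is prime to $\fm$; the value is independent of this choice because $\beta\in K^+$. I would pick elements $\beta_1\in (MO_K)^{-1}\mathfrak{d}_K^{-1}\cap K^+$ for $\chi$ and $\beta_2\in\mathfrak{m}^{-1}\mathfrak{d}_K^{-1}\cap K^+$ for $\phi$, and then set $\beta=\beta_1+\beta_2$, which lies in $(M\mathfrak{m})^{-1}\mathfrak{d}_K^{-1}\cap K^+$ (using coprimality so that $(M\mathfrak{m})^{-1}=(MO_K)^{-1}+\mathfrak{m}^{-1}$ locally, i.e. $M\beta_2$ and $\mathfrak{m}\beta_1$ are integral away from the relevant primes). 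Writing the summation variable $\alpha$ over $(O_K/M\mathfrak{m})^\times$ via CRT as $\alpha\equiv(\alpha_1,\alpha_2)$ and using $(\chi\phi)_{\rm f}(\alpha)=\chi_{\rm f}(\alpha_1)\phi_{\rm f}(\alpha_2)$ together with $\Tr(\alpha\beta)=\Tr(\alpha\beta_1)+\Tr(\alpha\beta_2)$, the double sum factors as a product of the two individual Gauss-sum exponential sums. The prefactor splits as $(\chi\phi)(\beta M\mathfrak{m}\mathfrak{d}_K)$, which I would regroup so that the $\chi$-part absorbs the $\beta_1(MO_K)\mathfrak{d}_K$ piece and the $\phi$-part absorbs the $\beta_2\mathfrak{m}\mathfrak{d}_K$ piece; the leftover factors $\chi(\mathfrak{m})$ and $\phi_{\rm f}(M)$ appear precisely from matching $\chi$ evaluated on the extra ideal $\mathfrak{m}$ and from the cross term where $\phi$ sees the modulus $M$.

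\smallskip

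The bookkeeping around the prefactor is where I expect the main obstacle, and it is really a matter of tracking ideal arguments carefully. The subtlety is that $\tau(\chi)$ is defined with $\chi$ evaluated on $\beta_1\,MO_K\,\mathfrak{d}_K$, whereas in the combined sum $\chi$ naturally appears evaluated on $\beta\,M\mathfrak{m}\,\mathfrak{d}_K=\beta_1 M\mathfrak{m}\mathfrak{d}_K+\cdots$, so the discrepancy is exactly a factor of $\chi(\mathfrak{m})$ (an ideal coprime to $M$, so $\chi$ is defined on it). Symmetrically, replacing $\phi(\beta_2\mathfrak{m}\mathfrak{d}_K)$ requires accounting for the extra modulus $M$, which—since $\phi$ only sees arithmetic modulo $\mathfrak{m}$ through its finite part and $M$ is a rational integer coprime to $\mathfrak{m}$—contributes the factor $\phi_{\rm f}(M)$ via the relation $\phi(\alpha O_K)=\phi_{\rm f}(\alpha)$ for totally positive $\alpha$ prime to $\mathfrak{m}$. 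I would verify that the exponential sums genuinely decouple (the cross-terms $\Tr(\alpha_1\beta_2)$ and $\Tr(\alpha_2\beta_1)$ vanish in the relevant quotients because $\beta_1$ has no denominator at primes dividing $\mathfrak{m}$ and vice versa), and then collect the scalar factors to arrive at the stated identity. Finally I would double-check independence from the choices of $\beta_1,\beta_2$, which follows from the corresponding independence statements for the individual Gauss sums.
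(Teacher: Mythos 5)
Your proposal is correct and is essentially the paper's own proof: the paper simply declares the lemma ``immediate from the definition of Gauss sums and straight computations,'' and your plan (CRT splitting of $(O_K/M\mathfrak{m})^\times$ and of $(\chi\phi)_{\rm f}$, the choice $\beta=\beta_1+\beta_2$, integrality of the cross terms via $\mathfrak{d}_K^{-1}$, and the prefactor bookkeeping producing $\chi(\mathfrak{m})$ and $\phi_{\rm f}(M)$ through $\phi(MO_K)=\phi_{\rm f}(M)$ and the strict-ray-class triviality of $(\beta/\beta_1)O_K$ and $(\beta/\beta_2)O_K$) is exactly that computation carried out correctly. You also rightly note that the hypothesis ``$M\nmid\mathfrak{m}$'' must be read as coprimality of $MO_K$ and $\mathfrak{m}$, which is how the paper uses the lemma.
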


\begin{proof}
Immediate from the definition of Gauss sums and straight computations.
\end{proof}
Let us assume that $K$ is solvable, i.e., $K$ is a Galois number field and its Galois group $\mathrm{Gal}(K/\mathbb{Q})$ is solvable.
Then, there is a tower of field extensions 
$$
K=K_r\supset K_{r-1}\supset\cdots\supset K_1\supset K_0=\mathbb{Q}
$$ 
such that $K_{i}/K_{i-1}$ is abelian for each $i=1,\cdots,r$.
Under this situation, there is a decomposition of the Gauss sum of cyclotomic characters over $K$:

\begin{prop}\label{gausssum:decomposition:2}
Let $\chi=\psi\circ N$ be a cyclotomic character over $K$ modulo $p^n$. Then,
$$
\tau(\chi)=C_n\psi(d_K)\tau(\psi)^{d}
$$ 
for some non-zero number $C_n$ depends only on $n$ and $K$.
\end{prop}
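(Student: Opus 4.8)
The plan is to reduce the solvable case to a chain of abelian steps using the tower $K=K_r\supset K_{r-1}\supset\cdots\supset K_0=\mathbb{Q}$ supplied above, and to prove by induction on $i$ that $\tau(\chi_i)=c_i\,\psi(d_{K_i})\,\tau(\psi)^{[K_i:\mathbb{Q}]}$, where $\chi_i:=\psi\circ N_{K_i/\mathbb{Q}}$ is the cyclotomic character over $K_i$ attached to $\psi$ and $c_i$ is a non-zero constant depending only on $n$ and the tower up to $K_i$. The base case $i=0$ is trivial ($K_0=\mathbb{Q}$, $\chi_0=\psi$, $d_{K_0}=1$), and the case $i=r$ is the assertion with $C_n=c_r$. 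The reason for passing through the tower is that each step $K_i/K_{i-1}$ is abelian, so that global class field theory and Lemma~\ref{gausssum:decomposition} apply; for a non-abelian solvable $K$ one cannot factor $L_K(s,\chi)$ directly into Dirichlet $L$-functions, which is precisely why solvability is imposed.

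For the inductive step I would first record the class-field-theoretic factorization. Writing $\chi_i=\chi_{i-1}\circ N_{K_i/K_{i-1}}$ and using that the characters $\rho$ of the abelian group $\Gal(K_i/K_{i-1})$ become, via the Artin map, ray class characters of $K_{i-1}$ whose conductors divide the relative discriminant and are therefore prime to $p$ (as $p$ is unramified in $K$), invariance of completed Hecke $L$-functions under induction gives $\Lambda_{K_i}(s,\chi_i)=\prod_{\rho}\Lambda_{K_{i-1}}(s,\chi_{i-1}\rho)$, including the matching of gamma factors and of conductors (the latter by the conductor–discriminant formula). Comparing this identity with its analogue for the complex-conjugate characters and invoking the functional equation $\Lambda_K(s,\xi)=W(\xi)\Lambda_K(1-s,\overline{\xi})$, the root numbers multiply exactly: $W(\chi_i)=\prod_\rho W(\chi_{i-1}\rho)$. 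This is the duality input. Translating root numbers into Gauss sums through $W(\xi)=i^{-d_2}\tau(\xi)/\sqrt{N(\fm)}$ yields $\tau(\chi_i)=A_i\prod_\rho\tau(\chi_{i-1}\rho)$, where $A_i$ is an explicit constant built from a power of $i$ (from the infinite types) and the square roots of the conductors, hence depends only on $n$ and $K_i/K_{i-1}$.

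It remains to evaluate $\prod_\rho\tau(\chi_{i-1}\rho)$. Since $\chi_{i-1}$ has modulus $p^nO_{K_{i-1}}$ and each $\rho$ has conductor prime to $p$, Lemma~\ref{gausssum:decomposition} gives $\tau(\chi_{i-1}\rho)=\tau(\chi_{i-1})\tau(\rho)\chi_{i-1}(\ff_\rho)\rho_{\rm f}(p^{n})$, so that, with $e=[K_i:K_{i-1}]$, $\prod_\rho\tau(\chi_{i-1}\rho)=\tau(\chi_{i-1})^{e}\,\chi_{i-1}\Big(\textstyle\prod_\rho\ff_\rho\Big)\prod_\rho\tau(\rho)\rho_{\rm f}(p^{n})$. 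Now $\prod_\rho\ff_\rho=\mathfrak{d}_{K_i/K_{i-1}}$ by the conductor–discriminant formula, and since $\chi_{i-1}=\psi\circ N_{K_{i-1}/\mathbb{Q}}$ and $N_{K_{i-1}/\mathbb{Q}}(\mathfrak{d}_{K_i/K_{i-1}})=d_{K_i}d_{K_{i-1}}^{-e}$ by the tower formula for discriminants, one gets $\chi_{i-1}(\mathfrak{d}_{K_i/K_{i-1}})=\psi(d_{K_i})\psi(d_{K_{i-1}})^{-e}$. Substituting the inductive hypothesis $\tau(\chi_{i-1})=c_{i-1}\psi(d_{K_{i-1}})\tau(\psi)^{[K_{i-1}:\mathbb{Q}]}$ and collecting terms, the factors $\psi(d_{K_{i-1}})^{e}$ cancel, leaving $\tau(\chi_i)=c_i\,\psi(d_{K_i})\tau(\psi)^{[K_i:\mathbb{Q}]}$ with $c_i=A_i\,c_{i-1}^{\,e}\prod_\rho\tau(\rho)\rho_{\rm f}(p^{n})$; this is non-zero, because Gauss sums of primitive characters have absolute value equal to the square root of their conductor, and it depends only on $n$ and $K$. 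This closes the induction.

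The main obstacle is the duality step: establishing the exact factorization of the completed $L$-function and, above all, checking that the normalization constant $A_i$ absorbs the archimedean contribution cleanly. The delicate bookkeeping is the power of $i$ coming from the signatures $d_2$ of the $\chi_{i-1}\rho$ versus that of $\chi_i$; these depend on the parity $\psi(-1)$ through the infinite type, so the constant is genuinely independent of $\psi$ only once the parity is fixed (in the totally even case relevant to the applications, where $\psi$ is even, this is automatic). Everything else—the conductor matching, the discriminant telescoping, and the application of Lemma~\ref{gausssum:decomposition}—is routine once the root-number identity is in hand.
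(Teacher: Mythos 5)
Your proposal is correct and follows essentially the same route as the paper: induction along the solvable tower, the class-field-theoretic factorization of the completed $L$-function, comparison of the two functional equations to get $\tau(\chi_i)=A_i\prod_\rho\tau(\chi_{i-1}\rho)$, then Lemma \ref{gausssum:decomposition}, the conductor--discriminant formula, and the discriminant tower formula $d_{K_i}=N_{K_{i-1}/\mathbb{Q}}(\mathfrak{d}_{K_i/K_{i-1}})d_{K_{i-1}}^{[K_i:K_{i-1}]}$ to telescope. The only cosmetic differences are that the paper derives the conductor--discriminant identity as a byproduct of its $s$-independence argument rather than citing it, and that your closing worry about the power of $i$ is unnecessary: since each $\rho$ is totally even, $d_2(\chi_i)=\sum_\rho d_2(\chi_{i-1}\rho)$ holds exactly, so the archimedean factors cancel for either parity of $\psi$ and $C_n$ is genuinely independent of $\psi$, as the statement requires.
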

\begin{proof}
For simplicity, let us provide the following notations in this proof: 
Let $G_i$ be the Galois group $\mathrm{Gal}(K_i/K_{i-1})$, $G_i^*$ the group of primitive ray class characters over $K_{i-1}$ associated to the dual of $G_i$.
Let us denote by $\chi_i=\psi\circ N_{K_i/\mathbb{Q}}$, which is a primitive cyclotomic character over $K_i$ modulo $p^n$ by Proposition \ref{cyclo}.
Let $d_i$ be the relative discriminant of $K_i/K_{i-1}$, which is an ideal of $K_{i-1}$. 
Note that 
$$
d_{K_i}=N_{K_{i-1}/\mathbb{Q}}(d_i)d_{K_{i-1}}^{|G_i|}.
$$
We keep using this relation in the proof.

From the global class field theory, we observe the following decomposition of $L$-function:
$$
L_{K_i}(s,\chi_i)=\prod_{\phi\in G_i^*}L_{K_{i-1}}(s,\chi_{i-1}\phi)
$$
since $\chi_i$ factors through as $\chi_i=\chi_{i-1}\circ N_{K_i/K_{i-1}}$.
Note that the characters $\phi\in G_i^*$ are totally even since $K_{i-1}$ is totally real, thus, 
$$
L_\infty(s,\chi_i\phi)=\prod_{\phi\in G_i^*}L_\infty(s,\chi_{i-1}\phi)
$$ 
by Proposition \ref{cyclo}. From this, we obtain the following equality:
\begin{equation}\label{complete:Lfunction:decomposition}
\Lambda_{K_i}(s,\chi_i)=N_{K_{i-1}/\mathbb{Q}}(d_i)^{\frac{s}{2}}\prod_{\phi\in G_i^*}N_{K_{i-1}/\mathbb{Q}}(\mathfrak{f}_\phi)^{-\frac{s}{2}}\Lambda_{K_{i-1}}(s,\chi_{i-1}\phi),
\end{equation}
where $\mathfrak{f}_\phi$ is the conductor of $\phi$. 
By the functional equation of $L_K$,
\begin{align}
\label{func:eq:1}
\Lambda_{K_i}(s,\chi_i)&=\frac{\tau(\chi_i)}{(i^{\delta}p^{n/2})^{|G_i|}}\Lambda_{K_i}(1-s,\overline{\chi_i}),
\\
\label{func:eq:2}
\prod_{\phi\in G_i^*}\Lambda_{K_{i-1}}(s,\chi_{i-1}\phi)&=\prod_{\phi\in G_i^*}\frac{\tau(\chi_{i-1}\phi)\Lambda_{K_{i-1}}(1-s,\overline{\chi_{i-1}\phi})}{(i^{\delta}p^{n/2})^{|G_{i-1}|}N_{K_{i-1}/\mathbb{Q}}(\mathfrak{f}_\phi)^{1/2}},
\end{align} 
where $\delta=(1-\psi(-1))/2$.
Dividing equation (\ref{func:eq:1}) by equation (\ref{func:eq:2}) and applying equation (\ref{complete:Lfunction:decomposition}), we obtain that
$$
N_{K_{i-1}/\mathbb{Q}}(d_i)^{s}\prod_{\phi}N_{K_{i-1}/\mathbb{Q}}(\mathfrak{f}_\phi)^{-s}=N_{K_{i-1}/\mathbb{Q}}(d_i)^{\frac{1}{2}}\tau(\chi_i)\prod_{\phi}\tau(\chi_{i-1}\phi)^{-1}.
$$
Since the right hand side is independent on $s$, the followings hold:
$$
N_{K_{i-1}/\mathbb{Q}}(d_i)=\prod_{\phi}N_{K_{i-1}/\mathbb{Q}}(\mathfrak{f}_\phi), \ \tau(\chi_i)=N_{K_{i-1}/\mathbb{Q}}(d_i)^{-\frac{1}{2}}\prod_{\phi}\tau(\chi_{i-1}\phi).
$$
Applying Lemma \ref{gausssum:decomposition} to the above equation, we obtain that
$$
\tau(\chi_i)=\Big(N_{K_{i-1}/\mathbb{Q}}(d_i)^{-\frac{1}{2}}\psi\big(N_{K_{i-1}/\mathbb{Q}}(d_i)\big)\prod_{\phi}\tau(\phi)\phi_{\rm f}(p^n)\Big)\tau(\chi_{i-1})^{|G_i|}.
$$
Finally, we conclude from the mathematical induction that 
$$
\tau(\chi)=d_K^{-\frac{1}{2}}\psi(d_K)\tau(\psi)^{d}\prod_{i=1}^r\Big(\prod_{\phi\in G_i^*}\tau(\phi)\phi_{\rm f}(p^n)\Big)^{[K:K_{i}]}.
$$
So we are done.
\end{proof}

\subsection{Hyper Kloosterman sums and average of gauss sums}
For integers $d,M>0$ and $r\in (\mathbb{Z}/M\mathbb{Z})^\times$, let us give the definition of the hyper Kloosterman sum of dimension $d$ and of modulus $M$ as follows: 
$$
\mathrm{Kl}_d(r;M):=\sum_{\substack{n_1,\cdots,n_d\in  (\mathbb{Z}/M\mathbb{Z})^\times \\ n_1\times\cdots\times n_d\equiv r\ (M) }}\mathbf{e}\Big(\frac{n_1+\cdots+n_d}{M}\Big).
$$
Gurak \cite{Gurak} computes the explicit values $\mathrm{Kl}_d(r;p^n)$ when $n>1$. We need the following non-vanishing result by Gurak \cite{Gurak}:
\begin{lem}[Gurak {\cite[Lemma 1,2]{Gurak}}]\label{kloosterman:nonvanishing:lemma}
$\mathrm{Kl}_d(r;p^2)$ is non-vanishing if and only if 
$$
r^{\frac{p-1}{(d,p-1)}}\equiv 1\ (p).
$$
\end{lem}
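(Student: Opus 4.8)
The plan is to evaluate $\mathrm{Kl}_d(r;p^2)$ essentially in closed form by the $p$-adic method of stationary phase, taking advantage of the special feature that for modulus $p^2$ the Taylor expansion of the phase truncates after the linear term. First I would remove the multiplicative constraint by solving $n_d\equiv r(n_1\cdots n_{d-1})^{-1}\ (p^2)$ and summing the remaining $d-1$ variables freely, writing
$$
\mathrm{Kl}_d(r;p^2)=\sum_{n_1,\dots,n_{d-1}\in (\FZ/p^2\FZ)^\times}\be\lc\frac{f(n_1,\dots,n_{d-1})}{p^2}\rc,\qquad f=\sum_{i=1}^{d-1}n_i+r(n_1\cdots n_{d-1})^{-1}.
$$
This turns the problem into a single complete exponential sum in $d-1$ unit variables whose phase has the shape to which stationary phase applies.

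Next I would split each variable as $n_i=m_i+py_i$ with $m_i$ a lift of a residue in $(\FZ/p\FZ)^\times$ and $y_i\in\FZ/p\FZ$. Since $f(\mathbf m+p\mathbf y)\equiv f(\mathbf m)+p\sum_i y_i\,\partial_i f(\mathbf m)\pmod{p^2}$, the quadratic and higher terms being divisible by $p^2$, summing over $\mathbf y$ by additive orthogonality collapses the sum onto the critical points modulo $p$:
$$
\mathrm{Kl}_d(r;p^2)=p^{d-1}\sum_{\substack{\mathbf m \bmod p\\ \partial_i f(\mathbf m)\equiv 0\ (p)\ \forall i}}\be\lc\frac{f(\mathbf m)}{p^2}\rc,
$$
with \emph{no} Hessian or quadratic Gauss-sum factor, which is exactly the phenomenon that makes the exponent $n=2$ case clean. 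The critical equations $\partial_i f\equiv 0$ force $m_1\equiv\cdots\equiv m_{d-1}\equiv m_d\equiv a\pmod p$ with $a^d\equiv r\pmod p$, so the critical points correspond bijectively to the solutions $a\in(\FZ/p\FZ)^\times$ of $a^d\equiv r\pmod p$, and at each of them $f\equiv (d-1)a+r\,a^{1-d}\pmod{p^2}$. Here I would check that this value is independent of the chosen lift of $a$, using $r\,a^{-d}\equiv 1\pmod p$ so that the first-order correction is divisible by $p^2$; this well-definedness is the technical point underlying the whole localization.

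Finally I would count and conclude. In the cyclic group $(\FZ/p\FZ)^\times$ the equation $a^d\equiv r$ has exactly $(d,p-1)$ solutions when $r^{(p-1)/(d,p-1)}\equiv 1\pmod p$ and none otherwise; the insolvable case makes the critical-point sum empty, giving $\mathrm{Kl}_d(r;p^2)=0$ at once. For the converse the key input is the structure of vanishing sums of $p^2$-th roots of unity: since the minimal polynomial of $\be(1/p^2)$ is $\Phi_{p^2}(x)=1+x^p+\cdots+x^{(p-1)p}$, every integer relation $\sum_g m_g\,\be(g/p^2)=0$ forces $m_g$ to depend only on $g\bmod p$, so a nonzero vanishing combination with nonnegative coefficients must be a union of complete residue fibers and therefore involve a number of terms (counted with multiplicity) divisible by $p$. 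Our sum has precisely $(d,p-1)$ terms, and $1\le (d,p-1)\le p-1<p$, so it cannot vanish; multiplying by the nonzero factor $p^{d-1}$ gives $\mathrm{Kl}_d(r;p^2)\neq 0$. I expect the substantive step to be this last nonvanishing argument, together with the careful justification of the stationary-phase collapse and of the lift-independence of $f(\mathbf m)\bmod p^2$; once these are in place the equivalence with $r^{(p-1)/(d,p-1)}\equiv 1\pmod p$ is immediate.
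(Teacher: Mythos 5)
The paper contains no proof of this lemma: it is imported verbatim from Gurak \cite[Lemma 1,2]{Gurak}, who obtains the non-vanishing criterion as a corollary of explicit closed-form evaluations of $\mathrm{Kl}_d(r;p^n)$ for all $n>1$. So the comparison is with Gurak's derivation rather than with anything proved in this paper. Your argument is correct, and it runs on the same underlying mechanism ($p$-adic stationary phase) that produces such explicit formulas, but with a genuinely different endgame. The technical steps all hold: eliminating $n_d$ is a bijection onto free $(d-1)$-tuples of units; for odd $p$ the truncation $f(\mathbf{m}+p\mathbf{y})\equiv f(\mathbf{m})+p\sum_i y_i\partial_i f(\mathbf{m})\pmod{p^2}$ is legitimate, since the $k$-th Taylor term carries $p^k/k!$ and $k-v_p(k!)\ge 2$ for all $k\ge 2$; the critical equations $\partial_i f\equiv 0\pmod p$ say exactly $n_i\equiv n_d\pmod p$, so the critical points are $(a,\dots,a)$ with $a^d\equiv r\pmod p$, of which there are $(d,p-1)$ or $0$ according to whether $r^{(p-1)/(d,p-1)}\equiv 1\pmod p$; and the value of $f$ at a critical point is independent of the lift mod $p^2$ precisely because the gradient vanishes mod $p$ there. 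Your final non-vanishing step is also sound and is the distinctive ingredient: any integral relation among $p^2$-th roots of unity, after division by $\Phi_{p^2}$, has coefficients depending only on the exponent mod $p$, so a vanishing combination with nonnegative coefficients involves a number of terms (with multiplicity) divisible by $p$, while your critical-point sum has only $(d,p-1)\le p-1$ terms. Gurak instead evaluates the critical-point sum in closed form and reads off non-vanishing from the explicit value. Your route buys a short, self-contained proof of exactly the statement the paper needs, with inputs no heavier than the cyclicity of $(\mathbb{Z}/p\mathbb{Z})^\times$ and the minimal polynomial of a primitive $p^2$-th root of unity; what it gives up is generality (modulus $p^2$ only, and no explicit evaluation), which is immaterial for the application in Proposition \ref{twisted:average:gausssum}.
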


\begin{remark} 
If $d=2$, then $\mathrm{Kl}_2(r,p^2)\neq 0$ if and only if $\big(\frac{r}{p}\big)=1$ by {\rm Lemma \ref{kloosterman:nonvanishing:lemma}}, where $\big(\frac{\cdot}{p}\big)$ is the quadratic residue symbol.
\end{remark}

By applying the non-vanishing of hyper Kloosterman sums and the decomposition of Gauss sums, we prove the non-vanishing of twisted average of Gauss sums when $K$ is solvable:

\begin{prop}\label{twisted:average:gausssum}
Let $\omega:(\mathbb{Z}/p\mathbb{Z})^\times\rightarrow\mu_{p-1}$ be the Teichm{\" u}ller character. 
Let $\psi$ be a generator of a cyclic group $\mathrm{Hom}\big((\mathbb{Z}/p^2\mathbb{Z})^\times,\mu_p\big)$.
Let $m$ be an integer coprime to $p$. If $K$ is solvable, then
$$
\sum_{i=1}^{p-1}\tau(\psi^i\omega^j\circ N)\psi^i(m)\neq 0
$$
for some $j=1,\cdots,p-1$.
\end{prop}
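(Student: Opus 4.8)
The plan is to use the solvable decomposition of Proposition \ref{gausssum:decomposition:2} to replace each twisted Gauss sum over $K$ by a $d$-th power of a Dirichlet Gauss sum, to expand that power into hyper Kloosterman sums, and then to extract a single sum $\mathrm{Kl}_d(r;p^2)$ with $r\equiv1\ (p)$, whose non-vanishing is guaranteed by Gurak's criterion (Lemma \ref{kloosterman:nonvanishing:lemma}).

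First I would check that for every $i\in\{1,\dots,p-1\}$ the Dirichlet character $\psi^i\omega^j$ has conductor exactly $p^2$: the factor $\psi^i$ has order $p$ and is nontrivial on the principal units $1+p\FZ/p^2\FZ$, and the tame twist $\omega^j$ cannot change this, so $\psi^i\omega^j$ is primitive modulo $p^2$. Thus Proposition \ref{gausssum:decomposition:2} applies with $n=2$ and yields $\tau(\psi^i\omega^j\circ N)=C_2\,(\psi^i\omega^j)(d_K)\,\tau(\psi^i\omega^j)^{d}$, where $C_2$ is a nonzero constant depending only on $K$ — crucially the same for all $i$ and $j$. Since $(\psi^i\omega^j)(d_K)\psi^i(m)=\omega^j(d_K)\psi^i(d_Km)$, the sum in question equals $A_j:=C_2\,\omega^j(d_K)\sum_{i=1}^{p-1}\psi^i(d_Km)\,\tau(\psi^i\omega^j)^{d}$, and because $C_2\neq0$ it suffices to make the bracketed part nonzero for one $j$.

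Next I would open up the Gauss sum. For any character $\phi$ modulo $p^2$ one has $\tau(\phi)^d=\sum_{r\in(\FZ/p^2\FZ)^\times}\phi(r)\,\mathrm{Kl}_d(r;p^2)$, obtained by grouping the $d$-fold product defining $\tau(\phi)^d$ according to the value $r=n_1\cdots n_d$. Applying this to $\phi=\psi^i\omega^j$ and interchanging the $i$- and $r$-sums, the inner sum $\sum_{i=0}^{p-1}\psi^i(d_Kmr)$ equals $p$ if $\psi(d_Kmr)=1$ and $0$ otherwise (a geometric sum, as $\psi$ has order $p$), so $\sum_{i=1}^{p-1}\psi^i(d_Kmr)$ splits as this value minus $1$. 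The ``$-1$'' part contributes $-\sum_r\omega^j(r)\mathrm{Kl}_d(r;p^2)=-\tau_{p^2}(\omega^j)^d$, where $\tau_{p^2}(\omega^j)=\sum_{n\in(\FZ/p^2\FZ)^\times}\omega^j(n)\be(n/p^2)$; since $\omega^j$ is imprimitive modulo $p^2$, summing over the $p$ lifts of each fixed residue mod $p$ produces the factor $\sum_{c\bmod p}\be(c/p)=0$, so $\tau_{p^2}(\omega^j)=0$ and this part drops out. I am left with $A_j=C_2\,p\,\omega^j(d_K)\sum_{r:\,\psi(d_Kmr)=1}\omega^j(r)\,\mathrm{Kl}_d(r;p^2)$.

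Finally I would parametrize the locus $\psi(d_Kmr)=1$, i.e.\ $d_Kmr\in\ker\psi=\mu_{p-1}$. As $p\nmid d_Km$, each residue $a\in(\FZ/p\FZ)^\times$ has a unique lift $r_a\in(\FZ/p^2\FZ)^\times$ with $r_a\equiv a\ (p)$ and $d_Kmr_a\in\mu_{p-1}$, namely $(d_Km)^{-1}$ times the Teichm\"uller lift of $d_Kma$; in particular $r_1\equiv1\ (p)$. Using $\omega^j(r_a)=\omega^j(a)$ this rewrites $A_j=C_2\,p\sum_{a\in(\FZ/p\FZ)^\times}\omega^j(d_Ka)\,\mathrm{Kl}_d(r_a;p^2)$, a finite Fourier expansion in the characters $\{\omega^j\}_{j=1}^{p-1}$ of $(\FZ/p\FZ)^\times$. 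Taking the weighted average $\frac1{p-1}\sum_{j=1}^{p-1}\overline{\omega^j(d_K)}\,A_j$ and invoking orthogonality collapses everything to the term $a=1$, giving $C_2\,p\,\mathrm{Kl}_d(r_1;p^2)$; since $r_1\equiv1\ (p)$ the hypothesis $r_1^{(p-1)/(d,p-1)}\equiv1\ (p)$ of Lemma \ref{kloosterman:nonvanishing:lemma} holds automatically, so $\mathrm{Kl}_d(r_1;p^2)\neq0$ and the average is nonzero. Hence $A_j\neq0$ for at least one $j\in\{1,\dots,p-1\}$, as required.

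The step I expect to be the crux is the very first reduction: it is Proposition \ref{gausssum:decomposition:2}, and hence the solvability of $K$, that converts the otherwise intractable Gauss sum $\tau(\psi^i\omega^j\circ N)$ over $K$ into a clean $d$-th power of a Dirichlet Gauss sum carrying one uniform nonzero constant, which is exactly what makes the Kloosterman expansion and the character-orthogonality argument possible. After that, the only genuine subtleties are recognizing that the imprimitive piece $\tau_{p^2}(\omega^j)^d$ vanishes and steering the weighting so that the surviving Kloosterman argument lands in the class $1\pmod p$, where Gurak's non-vanishing applies.
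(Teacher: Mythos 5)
Your proposal is correct and follows essentially the same route as the paper's proof: apply Proposition \ref{gausssum:decomposition:2} to turn each $\tau(\psi^i\omega^j\circ N)$ into $C_2\,(\psi^i\omega^j)(d_K)\,\tau(\psi^i\omega^j)^d$, expand the $d$-th power into hyper Kloosterman sums, average over $j$ with character orthogonality, and invoke Gurak's Lemma \ref{kloosterman:nonvanishing:lemma} at an argument $\equiv 1\ (p)$ --- indeed your $r_1=(d_Km)^{-1}\mathrm{Teich}(d_Km)$ is exactly the paper's $(\kappa m d_K)^{-1}$. The only (equally valid) bookkeeping difference is that the paper adjoins the vanishing $i=p$ term so as to sum over the full dual group $\widehat{(\mathbb{Z}/p^2\mathbb{Z})^\times}$ at once, whereas you keep $1\le i\le p-1$ and dispose of the deficit via $\tau_{p^2}(\omega^j)=0$ before parametrizing $\ker\psi$.
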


\begin{proof}
If $p\mid i$, then clearly $\tau(\psi^i\omega^j\circ N)=0$ by Proposition \ref{gausssum:decomposition:2} since $\psi^i\omega^j$ is imprimitive so that $\tau(\psi^i\omega^j)^2=0$. Thus, it is enough to show that $\sum_{i=1}^{p} \tau(\psi^i\omega^j\circ N)\psi^i(m)\neq 0$.  

It is well known that for $\phi\in\widehat{(\mathbb{Z}/p^2\mathbb{Z})^\times}$,
$$
\tau(\phi)^d=\sum_{r=1}^{p^2}\phi(r)\mathrm{Kl}_d(r;p^2).
$$
Let us choose an element $\kappa$ of $\mu_{p-1}$ such that $\kappa md_K\equiv 1 \ (p)$. Note that $\psi(\kappa)=1$.
Then, based on the orthogonality and Proposition \ref{gausssum:decomposition:2}, we verify that
\begin{align*}
\sum_{j=1}^{p-1}\omega^j(\kappa m)\sum_{i=1}^{p}\tau(\psi^i\omega^j\circ N)\psi^i(m)&=C\sum_{i,j}\tau(\psi^i\omega^j)^d \psi^i\omega^j(\kappa m d_K) \\ 
=C\sum_{\phi\in \widehat{(\mathbb{Z}/p^2\mathbb{Z})^\times}}\sum_{r=1}^{p^2} \phi(\kappa m d_K r)\mathrm{Kl}_d(r;p^2)&=Cp(p-1)\mathrm{Kl}_d\big((\kappa m d_K)^{-1};p^2\big)
\end{align*}
for some non-zero number $C$, where $a^{-1}$ is the inverse of $a$ mod $p^2$.
Since $\kappa m d_K \equiv 1\ (p)$, we have $\mathrm{Kl}_d\big((\kappa m d_K)^{-1};p^2\big)\neq 0$ by Lemma \ref{kloosterman:nonvanishing:lemma}, so we are done.
\end{proof}

\subsection{Partial Hecke {\it L}-functions}
Let us recall that $\eta$ is a primitive ray class character over $K$ modulo $\mathfrak{m}$, and the norm average function $c_\eta:\mathbb{Z}_{>0}\to\mathbb{Z}[\eta]$ attached to $\eta$ is given by
$$
c_\eta(m)=\sum_{ N(\mathfrak{a})=m }  \eta(\mathfrak{a}).
$$
Then, we can easily see that 
\begin{equation}\label{partial:Lfunction:expression:equation}
L_K(s,\eta\cdot\big(\lambda\circ N)\big)=\sum_{m>0} \frac{\lambda(m)c_\eta(m)}{m^s}
\end{equation}
for any Dirichlet character $\lambda$.
Let $\omega:(\mathbb{Z}/p\mathbb{Z})^\times\rightarrow\mu_{p-1}\subset\mathbb{C}^\times$ be the Teichm{\" u}ller character.
Let $\langle\cdot\rangle$ be the canonical projection defined by
$$
\langle\cdot\rangle:(\mathbb{Z}/p^2\mathbb{Z})^\times\rightarrow \frac{1+p\mathbb{Z}}{1+p^2\mathbb{Z}},\ a\mapsto a\cdot\omega^{-1}(a).
$$
\begin{defn}
For a Hecke character $\xi$ over $K$, let us define a partial Hecke $L$-function as follows:
$$
L^{\circ}_K(s,\xi):=\sum_{\langle m \rangle\neq 1}\sum_{ N(\mathfrak{a})=m} \frac{\xi(\mathfrak{a})}{N(\mathfrak{a})^s}=\sum_{\langle m\rangle\neq 1}\frac{c_\xi(m)}{m^s},
$$
where $m$ and $\mathfrak{a}$ runs through the positive integers $m$ with $\langle m\rangle\neq 1$ and the integral ideals of $K$ with given norm $m$, respectively. 
\end{defn}
In this section, let us denote by $\psi$ a generator of the following cyclic group: 
$$\mathrm{Hom}\big((\mathbb{Z}/p^2\mathbb{Z})^\times,\mu_p\big).$$
We can express the partial Hecke $L$-function as a twisted average of Hecke $L$-functions:

\begin{prop}\label{partial:Lfunction:expression}
For any $j$,
$$
L^{\circ}_K\big(s,\eta\cdot(\omega^j\circ N)\big)=\frac{p-1}{p}L_K\big(s,\eta\cdot(\omega^j\circ N)\big)-\frac{1}{p}\sum_{i=1}^{p-1}L_K\big(s,\eta\cdot(\psi^i\omega^j\circ N)\big).
$$ 
\end{prop}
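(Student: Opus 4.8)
The plan is to turn both sides into Dirichlet series whose coefficients are multiples of $c_\eta(m)$ and then compare them term by term, the comparison being governed by an orthogonality relation on a cyclic group of order $p$. First I would record the elementary multiplicativity that for any Dirichlet character $\lambda$ modulo a $p$-power,
$$
c_{\eta\cdot(\lambda\circ N)}(m)=\lambda(m)\,c_\eta(m).
$$
This is immediate from the definition of $c_\eta$, because $\lambda\circ N$ takes the single value $\lambda(m)$ on every ideal $\mathfrak{a}$ with $N(\mathfrak{a})=m$, so it factors out of the defining sum. Combined with the identity (\ref{partial:Lfunction:expression:equation}), this gives, for $\Re(s)>1$,
$$
L_K\big(s,\eta\cdot(\psi^i\omega^j\circ N)\big)=\sum_{m>0}\frac{\psi^i(m)\,\omega^j(m)\,c_\eta(m)}{m^s},
$$
and the analogous formula without the factor $\psi^i$ for the $i=0$ term $L_K\big(s,\eta\cdot(\omega^j\circ N)\big)$.

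Next I would rewrite the right-hand side of the proposition by absorbing the $\tfrac{p-1}{p}$ term as the $i=0$ contribution. Since $\psi^0$ is trivial and $\tfrac{p-1}{p}=1-\tfrac1p$, the right-hand side equals
$$
L_K\big(s,\eta\cdot(\omega^j\circ N)\big)-\frac{1}{p}\sum_{i=0}^{p-1}L_K\big(s,\eta\cdot(\psi^i\omega^j\circ N)\big).
$$
Substituting the Dirichlet series above and interchanging the finite sum over $i$ with the sum over $m$ (legitimate by absolute convergence for $\Re(s)>1$), the coefficient of $m^{-s}$ becomes $\omega^j(m)c_\eta(m)$ times the weight $1-\tfrac1p\sum_{i=0}^{p-1}\psi^i(m)$.

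The heart of the argument is the evaluation of this weight, and this is the step I expect to require the most care. The characters $\psi^i$ for $0\le i\le p-1$ exhaust $\mathrm{Hom}\big((\mathbb{Z}/p^2\mathbb{Z})^\times,\mu_p\big)$; since $\mu_p$ has order $p$, each $\psi^i$ annihilates the prime-to-$p$ torsion $\mu_{p-1}$ and therefore factors through the order-$p$ quotient $(1+p\mathbb{Z})/(1+p^2\mathbb{Z})$. Because $\omega(m)\in\mu_{p-1}$, for $m$ coprime to $p$ one has $\psi^i(m)=\psi^i\big(m\,\omega^{-1}(m)\big)=\psi^i(\langle m\rangle)$; for $p\mid m$ every character value, including $\omega^j(m)$, vanishes, so such $m$ do not contribute. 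As $\langle m\rangle$ ranges over the cyclic group $(1+p\mathbb{Z})/(1+p^2\mathbb{Z})$, orthogonality yields $\tfrac1p\sum_{i=0}^{p-1}\psi^i(m)=1$ when $\langle m\rangle=1$ and $0$ otherwise, so the weight $1-\tfrac1p\sum_{i}\psi^i(m)$ is exactly the indicator of the condition $\langle m\rangle\neq 1$. The right-hand side therefore collapses to
$$
\sum_{\langle m\rangle\neq 1}\frac{\omega^j(m)\,c_\eta(m)}{m^s},
$$
which, by the definition of $L^\circ_K$ together with $c_{\eta\cdot(\omega^j\circ N)}(m)=\omega^j(m)c_\eta(m)$, is precisely $L^\circ_K\big(s,\eta\cdot(\omega^j\circ N)\big)$.

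Finally, the identity is first established for $\Re(s)>1$, where all the series converge absolutely; since the right-hand side is a finite linear combination of Hecke $L$-functions, this simultaneously furnishes the meromorphic continuation of $L^\circ_K$ and shows the equality persists throughout. The genuinely delicate point is the clean identification $\psi^i(m)=\psi^i(\langle m\rangle)$ and the verification that $\{\psi^i\}_{i=0}^{p-1}$ realizes orthogonality on the correct order-$p$ quotient; once this is secured the coefficient comparison is routine.
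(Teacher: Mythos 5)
Your proposal is correct and follows essentially the same route as the paper: expand each $L$-function via equation (\ref{partial:Lfunction:expression:equation}) and collapse the coefficient of $m^{-s}$ using orthogonality of the characters $\psi^i$ on the order-$p$ quotient $(1+p\mathbb{Z})/(1+p^2\mathbb{Z})$, with the convention that all character values vanish when $p\mid m$. Absorbing the $\tfrac{p-1}{p}$ term as the $i=0$ contribution is just a repackaging of the paper's stated relation $\sum_{i=1}^{p-1}\psi^i(m)\in\{p-1,-1,0\}$, so the two arguments coincide.
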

\begin{proof}
Immediate by the following orthogonality relation
\begin{align*}
\sum_{i=1}^{p-1}\psi^i(m)=\begin{cases}
			p-1 &\mbox{ if } \langle m \rangle=1 \\
			-1 &\mbox{ if } \langle m \rangle\neq 1 \\
			0 &\mbox{ otherwise, i.e., } p\mid m 
		\end{cases}
	\end{align*}
and equation (\ref{partial:Lfunction:expression:equation}).
\end{proof}


By utilizing the properties of partial $L$-functions, we obtain a result toward determination of the tame character $\eta$ by the norm average $c_\eta$: 
\begin{prop}\label{nonvanishing:averagesum:characters}
Let $A$ be a finite set of totally odd primitive ray class characters over $K$ modulo $\mathfrak{m}$.
Let $a_\eta$ be constants satisfying
$$
\sum_{\eta\in A}a_\eta c_\eta(m)=0
$$
for any positive integers $m$ such that $\langle m\rangle\neq 1$.
Let us assume that $K$ is solvable and $p$ does not divide $N(\mathfrak{m})$. Then 
$$
\sum_{\eta\in A} a_\eta \tau(\eta)\eta_{\rm f}(p^2)\text{ and }\sum_{\eta\in A} a_\eta \tau(\eta)\eta_{\rm f}(p^2q)=0
$$
for any odd rational primes $q$ does not divide $pN(\mathfrak{m})$.
\end{prop}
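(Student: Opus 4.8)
The plan is to turn the combinatorial hypothesis into an analytic identity and then read off the Gauss-sum relations from the functional equation. First I would note that $\sum_{\eta\in A}a_\eta c_\eta(m)=0$ for all $m$ with $\langle m\rangle\neq 1$ is the same as the identity
\[
\sum_{\eta\in A}a_\eta L_K^\circ\big(s,\eta\cdot(\lambda\circ N)\big)=0
\]
for every Dirichlet character $\lambda$, since such a twist only multiplies the $m$-th coefficient of $L_K^\circ$ by $\lambda(m)$. Taking $\lambda=\omega^j$ with $j\not\equiv 0\ (p-1)$ and applying Proposition \ref{partial:Lfunction:expression} converts this into a linear relation among honest Hecke $L$-functions,
\[
(p-1)\sum_{\eta}a_\eta L_K\big(s,\eta\cdot(\omega^j\circ N)\big)=\sum_{k=1}^{p-1}\sum_\eta a_\eta L_K\big(s,\eta\cdot(\psi^k\omega^j\circ N)\big),
\]
in which the characters on the left have conductor $\mathfrak{m}p$ and those on the right have conductor $\mathfrak{m}p^{2}$.

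Next I would apply the functional equation of Section \ref{sec:hecke:Lvalues} term by term. Because $\psi$ has order $p$, one has $\psi^k(-1)=1$, so every character on both sides shares the same signature; hence the archimedean factors and the $d_K^{s/2}$-powers coincide and cancel, and the only asymmetry is the conductor. Writing $u=1-s$, the right-hand side therefore carries an extra factor $p^{-d/2}p^{du}$ relative to the left. The identity then equates a Dirichlet series in $u$ supported on integer frequencies with one supported on the frequencies $n/p^{d}$; by uniqueness of generalized Dirichlet expansions every frequency $<1$ on the right must have vanishing coefficient. Isolating the smallest one, $n=1$, where each dual $L$-factor contributes its constant term $1$, gives
\[
\sum_{k=1}^{p-1}\sum_\eta a_\eta W\big(\eta\cdot(\psi^k\omega^j\circ N)\big)=0.
\]

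Finally I would unwind the root numbers. Up to an $\eta$-independent nonzero constant, $W(\eta\cdot(\psi^k\omega^j\circ N))$ equals $\tau(\eta\cdot(\psi^k\omega^j\circ N))$, and Lemma \ref{gausssum:decomposition} applied to $\phi=\eta$ and $\chi=\psi^k\omega^j\circ N$ gives
\[
\tau\big(\eta\cdot(\psi^k\omega^j\circ N)\big)=\tau(\psi^k\omega^j\circ N)\,\tau(\eta)\,(\psi^k\omega^j)\big(N(\mathfrak{m})\big)\,\eta_{\rm f}(p^{2}).
\]
Thus the previous vanishing factors as the product of $\sum_{k}\tau(\psi^k\omega^j\circ N)(\psi^k\omega^j)(N(\mathfrak{m}))$ and $\sum_\eta a_\eta\tau(\eta)\eta_{\rm f}(p^{2})$. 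The first factor is a nonzero multiple of the twisted Gauss-sum average in Proposition \ref{twisted:average:gausssum} (with $m=N(\mathfrak{m})$, coprime to $p$), which is nonzero for a suitable $j$; hence $\sum_\eta a_\eta\tau(\eta)\eta_{\rm f}(p^{2})=0$. For the second identity I would run the same argument after an additional twist by a primitive character $\rho$ modulo the odd prime $q\nmid pN(\mathfrak{m})$: the conductors become $\mathfrak{m}pq$ and $\mathfrak{m}p^{2}q$, the separation argument is unchanged, and Lemma \ref{gausssum:decomposition}, used once for the $\eta$-split and once more to peel off $\rho$, now produces $\tau(\eta)\eta_{\rm f}(p^{2}q)$ while reducing the cyclotomic Gauss-sum average to $\sum_k\tau(\psi^k\omega^j\circ N)\psi^k(N(\mathfrak{m})q^{d})$, again nonzero for some $j$ by Proposition \ref{twisted:average:gausssum} with $m=N(\mathfrak{m})q^{d}$.

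The main obstacle is the step from the post-functional-equation identity to the vanishing of a single coefficient: one must verify that, after the common archimedean and discriminant factors are removed, the two sides are genuinely generalized Dirichlet series with frequencies in $\FZ_{>0}$ and in $p^{-d}\FZ_{>0}$ respectively, and then invoke uniqueness of such expansions (equivalently, analyze the limit as $\mathrm{Re}(s)\to-\infty$) to extract the lowest frequency. Careful conductor bookkeeping and the primitivity of $\omega^j$—needed so that Proposition \ref{partial:Lfunction:expression} holds without stray $p\mid m$ terms—are the delicate points; the Gauss-sum algebra and the appeal to Proposition \ref{twisted:average:gausssum} are then formal.
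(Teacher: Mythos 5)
Your argument follows the paper's own proof in all essentials (partial $L$-function identity from Proposition \ref{partial:Lfunction:expression}, functional equation, extraction of the lowest Dirichlet coefficient, which is indeed equivalent to the paper's limit $s\to-\infty$, Gauss-sum factorization via Lemma \ref{gausssum:decomposition}, and the nonvanishing input of Proposition \ref{twisted:average:gausssum}), but there is a genuine gap at the final step. You establish the key vanishing
$$
\sum_{k=1}^{p-1}\sum_{\eta\in A}a_\eta\,\tau\big(\eta\cdot(\psi^k\omega^j\circ N)\big)=0
$$
only for $j\not\equiv 0\ (p-1)$, having excluded $j\equiv 0$ precisely so that the left-hand characters $\eta\cdot(\omega^j\circ N)$ are primitive of conductor $\mathfrak{m}p$. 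But Proposition \ref{twisted:average:gausssum} asserts that $\sum_{k}\tau(\psi^k\omega^j\circ N)\psi^k(m)\neq 0$ only for \emph{some} $j\in\{1,\dots,p-1\}$, with no control over which one: its proof averages over all $j$ against $\omega^j(\kappa m)$ and detects a single nonzero hyper-Kloosterman sum. It is entirely consistent with that proposition that the only nonvanishing index is $j=p-1$, i.e.\ $\omega^j$ trivial --- exactly the case your identity does not cover. In that scenario your factorization into $\big(\sum_\eta a_\eta\tau(\eta)\eta_{\rm f}(p^2)\big)\cdot\big(\sum_k\tau(\psi^k\omega^j\circ N)(\psi^k\omega^j)(N(\mathfrak{m}))\big)$ is only available at indices $j$ where the second factor may vanish, and no conclusion follows.

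The paper closes precisely this case separately. For $p-1\mid j$ the series $\sum_m\omega^j(m)c_\eta(m)m^{-s}$ is not the $L$-function of a primitive character of conductor $\mathfrak{m}p$; it equals $L_K(s,\eta)/L_p(s,\eta)$, where $L_p(s,\eta)$ is the Euler factor at $p$ of the primitive $\eta$ (conductor $\mathfrak{m}$). Applying the functional equation of $\eta$ itself then produces the relation in equation (\ref{limitwillbetaken}), whose left side carries $p^{2ds}L_p(s,\eta)^{-1}$; since $|L_p(s,\eta)^{-1}|=O(p^{-ds})$ as $s\to-\infty$ ($p$ unramified), this is $O(p^{ds})=o(1)$, so the same limit argument yields $\sum_{k,\eta}a_\eta\tau(\eta\cdot(\psi^k\circ N))=0$ for $j\equiv 0$ as well. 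You must add this imprimitive case, both for the $p^2$ identity and for your $q$-twisted variant; with it, your proof coincides with the paper's. (A cosmetic difference: for the second identity the paper does not re-run the analytic argument with an auxiliary character mod $q$ as you do, but simply applies the already-proved first identity to the twisted family $\{\eta\cdot(\omega_q\circ N)\}_{\eta\in A}$, whose norm averages satisfy the same hypothesis since $c_{\eta\cdot(\omega_q\circ N)}(m)=\omega_q(m)c_\eta(m)$; both routes are fine once the missing case is supplied.)
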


\begin{proof}
In this proof, let us denote by $\varpi=\omega\circ N$ and $\chi=\psi\circ N$, which are primitive cyclotomic characters over $K$ modulo $p$ and $p^2$ by Proposition \ref{cyclo}, respectively.
From Proposition \ref{partial:Lfunction:expression}, equation (\ref{partial:Lfunction:expression:equation}) and our assumption, we obtain that
\begin{equation}\label{limitwillbetaken2}
\sum_{\eta\in A}a_\eta L_K(s,\eta\varpi^j)=\frac{1}{p-1}\sum_{i=1}^{p-1}\sum_{\eta\in A}a_\eta L_K(s,\eta\chi^i\varpi^j)
\end{equation}
for any $j$.
If $p-1\nmid j$, then by the functional equation of Hecke $L$-functions,
\begin{align*}
&p^{ds} \sum_{\eta}a_\eta\tau(\eta\varpi^{j})L_K(1-s,\overline{\eta\varpi^j})
\\
&=\frac{1}{p-1}\sum_{i}\sum_{\eta}a_\eta\tau(\eta \chi^i\varpi^j)L_K(1-s,\overline{\eta \chi^i\varpi^j}).
\end{align*}
If $p-1\mid j$, then $\eta \varpi^j$ is imprimitive and $\eta$ is a primitive character which induces an imprimitive character $\eta \varpi^j$, so we obtain the following:
\begin{align}\label{limitwillbetaken}
\frac{p^{2ds}}{L_{p}(s,\eta )}\sum_\eta a_\eta\tau(\eta)L_K(1-s,\overline{\eta })=\frac{1}{p-1}\sum_{i=1}^{p-1}\sum_\eta a_\eta\tau(\eta \chi^i)L_K(1-s,\overline{\eta \chi^i}),
\end{align}
where $L_{p}(s,\eta)$ is the Euler factor of Hecke $L$ of $\eta$ dividing $p$:
$$
L_{p}(s,\eta)=\prod_{\mathfrak{p}|p}\Big(1-\frac{\eta(\mathfrak{p})}{N(\mathfrak{p})^s}\Big)^{-1}.
$$
Note that
$$
p^{2ds}L_p(s,\eta)^{-1}=o(1)\text{ and }L_K(1-s,*
)=1+o(1)\text{ as }s\rightarrow-\infty.
$$
By taking the limit $s\rightarrow-\infty$ to equation (\ref{limitwillbetaken2}), equation (\ref{limitwillbetaken}) and applying Lemma \ref{gausssum:decomposition}, we obtain that
\begin{align*}
0=\sum_{i=1}^{p-1}\sum_\eta a_\eta\tau(\eta \chi^i\varpi^j)=\sum_\eta a_\eta\tau(\eta)\eta_{\rm f}(p^2)\sum_{i=1}^{p-1}\tau(\chi^i\varpi^j)\chi^i\varpi^j(\mathfrak{m})
\end{align*}
for any $j$. Thus, by Proposition \ref{twisted:average:gausssum}, we obtain the following equation:
\begin{equation}\label{nonvanishing:averagesum:characters:equation}
\sum_{\eta\in A} a_\eta \tau(\eta)\eta_{\rm f}(p^2)=0.
\end{equation}
Let $q\nmid pN(\mathfrak{m})$ be an odd rational prime. Let $\omega_q$ be a non-trivial even primitive Dirichlet character modulo $q$. Let us denote by $\varpi_q=\omega_q\circ N$, which is an even primitive character over $K$ modulo $q$ by Proposition \ref{cyclo}. Note that 
$$
\sum_{\eta}a_\eta c_{\eta\varpi_q}(m)=\omega_q(m)\sum_{\eta}a_\eta c_{\eta}(m)=0.
$$
Thus, we can replace $A$ by $\{\eta\varpi_q\mid\eta\in A\}$ of equation 
(\ref{nonvanishing:averagesum:characters:equation}), obtaining that
$$
\tau(\varpi_q)\varpi_q(p^2\mathfrak{m})\sum_{\eta\in A}a_\eta\tau(\eta)\eta_{\rm f}(p^2q)=\sum_{\eta\in A}a_\eta\tau(\eta\varpi_q)(\eta\varpi_q)_{\rm f}(p^2)=0
$$
by Lemma \ref{gausssum:decomposition}. So we are done.
\end{proof}

From this, we can prove Lemma \ref{nonvanishing:primitive}, which plays a crucial role to show the non-vanishing (Proposition \ref{prim}) and cyclotomic field generation result (Proposition \ref{cyclotomic:generation}):
\begin{proof}[Proof of {\rm Lemma \ref{nonvanishing:primitive}}]
Assume the contrary, i.e., $c_\eta(m)=0$ for any $m$ with $\langle m\rangle\neq 1$, or equivalently, $L_K^\circ(s,\eta)=0$. Set $A=\{\eta\}$ in Proposition \ref{nonvanishing:averagesum:characters}, then we obtain $\tau(\eta)\eta_{\rm f}(p^2)=0$, which is a contradiction.
\end{proof}

Let us denote by $c_\eta^\circ$ the restriction of $c_\eta$ to the set of positive integers $m$ with $\langle m\rangle\neq 1$. 
Let us denote by $\eta_{\rm f}^\prime$ the restriction of $\eta_{\rm f}$ to the set of odd rational primes $q$ such that $q\nmid pN(\mathfrak{m})$.  
From Proposition \ref{nonvanishing:averagesum:characters}, we obtain the following field generation result:
\begin{thm}\label{nonvanishing:averagesum:characters:2}
Let us assume that $K$ is solvable and $p$ does not divide $N(\mathfrak{m})$. Then, 
$$
\mathbb{Q}(\eta_{\rm f}^\prime)\subset\mathbb{Q}(c_\eta^\circ).
$$
\end{thm}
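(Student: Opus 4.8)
The plan is to deduce the inclusion by a Galois-descent argument, treating Theorem \ref{nonvanishing:averagesum:characters:2} as essentially a corollary of the determination result in Proposition \ref{nonvanishing:averagesum:characters}. First I would observe that all the relevant quantities lie in a single finite abelian extension of $\mathbb{Q}$: we have $c_\eta(m)\in\mathbb{Z}[\eta]$, and each value $\eta_{\rm f}(q)$ is a root of unity lying in $\mathbb{Q}(\eta)$, so both $\mathbb{Q}(c_\eta^\circ)$ and $\mathbb{Q}(\eta_{\rm f}^\prime)$ are subfields of the finite Galois extension $\mathbb{Q}(\eta)/\mathbb{Q}$. By the Galois correspondence it then suffices to show that every $\sigma\in\Gal(\overline{\mathbb{Q}}/\mathbb{Q})$ fixing $\mathbb{Q}(c_\eta^\circ)$ pointwise also fixes $\eta_{\rm f}(q)$ for every odd prime $q\nmid pN(\mathfrak{m})$.

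Fix such a $\sigma$. Applying the reciprocity (\ref{reci}) to $c_\eta(m)=\sum_{N(\mathfrak{a})=m}\eta(\mathfrak{a})$ gives $c_\eta(m)^\sigma=c_{\eta^\sigma}(m)$; since $\sigma$ fixes each $c_\eta^\circ(m)$, this yields $c_{\eta^\sigma}^\circ=c_\eta^\circ$, i.e.
$$
\sum_{\zeta\in\{\eta,\eta^\sigma\}}a_\zeta\, c_\zeta(m)=0\quad\text{for all }m\text{ with }\langle m\rangle\neq 1,
$$
where $a_\eta=1$ and $a_{\eta^\sigma}=-1$. If $\eta^\sigma=\eta$ there is nothing to prove, so assume $\eta^\sigma\neq\eta$. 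Here $\eta^\sigma$ is again a totally odd primitive ray class character modulo $\mathfrak{m}$, since Galois conjugation acts on values only and hence preserves the conductor, the signature (so total oddness), and primitivity; thus $A=\{\eta,\eta^\sigma\}$ is admissible for Proposition \ref{nonvanishing:averagesum:characters}.

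Invoking Proposition \ref{nonvanishing:averagesum:characters} with this $A$ produces the two identities
$$
\tau(\eta)\eta_{\rm f}(p^2)=\tau(\eta^\sigma)\eta^\sigma_{\rm f}(p^2),\qquad \tau(\eta)\eta_{\rm f}(p^2q)=\tau(\eta^\sigma)\eta^\sigma_{\rm f}(p^2q)
$$
for every odd prime $q\nmid pN(\mathfrak{m})$. Using $p\nmid N(\mathfrak{m})$ and $q\nmid N(\mathfrak{m})$, both $p^2$ and $q$ are invertible modulo $\mathfrak{m}$, so $\eta_{\rm f}(p^2q)=\eta_{\rm f}(p^2)\eta_{\rm f}(q)$ and similarly for $\eta^\sigma$. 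Substituting the first identity into the second and cancelling the factor $\tau(\eta)\eta_{\rm f}(p^2)$, which is nonzero because $|\tau(\eta)|^2=N(\mathfrak{m})\neq 0$ by primitivity and $\eta_{\rm f}(p^2)$ is a root of unity, leaves $\eta_{\rm f}(q)=\eta^\sigma_{\rm f}(q)$.

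Finally I would identify $\eta^\sigma_{\rm f}(q)$ with $\eta_{\rm f}(q)^\sigma$. Since $q$ is a positive rational integer it is totally positive and coprime to $\mathfrak{m}$, so by the remark following (\ref{def:xif}) we have $\eta_{\rm f}(q)=\eta(qO_K)$, and likewise $\eta^\sigma_{\rm f}(q)=\eta^\sigma(qO_K)$ because $\eta^\sigma$ shares the signature of $\eta$. Hence $\eta^\sigma_{\rm f}(q)=\eta^\sigma(qO_K)=\eta(qO_K)^\sigma=\eta_{\rm f}(q)^\sigma$, and combined with the previous step this gives $\eta_{\rm f}(q)^\sigma=\eta_{\rm f}(q)$ for all admissible $q$, so $\sigma$ fixes $\mathbb{Q}(\eta_{\rm f}^\prime)$ and the descent is complete. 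The genuine content is carried entirely by Proposition \ref{nonvanishing:averagesum:characters}; the only points requiring care here are bookkeeping rather than conceptual, namely verifying that $\eta^\sigma$ remains a totally odd primitive character of modulus $\mathfrak{m}$ and that conjugation commutes with passing to the finite part on totally positive rational integers, both of which follow from (\ref{def:xif}) together with the invariance of the signature and conductor under $\sigma$.
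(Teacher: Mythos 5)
Your proposal is correct and follows essentially the same Galois-descent argument as the paper: pick $\sigma$ fixing $\mathbb{Q}(c_\eta^\circ)$, apply Proposition \ref{nonvanishing:averagesum:characters} to $A=\{\eta,\eta^\sigma\}$, and divide the two resulting identities to get $\eta_{\rm f}(q)=\eta^\sigma_{\rm f}(q)$. You merely make explicit some bookkeeping the paper leaves implicit (the case $\eta^\sigma=\eta$, the cancellation of $\tau(\eta)\eta_{\rm f}(p^2)\neq 0$, and the identification $\eta^\sigma_{\rm f}(q)=\eta_{\rm f}(q)^\sigma$ -- note the latter and $c_\eta(m)^\sigma=c_{\eta^\sigma}(m)$ follow directly from the definitions rather than from the reciprocity (\ref{reci}), which concerns $L$-values).
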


\begin{proof}
Let us choose $\sigma\in\mathrm{Gal}\big(\mathbb{Q}(c_\eta^\circ,\eta)/\mathbb{Q}(c_\eta^\circ)\big)$. Then, by Proposition
\ref{nonvanishing:averagesum:characters}, 
$$
\tau(\eta)\eta_{\rm f}(p^2)=\tau(\eta^\sigma)\eta_{\rm f}^\sigma(p^2)\text{ and }
\tau(\eta)\eta_{\rm f}(p^2q)=\tau(\eta^\sigma)\eta^\sigma_{\rm f}(p^2q)
$$ 
for odd rational primes $q\nmid pN(\mathfrak{m})$.
From this, we conclude that $\eta_{\rm f}(q)=\eta^\sigma_{\rm f}(q)$ for such primes $q$, which implies that $\sigma$ fixes $\mathbb{Q}(\eta_{\rm f}^\prime)$. Since $\sigma$ is arbitrary, we are done.
\end{proof}

\section{Main result}
Let us recall that $\eta$ is a totally odd primitive ray class character over $K$ of modulus $\mathfrak{m}$. 
Let us recall our main question:
\begin{conjecture*} For almost all totally even primitive cyclotomic characters $\chi$ over $K$ modulo a $p$-power, does the following equality hold?
$$
F\big(L_K(0,\eta\chi)\big)=F(\eta,\chi).
$$ 
\end{conjecture*}
We prove a result toward the main question:
\begin{thm}\label{main:theorem:final}
Let us assume that $K$ is solvable and $p$ does not divide $N(\mathfrak{m})$. Then,
$$
F(\eta_{\rm f}^\prime,\chi)\subset F(c_\eta^\prime,\chi)=F\big(L_K(0,\eta\chi)\big)\subset F(\eta,\chi)
$$
for almost all totally even primitive cyclotomic characters $\chi$ over $K$ modulo a $p$-power.
\end{thm}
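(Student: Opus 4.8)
The plan is to establish the displayed chain one link at a time, since each of the three relations is governed by a result already proved in the excerpt. Reading from left to right, I must show (i) $F(\eta_{\rm f}^\prime,\chi)\subset F(c_\eta^\prime,\chi)$, (ii) the equality $F(c_\eta^\prime,\chi)=F(L_K(0,\eta\chi))$, and (iii) $F(L_K(0,\eta\chi))\subset F(\eta,\chi)$. I expect (i) and (iii) to hold for every $\chi\in\Xi_{K,p}$ of modulus $p^n$ with $n$ large enough for the ambient definitions, so that the sole ``almost all'' restriction enters through (ii); hence the whole chain holds for almost all $\chi$ exactly when (ii) does.

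For the central equality (ii) I would simply invoke Theorem \ref{main:result}, which under the standing hypotheses that $K$ is solvable and $p\nmid N(\mathfrak{m})$ asserts precisely $F(L_K(0,\eta\chi))=F(c_\eta^\prime,\chi)$ for almost all $\chi$. For the rightmost inclusion (iii) I would appeal to algebraicity: by the reciprocity relation (\ref{reci}) the critical value $L_K(0,\eta\chi)$ is algebraic and lies in $\mathbb{Q}(\eta\chi)$, and since the values of the product character $\eta\chi$ are products of values of $\eta$ and of $\chi$ we have $\mathbb{Q}(\eta\chi)\subset\mathbb{Q}(\eta,\chi)\subset F(\eta,\chi)$. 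Adjoining $L_K(0,\eta\chi)$ to $F$ therefore produces a subfield of $F(\eta,\chi)$, which is (iii).

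The leftmost inclusion (i) is where the determination theory is consumed. Here I would first observe that the index set defining $c_\eta^\circ$, namely $\{m>0:\langle m\rangle\neq 1\}$, is contained in the index set defining $c_\eta^\prime$, namely $\{m>0:p\nmid m\}$, because $\langle\cdot\rangle$ is defined on $(\mathbb{Z}/p^2\mathbb{Z})^\times$ and so $\langle m\rangle\neq 1$ already forces $p\nmid m$. Consequently $c_\eta^\circ$ is a restriction of $c_\eta^\prime$, whence $\mathbb{Q}(c_\eta^\circ)\subset\mathbb{Q}(c_\eta^\prime)$. Combining this with Theorem \ref{nonvanishing:averagesum:characters:2}, which gives $\mathbb{Q}(\eta_{\rm f}^\prime)\subset\mathbb{Q}(c_\eta^\circ)$, yields $\mathbb{Q}(\eta_{\rm f}^\prime)\subset\mathbb{Q}(c_\eta^\prime)$. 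Forming the compositum with $F(\chi)$ on both sides, which is monotone, upgrades this to $F(\eta_{\rm f}^\prime,\chi)\subset F(c_\eta^\prime,\chi)$, establishing (i).

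The deep content of the theorem is therefore entirely upstream: the solvability hypothesis and the $p$-indivisibility of $N(\mathfrak{m})$ are spent in Proposition \ref{twisted:average:gausssum} (through the Gauss sum decomposition of Proposition \ref{gausssum:decomposition:2} and the Kloosterman non-vanishing of Lemma \ref{kloosterman:nonvanishing:lemma}) and in the duality and analytic argument behind Proposition \ref{nonvanishing:averagesum:characters} and Theorem \ref{nonvanishing:averagesum:characters:2}. The only genuine care required at this final stage is bookkeeping, which I take to be the main (though modest) obstacle: correctly matching the two distinct restrictions $c_\eta^\circ$ and $c_\eta^\prime$ as above, and confirming that the ``almost all'' clause of Theorem \ref{main:result} propagates through the compositum operations. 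The latter causes no trouble, since passing from a $\mathbb{Q}$-field inclusion to the corresponding $F(\chi)$-compositum inclusion is monotone and introduces no new exceptional characters.
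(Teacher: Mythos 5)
Your proposal is correct and follows essentially the same route as the paper, whose proof is a one-line assembly of the same ingredients: Theorem \ref{main:result} for the central equality, Theorem \ref{nonvanishing:averagesum:characters:2} together with the containment $\mathbb{Q}(c_\eta^\circ)\subset\mathbb{Q}(c_\eta^\prime)$ for the left inclusion, and the trivial observation that the remaining objects live in $F(\eta,\chi)$ for the right inclusion. Your only (harmless) divergence is justifying the rightmost inclusion directly via algebraicity $L_K(0,\eta\chi)\in\mathbb{Q}(\eta\chi)$ from (\ref{reci}), whereas the paper gets it implicitly from the equality $F\big(L_K(0,\eta\chi)\big)=F(c_\eta^\prime,\chi)$ and the fact that $c_\eta$ takes values in $\mathbb{Z}[\eta]$.
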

\begin{proof}
Since $\mathbb{Q}(c_f^\circ)\subset\mathbb{Q}(c_f^\prime)$, it is immediate from Theorem \ref{main:result} and Theorem \ref{nonvanishing:averagesum:characters:2}.
\end{proof}

\begin{remark}
Our assumption on the sign of characters $\eta$ and $\chi$ is important as $\eta\chi$ is not totally odd if and only if $L(s,\eta\chi)=0$. Note that $\eta\chi$ is totally odd if and only if either $\eta$ is totally odd and $\chi$ is totally even or $\eta$ is totally even and $\chi$ is totally odd, since $\chi$ arises from a Dirichlet character. Obviously, one can repeat this paper for totally even tame character $\eta$ and totally odd cyclotomic character $\chi$ without any obstruction, which means that the present paper covers all the possible cases on $\eta$ and $\chi$.
\end{remark}

\section*{Acknowledgements}
\thispagestyle{empty}
The first named author was supported by the Basic Science Research program and the Global-LAMP program of the National Research Foundation of Korea (NRF) grant funded by the Ministry of Education (No. RS-2023-00245291 and RS-2023-00301976). We would like to thank Peter Cho, Chan-Ho Kim, Dohyeong Kim, Henry Kim, and Yoonbok Lee for helpful comments on the joint universality and Langlands functoriality.


\begin{thebibliography}{99}

\bibitem{artin} P. J. Cho and H. H. Kim, {\it Universality of Artin $L$-functions in conductor aspect}, J. Math. Anal. Appl. 456 (2017), 34--56

\bibitem{basechange} J. Arthur and L. Clozel, {\it Simple algebras, base change, and the advanced theory of the trace formula}, Ann. of
Math. Stud. 120, Princeton University Press, Princeton 1989.






\bibitem{Gurak} S. Gurak, {\it Explicit values of multi-dimensional Kloosterman sums for prime powers, II}, Math. Comp. 77. (2008), no. 261, 475--493.

 
\bibitem{JLS} B. Jun, J. Lee, H.-S. Sun, {\it Cyclotomic fields are generated by cyclotomic Hecke $L$-values of totally real fields}, Adv. Math. 401 (2022), 108327.

\bibitem{LNP} Y. Lee, T. Nakamura, Ł. Pa{\'n}kowski, {\it Selberg’s orthonormality conjecture and joint universality of $L$-functions}, Math. Z. 286 (2017), 1--18.

\bibitem{luo1997determination} W. Luo and D. Ramakrishnan, {\it Determination of modular forms by twists of critical $L$-values}, Invent. Math. 130 (1997), no. 2, 371--398.


\bibitem{Neu} J. Neukirich, {\it Algebraic Number Theory}, Grundlehren der Mathematischen Wissenschaften, 322. Springer-Verlag, Berlin, 1999. 











\bibitem{sun} H.-S. Sun, {\it Generation of cyclotomic Hecke fields by modular $L$-values with cyclotomic twists}, Amer. J. Math. 141 (2019), no. 4, 907--940.




\end{thebibliography}
\end{document}